\newtheorem{theorem}{Theorem}[section]
\newtheorem{lemma}[theorem]{Lemma}
\newtheorem{prop}[theorem]{Proposition}
\newtheorem{cor}[theorem]{Corollary}
\theoremstyle{definition}
\newtheorem{definition}[theorem]{Definition}
\theoremstyle{remark}
\newtheorem{remark}[theorem]{Remark}
\numberwithin{equation}{section}
\newcommand\B{\mathbb{B}}
\newcommand\C{\mathbb{C}}
\newcommand\R{\mathbb{R}}
\newcommand\cA{\mathcal{A}}
\newcommand\cG{\mathcal{G}}
\newcommand\cK{\mathcal{K}}
\newcommand\cJ{\mathcal{J}}
\newcommand\cS{\mathcal{S}}
\newcommand\cU{\mathcal{U}}
\newcommand\RE{\mathrm{Re}\,}
\newcommand\IM{\mathrm{Im}\,}
\newcommand\EXP{\mathrm{Exp}\,}
\newcommand\Ran{\mathrm{Ran}\,}
\newcommand\Ker{\mathrm{Ker}\,}
\newcommand\spa{\mathrm{span}\,}
\newcommand\RAU{R\cA\cU}
\newcommand\IAU{I\cA\cU}
\newcommand\inpr[2]{\langle{#1,#2}\rangle}
\begin{document}
\title{Every sum system is divisible}

\author{MASAKI IZUMI}
\address{Department of Mathematics\\ 
Kyoto University, Kyoto, Japan.}
\email{izumi@math.kyoto-u.ac.jp}

\subjclass[2000]{46L55, 47D03, 81S05}

\keywords{$E_0$-semigroups, product system, type I, type III,}

\thanks{Work supported by JSPS}
\begin{abstract} 
We show that every sum system is divisible. 
Combined with B. V. R. Bhat and R. Srinivasan's result, this shows that 
every product system arising from a sum system (and every generalized CCR flow) is either of type I or type III. 
A necessarily and sufficient condition for such a product system to be of type I is obtained.  
\end{abstract}

\maketitle

\section{Introduction}\label{intro} 
An $E_0$-semigroup is a weakly continuous semigroup of unital $*$-endomorphisms on $\B(H)$, 
the algebra of all bounded operators on a separable infinite dimensional Hilbert space $H$. 
W. Arveson \cite{Arv1} introduced the notation of a product system, 
(a continuous tensor \textit{product system} of Hilbert spaces), and showed that 
the product system associated with an $E_0$-semigroup completely determines the cocycle conjugacy class 
of the $E_0$-semigroup. 
On the other hand, Arveson \cite{Arv2} also showed that every product system arises from an 
$E_0$-semigroup (see also \cite{L}, \cite{Sk}). 

$E_0$-semigroups (and hence product systems) are classified into three categories, type I, II, and III. 
Using a quasi-free representation of the CAR (canonical anticommutation relation) algebras, 
R. Powers \cite{Po1} constructed the first example of an $E_0$-semigroup of type III. 
It is technically very difficult to construct such an example, and one had to wait for more than 10 years 
before B. Tsirelson \cite{T1} produced uncountably many mutually non-isomorphic product systems of type III. 
His construction uses continuous sums of Hilbert spaces coming from off white noises. 

Recently, a few attempts \cite{BS}, \cite{I}, \cite{IS} were  made to understand Tsirelson's construction from the view point of 
functional analysis. 
Bhat and Srinivasan \cite{BS} introduced the notion of a sum system, which is an axiomatization 
of Tsirelson's continuous sum of Hilbert spaces. 
A sum system gives rise to a product system via the second quantization procedure using the CCR 
(canonical commutation relation). 
Among the others, a dichotomy result about types was proved in \cite{BS}, which says that 
every product system arising from a divisible sum system is either of type I or of type III. 
A sum system is said to be divisible if it has sufficiently many real and imaginary addits 
(called additive units in \cite{BS}). 
While Bhat and Srinivasan \cite{BS} dealt with product systems, the present author \cite{I} directly gave a description of Tsirelson's 
$E_0$-semigroups in terms of perturbations of the shift semigroup of $L^2(0,\infty)$ and the CCR algebras. 

As a consequence of the two approaches \cite{BS} and \cite{I}, a class of $E_0$-semigroup, called generalized CCR flows, was 
introduced in \cite{IS}. 
A generalized CCR flow is constructed from a pair of $C_0$-semigroups acting on a real Hilbert space on one hand, and on one hand, 
the product system associated with it arises from a sum systems. 
In \cite{IS}, we constructed continuously many mutually non-cocycle conjugate generalized CCR flows of type III different from 
Tsirelson's examples 

The main purpose of this paper is to show that every sum system is divisible. 
Combined with the above mentioned dichotomy result, this implies that every product system arising from a sum system 
(and every generalized CCR flow) is either of type I or type III. 
In our proof, we carefully analyze the domains of the generators of the two $C_0$-semigroups giving the generalized CCR flows, 
whose product system arises from the given sum system. 
In \cite{IS}, a type I criterion was given for a divisible sum system of finite index. 
Our main theorem shows that the spaces of real and imaginary addits have nice subspaces with manageable topologies, which 
enables us to prove the type I criterion in full generality. 

The author would like to thank R. Srinivasan for useful discussions. 
\section{Preliminaries and notation}\label{pre}
In this section we fix the notation used in this paper. 
For an operator $A$, we denote the range of $A$ by $\Ran(A)$, and the kernel of $A$ by $\Ker(A)$.  
We denote the identity operator on a Hilbert space $H$ by $I_H$ (or simply by $I$ if no confusion arises).  
Our inner product is linear in the first variable. 

\subsection{$E_0$-semigroups and product systems}
For $E_0$-semigroups, our basic reference is \cite{Arv3}. 

\begin{definition}\label{productsystem}
A product system of Hilbert spaces is an one-parameter family
of separable complex Hilbert spaces $\{H_t;t \geq 0 \}$, together with unitary operators 
$U_{s,t} : H_s \otimes H_t \rightarrow H_{s+t}$ for  $s, t \in (0,\infty)$, 
satisfying the following two axioms: 
\begin{itemize}
\item [(1)] (Associativity) For any $s_1, s_2, s_3 \in (0,\infty)$
$$U_{s_1, s_2 + s_3}( I_{H_{s_1}} \otimes U_{s_2 ,
s_3})=  U_{s_1+ s_2 , s_3}( U_{s_1 ,
s_2} \otimes I_{H_{s_3}}).$$

\item[(2)] (Measurability) There exists a countable set $H^0$ of
sections $\R\ni t \rightarrow h_t \in H_t$ such that $ t  \mapsto 
\langle h_t, h_t^\prime\rangle$ is measurable for any two $h, h^\prime \in H^0$, and the set $\{h_t; h \in H^0\}$ is total 
in $H_t$, for each $ t \in (0,\infty)$. 
Further it is also assumed that the map $(s,t) \mapsto \langle U_{s,t}(h_s \otimes h_t), h^\prime_{s+t} \rangle$ is measurable 
for any two $h , h^\prime \in H^0$.
\end{itemize}

Two product systems $(\{H_t\}, \{U_{s,t}\})$ and $(\{H^\prime_t\}, \{U^\prime_{s,t}\})$
are said to be isomorphic if there exists a unitary operator $V_t:H_t \rightarrow H_t^\prime$, 
for each $t \in (0,\infty)$ satisfying $$V_{s+t}U_{s,t}= U_{s,t}^\prime (V_s \otimes V_t).$$
\end{definition}

The above definition is slightly different from Arveson's original one though the two definitions are equivalent, of course. 
Note that we need not consider measurable structures while dealing with isomorphisms of product systems (see \cite{L}). 

For an $E_0$-semigroup $\{\alpha_t\}_{t \geq 0}$, let $H_t$ be the space of intertwiners between the identity of $H$ and $\alpha_t$, that is,  
$$H_t =\{T \in \B(H); \; \alpha_t(X) T = TX, \; \forall X \in\B(H)\}.$$ 
Then $H_t$ is a Hilbert space with inner product $\inpr{S}{T} I_H= T^*S$. 
Identifying $x_1\otimes x_2$ with $x_1x_2$ for $x_1\in H_t$ and $x_2\in H_s$, one can see that the family $\{H_t\}_t$ satisfies 
the axiom of a product system. 

\begin{definition}\label{unit}
A unit for a product system is a non-zero section $\{u_t;t \geq 0 \}$, such that the map $t \mapsto \langle u_t, h_t\rangle$ is
measurable for any $h \in H^0$ and 
$$U_{s,t}(u_s \otimes u_t)= u_{s+t}, ~  \forall s,t \in (0,\infty).$$
\end{definition}

A product system (an $E_0$-semigroup) is said to be of type I, if units exists for the product system and 
they generate the product system  (see \cite{Arv1}, \cite{Arv3} for the precise meaning). 
It is of type II if units exist but they do not generate the product system. 
It is of type III (or unitless) if there is no unit. 

\subsection{Symmetric Fock space}
For a complex Hilbert space $K$, we denote by $\Gamma(K)$ the symmetric Fock space associated with $K$, 
and by $\Phi$ the vacuum vector in $\Gamma(K)$ (see \cite{Pa}). 
For any $x \in K$, the exponential vector of $x$ is defined by $$e(x)= \oplus_{n=0}^\infty  
\frac{x^{\otimes^n}}{\sqrt{n!}},$$ where $x^{\otimes^0}=\Phi$.  
Then the set of all exponential vectors 
$\{e(x): x \in K\}$ is a linearly independent total set in $\Gamma(K)$. 
For a unitary $U\in \B(K)$, we denote by $\EXP(U)$ the unitary in $\B(\Gamma(K))$ 
given by $\EXP(U)e(x)=e(Ux)$. 
The Weyl operator $W(x)$ for $x \in K$ is a unitary operator of $\Gamma(K)$ determined by 
$$W(x)(e(y))= e^{-\frac{\|x\|^2}{2}- \inpr{y}{x}} e(y+x).$$ 
The Weyl operators satisfy the relation $W(x)W(y)=e^{i\IM \inpr{x}{y}}W(x+y)$. 
The $*$-algebra generated by $\{W(x)\}_{x\in K}$ is called the Weyl algebra for $K$. 

For a real Hilbert space $G$, we denote  the complexification of $G$ by $G^{\C}$. 
For two Hilbert spaces $G_1$, $G_2$, we denote by $\cS(G_1,G_2)$ the set of bounded invertible operators $A$ in $\B(G_1, G_2)$ 
such that $I-A^*A$ is a Hilbert-Schmidt operator.  
An operator in $\cS(G_I,G_2)$ is called an equivalence operator. 
In the above definition and elsewhere, by invertibility we mean that the inverse is also bounded. 

For two real Hilbert spaces $G_I, G_2$ and
$A \in \cS(G_1,G_2 )$, define
a real liner operator $S_A:G_1^{\C} \rightarrow
G_2^{\C}$ by
$S_A(u+iv)=Au+i(A^{-1})^*v$ for $u,v \in
G_1$. Then $S_A$ is a symplectic
isomorphism between $G_1^\C$ and
$G_2^\C$
(i.e. $S_A$ is a real linear, bounded,  invertible map satisfying $\IM(\langle S_Ax,S_Ay\rangle
)=\IM\langle x,y\rangle $ for all $x,y
\in G_1^\C$, see \cite[page 162]{Pa}). In general $S_A$ is not complex linear, unless $A$ is unitary.

The following theorem, a generalization of Shales theorem, is used to 
construct the product system from a sum system in \cite{BS}.

\begin{theorem}\label{Shale}
Let $G_1$ and $G_2$ be real Hilbert spaces and $A \in \cS(G_1,G_2)$. 
Then there exists a unique unitary operator $\Gamma(A):\Gamma(G_1^\C) \rightarrow \Gamma(G_2^\C)$ satisfying
$$\Gamma(A)W(z){\Gamma(A)}^* =W(S_Az),\quad  \forall z\in G_1^{\C},$$
with the normalization condition $\inpr{\Gamma(A) \Phi_1}{\Phi_2}\in \R^{+}$, where $\Phi_1$ and $\Phi_2$ are 
the vacuum vectors in $\Gamma(G_1^\C)$ and $\Gamma(G_2^\C)$ respectively. 
If moreover $G_3$ is a real Hilbert space and $B \in \cS(G_2, G_3)$, then
$\Gamma(BA)=\Gamma(B)\Gamma(A)$ holds. 
\end{theorem}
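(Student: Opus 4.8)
The plan is to establish the existence and uniqueness of $\Gamma(A)$ first, and then deduce the functoriality $\Gamma(BA)=\Gamma(B)\Gamma(A)$ as a consequence of the uniqueness. The starting point is the symplectic isomorphism $S_A\colon G_1^\C\to G_2^\C$ described just above the statement: since $S_A$ preserves imaginary parts of inner products, the assignment $W(z)\mapsto W(S_Az)$ is a $*$-isomorphism of the Weyl algebra for $G_1^\C$ onto the Weyl algebra for $G_2^\C$ (both represented on their respective symmetric Fock spaces via the Fock vacuum representation). The content of the theorem is that, because $A$ is an equivalence operator (i.e.\ $I-A^*A$ is Hilbert--Schmidt), this $*$-isomorphism is spatially implemented by a \emph{unitary} between the two Fock spaces — this is exactly the generalized Shale criterion, and I would either invoke the classical statement (Shale's theorem on quasi-free automorphisms, as in \cite{Pa}) or, if a self-contained argument is wanted, reduce to it.

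The reduction I have in mind: write $S_A = V|S_A|$-type decompositions, or more directly use that $S_A = S_U\circ S_P$ where $U$ is the unitary part and $P$ is a positive equivalence operator obtained from the polar-type decomposition of $A$; for the unitary part one takes $\Gamma(A):=\EXP(U^\C)$ as already supplied by the paper's notation $\EXP(\cdot)$, which manifestly satisfies $\EXP(U^\C)W(z)\EXP(U^\C)^*=W(U^\C z)$ and fixes the vacuum, so the normalization $\inpr{\Gamma(A)\Phi_1}{\Phi_2}\in\R^+$ holds trivially there; for the positive part one uses the Hilbert--Schmidt condition to write the implementing operator as a convergent infinite product of one-mode squeezing operators (Bogoliubov transformations in a single degree of freedom), the Hilbert--Schmidt summability being precisely what guarantees the product converges to a unitary rather than only a densely defined operator. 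The normalization $\inpr{\Gamma(A)\Phi_1}{\Phi_2}\in\R^+$ is then achieved by multiplying by a unimodular scalar, which is legitimate since the intertwining relation $\Gamma(A)W(z)\Gamma(A)^*=W(S_Az)$ is insensitive to a global phase.

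For uniqueness: if $\Gamma(A)$ and $\Gamma(A)'$ both intertwine and satisfy the normalization, then $\Gamma(A)'^*\Gamma(A)$ commutes with every $W(z)$ for $z\in G_1^\C$; since the Weyl algebra for $G_1^\C$ acts irreducibly on $\Gamma(G_1^\C)$ (the exponential vectors are total and the vacuum is cyclic), this commutant is $\C I$, so $\Gamma(A)'^*\Gamma(A)=\lambda I$ with $|\lambda|=1$, and the two normalization conditions force $\lambda>0$, hence $\lambda=1$. Finally, for $\Gamma(BA)=\Gamma(B)\Gamma(A)$: one checks $S_{BA}=S_B S_A$ directly from the formula $S_A(u+iv)=Au+i(A^{-1})^*v$ (here using $(B A)^{-1})^* = (B^{-1})^*(A^{-1})^*$), so $\Gamma(B)\Gamma(A)$ intertwines $W(z)$ with $W(S_{BA}z)$; it only remains to observe that $\inpr{\Gamma(B)\Gamma(A)\Phi_1}{\Phi_3}$ has the correct sign, which follows because $\Gamma(B)\Gamma(A)$ differs from the normalized $\Gamma(BA)$ by a phase $\lambda$, and computing $\inpr{\Gamma(B)\Gamma(A)\Phi_1}{\Phi_3}=\lambda\inpr{\Gamma(BA)\Phi_1}{\Phi_3}$ together with a positivity argument (the overlap of vacua through a product of squeezings is a product of positive Gaussian factors) pins down $\lambda=1$; then uniqueness gives the identity. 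The main obstacle is the analytic core — verifying that the Hilbert--Schmidt condition genuinely yields a bounded (indeed unitary) implementer and controlling the convergence of the relevant infinite product or Gaussian integral — but this is precisely the classical Shale estimate, so in practice I would cite it rather than reprove it, and the genuinely new-looking parts (the normalization bookkeeping and the cocycle identity) are then short.
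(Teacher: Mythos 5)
The paper offers no proof of this statement: Theorem~\ref{Shale} is quoted as background, attributed to Bhat--Srinivasan \cite{BS} (and ultimately to Shale's criterion, cf.\ \cite{Pa}), and the only thing the paper adds after the statement is the remark that $\Gamma(A^{-1})=\Gamma(A)^*$ follows formally. So there is no internal argument to compare yours against, and your sketch is best judged on its own.

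On its own terms, the existence and uniqueness parts of your sketch are sound and standard. The Hilbert--Schmidt hypothesis on $I-A^*A$ does yield the Shale condition for $S_A$, because the conjugate-linear part of $S_A$ is $\tfrac{1}{2}\bigl(A-(A^{-1})^*\bigr)=\tfrac{1}{2}(A^{-1})^*(A^*A-I)$, hence Hilbert--Schmidt; the polar reduction works since $S_{UP}=S_U S_P$; and uniqueness up to a unimodular scalar follows from irreducibility of the Fock--Weyl representation. The genuinely thin spot is the functoriality $\Gamma(BA)=\Gamma(B)\Gamma(A)$. You correctly reduce it to showing $\inpr{\Gamma(B)\Gamma(A)\Phi_1}{\Phi_3}>0$ and then invoke ``a product of positive Gaussian factors,'' but this is precisely where the content lies. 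For \emph{general} symplectic maps the Shale implementers form a genuinely projective (metaplectic) representation with a nontrivial sign cocycle, so positivity of the vacuum overlap is not automatic; the theorem holds for the subclass $\{S_A: A\in\cS(G_1,G_2)\}$ because these maps respect the real--imaginary decomposition, and as a result the positive factor in the polar decomposition implements a squeezing with a \emph{real} symmetric kernel of operator norm $<1$, so the relevant Fredholm determinant in the Gaussian overlap formula is a product of strictly positive terms. Making this rigorous requires (i) the explicit Gaussian form $\Gamma(P)\Phi = c\,e^{\text{(quadratic in }a^*)}\Phi$ with $c>0$ for $P$ positive, (ii) the determinant formula for the overlap of two such Gaussians and the choice of square-root branch, (iii) positivity of that determinant from the norm bound, and (iv) some care combining the unitary and positive factors of $A$, $B$, and $BA$, which do not commute. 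None of this is deep, but ``positive Gaussian factors'' on its own does not distinguish this situation from the metaplectic one where the phase really is nontrivial, so as written there is a gap precisely at the step your plan calls the ``genuinely new-looking'' one.
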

Note that $\Gamma(A^{-1})={\Gamma(A)}^*$ automatically holds from the above statement. 

The next lemma follows from the proof of \cite[Lemma 4.6]{IS}: 

\begin{lemma}\label{bdd}
Let $G_1$ and $G_2$ be real Hilbert spaces and let $R_0$ be a real linear operator on from $G_1^\C$ to $G_2^\C$ with dense domain $D(R_0)$, 
which preserves imaginary parts of the inner product. 
Suppose that there exists a unitary operator $U \in \B(\Gamma(G_1^\C),\Gamma(G_2^\C))$ satisfying 
\begin{eqnarray}\label{intertwine} UW(x)U^*& = & W(R_0x),~~\forall~x \in D(R_0).\end{eqnarray} 
Then $R_0$ extends to a bounded invertible operator $R$ from $G_1^\C$ onto $G_2^\C$. 
Further it is true that $R \in \cS(G_1 \oplus G_1, G_2 \oplus G_2)$, where we identify $G_i\oplus G_i$ with $G_i^\C$ equipped with 
the real inner product $\inpr{\cdot}{\cdot}_\R=\RE\inpr{\cdot}{\cdot}$.  
\end{lemma}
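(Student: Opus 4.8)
The plan is to derive from \eqref{intertwine} its scalar consequence obtained by testing against vacuum vectors, use that to get boundedness and then (by soft arguments) invertibility of the extension, and finally obtain the Hilbert--Schmidt condition by differentiating \eqref{intertwine} into a Bogoliubov relation. First I would pair \eqref{intertwine} with the vacuum: setting $\eta=U^*\Phi_2$, a unit vector of $\Gamma(G_1^\C)$, and using $\inpr{W(z)\Phi}{\Phi}=e^{-\|z\|^2/2}$, one gets
\begin{equation*}
\inpr{W(x)\eta}{\eta}=e^{-\|R_0x\|^2/2},\qquad x\in D(R_0).
\end{equation*}
If $R_0$ were unbounded, then (rescaling a sequence witnessing this, and using that $D(R_0)$ is a real subspace) we can choose $x_n\in D(R_0)$ with $\|x_n\|\to0$ and $\|R_0x_n\|\to\infty$; the right-hand side tends to $0$, while $W(x_n)\to I$ strongly (strong continuity of the Weyl operators) forces the left-hand side to tend to $\|\eta\|^2=1$, a contradiction. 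Hence $R_0$ extends to a bounded $R\colon G_1^\C\to G_2^\C$, and by density of $D(R_0)$ together with strong continuity of the Weyl operators, $UW(x)U^*=W(Rx)$ and $\IM\inpr{Rx}{Ry}=\IM\inpr{x}{y}$ for all $x,y\in G_1^\C$.

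For invertibility: writing $J_i$ for the complex structure of $G_i^\C$ and $R^*$ for the real adjoint, preservation of imaginary parts reads $R^*J_2R=J_1$, so $J_1^{-1}R^*J_2$ is a bounded left inverse of $R$ and $R$ is injective with closed range. Surjectivity I would get from irreducibility: $\{W(Rx):x\in G_1^\C\}=U\{W(x):x\in G_1^\C\}U^*$ generates $\B(\Gamma(G_2^\C))$, so if $\Ran(R)$ were a proper closed real subspace $M$, its real orthogonal complement would be nonzero and, applying $J_2^{-1}$, would contain some $w\neq0$ with $\IM\inpr{w}{v}=0$ for all $v\in M$; by the Weyl relations $W(w)$ then commutes with every $W(v)$, $v\in M$, so $W(w)\in\{W(Rx):x\}'=\C I$, which is absurd. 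Thus $R$ is a bounded bijection, hence boundedly invertible by the open mapping theorem, and $U^*W(w)U=W(R^{-1}w)$ for every $w\in G_2^\C$.

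The delicate remaining point is that $I-R^*R$ is Hilbert--Schmidt. Here I would differentiate the one-parameter unitary groups $t\mapsto U^*W(tw)U=W(tR^{-1}w)$ to obtain the Bogoliubov relation
\begin{equation*}
U^*a^*(w)U=a^*\big((R^{-1})_+w\big)-a\big((R^{-1})_-w\big),\qquad w\in G_2^\C,
\end{equation*}
where $(R^{-1})_\pm$ are the complex-linear and conjugate-linear parts of $R^{-1}$ and $a^*(\cdot),a(\cdot)$ are creation/annihilation operators. The symplectic relations for $R^{-1}$ give $(R^{-1})_+^*(R^{-1})_+=I+(R^{-1})_-^*(R^{-1})_-$ and $(R^{-1})_+(R^{-1})_+^*=I+(R^{-1})_-(R^{-1})_-^*$, both $\geq I$, so $(R^{-1})_+$ is boundedly invertible. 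Taking adjoints and applying the relation to $\eta=U^*\Phi_2$, where $a(w)\Phi_2=0$, gives $\big(a((R^{-1})_+w)-a^*((R^{-1})_-w)\big)\eta=0$ for all $w$. Splitting $\eta=\bigoplus_{n\geq0}\eta_n$ along the particle-number grading, the degree-one component reads $a((R^{-1})_+w)\eta_2=a^*((R^{-1})_-w)\eta_0$; surjectivity of $(R^{-1})_+$ and $\|\eta\|=1$ force $\eta_0=c\Phi_1$ with $c\neq0$, whence $(R^{-1})_-=c^{-1}\,T_{\eta_2}\circ(R^{-1})_+$, where $T_{\eta_2}\colon g\mapsto a(g)\eta_2$. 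Since $\eta_2$ lies in the two-particle subspace, $T_{\eta_2}$ is Hilbert--Schmidt --- indeed $\sum_j\|a(f_j)\eta_2\|^2=\inpr{N\eta_2}{\eta_2}=2\|\eta_2\|^2$ for any orthonormal basis $(f_j)$, $N$ the number operator --- so $(R^{-1})_-$, a composite of a Hilbert--Schmidt operator and a bounded one, is Hilbert--Schmidt. Combined with the symplectic identity this yields $I-(R^{-1})^*(R^{-1})\in$ Hilbert--Schmidt, i.e. $R^{-1}\in\cS(G_2\oplus G_2,G_1\oplus G_1)$, hence $R\in\cS(G_1\oplus G_1,G_2\oplus G_2)$.

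The main obstacle is precisely this last step: squeezing a Hilbert--Schmidt bound out of the bare existence of the implementer $U$, which is the substantive content of the hard half of Shale's theorem; what makes it go is that $\eta=U^*\Phi_2$ is a genuine Fock vector, pinning its two-particle component inside the two-particle subspace, everything else being soft. Some care is also needed with operator domains when differentiating the Weyl groups and when splitting the unbounded Bogoliubov identity across particle numbers, but this is routine. This is essentially the argument in the proof of \cite[Lemma~4.6]{IS}.
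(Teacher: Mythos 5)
The paper does not prove this lemma itself; it points to the proof of Lemma~4.6 in \cite{IS}, and your argument is essentially that one: boundedness from the vacuum overlap $\inpr{W(x)U^*\Phi_2}{U^*\Phi_2}=e^{-\|R_0x\|^2/2}$, invertibility from the symplectic identity $R^*J_2R=J_1$ together with irreducibility of the Fock representation, and the Hilbert--Schmidt estimate by extracting the conjugate-linear part $(R^{-1})_-$ from the two-particle component of $U^*\Phi_2$. The proposal is correct and takes the same route as the cited proof.
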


\subsection{Sum systems} 
Next we define the notion of a sum system. 

\begin{definition}\label{sumsystem} 
A sum system is a two-parameter family $\{G_{s,t}\}_{0 \leq s < t \leq \infty}$ of closed subspaces of a real Hilbert space 
$G_{0,\infty}$ satisfying the inclusion relations $G_{s,t} \subset G_{s^{\prime}, t^{\prime}}$ for all 
$(s,t)\subset (s^{\prime}, t^{\prime})$ together with a $C_0$-semigroup $\{S_t\}_{t\geq 0}$ acting on $G_{0, \infty}$ such that 
the following hold for any 
$s\in (0,\infty)$ and $t\in (0,\infty]$ with $s<t$: 
\begin{itemize}
\item [(1)] The restriction of $S_s$ to $G_{0,t}$ is in $\cS(G_{0,t}, G_{s, s+t})$. 
\item [(2)] If $A_{s,t}:G_{0,s}\oplus G_{s,s+t}\mapsto G_{0,s+t}$, is  the map
$A_{s,t}(x\oplus y)=x+y$, for $x \in G_{0,s}$ and $y \in G_{s,s+t}$, then 
$A_{s,t} \in \cS(G_{0,s}\oplus G_{s,s+t}, G_{0,s+t})$.  
\end{itemize}
We say that two sum systems $(\{G_{a,b}\},\{S_t\})$ and $(\{G'_{a,b}\},\{S'_t\})$ are isomorphic 
if there exists a family of equivalence operators $U_t\in \cS(G_{0,t},G'_{0,t})$ preserving 
every structure of the sum systems, i.e, 
$\{U_t\}$ satisfies $$ A^\prime_{s,t}(U_s \oplus S^\prime_s|_{G_{0,t}^\prime} U_t) = 
U_{s+t}A_{s,t}(I_{G_{0,s}}\oplus S_s|_{G_{0,t}}),$$ where $A_{s,t}^\prime$ is defined  for $(\{G'_{a,b}\},\{S'_t\})$ 
similar to $A_{s,t}$ above.
\end{definition}

The above definition, adopted in \cite{IS}, is slightly stronger than the one given in \cite{BS}. 

Given a sum system $(\{G_{s,t}\},\{S_t\})$, we define Hilbert spaces $H_t=\Gamma(G_{0,t}^\C)$, and  unitary operators
$U_{s,t}:H_s\otimes H_t \mapsto H_{s+t},$ by
$U_{s,t}=\Gamma(A_{s,t})(I_{H_s}\otimes \Gamma(S_s|_{G_{0,t}})).$ It is
proved in \cite{BS} that $(\{H_t\}, \{U_{s,t}\})$ forms a product system. 
Isomorphic sum systems give rise to isomorphic product systems.

\subsection{GCCR flows}
Let $G$ be a real Hilbert space, and let $\{S_t\}$ be the shift semigroup of $L^2((0,\infty),G)$ (and of $L^2((0,\infty),G^\C)$) 
defined by 
$$(S_tf)(x)=\left\{
\begin{array}{ll}0& (x<t)\\
f(x-t) &(t\leq x)
\end{array}
\right. ,$$
for $f \in L^2((0,\infty),G).$ 
The CCR flow of index $\dim G$ is an $E_0$-semigroup $\alpha$ acting on $\B(\Gamma(L^2((0,\infty),G^\C)))$, which is 
determined by $\alpha_t(W(f))=W(S_tf)$ for $f\in L^2((0,\infty),G^\C)$. 
The corresponding product system is the exponential product system of index $\dim G$ (see \cite{Arv2}), 
which is the simplest example of a product system arising from a sum system.   
Namely, let $G_{s,t}=L^2((s,t),G)$. 
Then the sum system $(\{G_{s,t}\},\{S_t\})$, which we call the \textit{shift sum system of index $\dim K$}, 
gives rise to the exponential sum system. 

Since the CCR relation involves only the imaginary part of the inner product of the test functions, 
to construct an $E_0$-semigroup, we do not necessarily use the same time translation for both the real and imaginary test functions. 

\begin{definition}\label{perturb} Let $\{S_t\}$ and $\{T_t\}$ be $C_0$-semigroups acting on a real Hilbert space $G$. 
We say that $\{T_t\}$ is a perturbation of $\{S_t\}$, if they satisfy,
\begin{enumerate}
\item[(1)] ${T_t}^*S_t =I.$
\item[(2)] $S_t -T_t$ is a Hilbert-Schmidt operator.
\end{enumerate}
We call $(\{S_t\},\{T_t\})$ as above a \textit{perturbation pair}. 
\end{definition}

Let $(\{S_t\}, \{T_t\})$ be a perturbation pair of $C_0$-semigroups acting on $G$. 
Then an easy application of well-known criteria \cite{Ara3},\cite{vD} shows the existence of 
the generalize CCR flow for $(\{S_t\}, \{T_t\})$ defined below (see \cite[Lemma 2.3]{IS} for the proof). 

\begin{definition} Given a perturbation pair $(\{S_t\}, \{T_t\})$ of $C_0$-semigroups acting on a real Hilbert space $G$, 
we say that the $E_0$-semigroup $\{\alpha_t\}_{t\geq 0}$ acting on 
$\B(\Gamma(G^\C))$ determined by 
$$\alpha_t(W(x+iy))=W(S_tx+iT_ty),\quad x,y\in G$$
is a generalized CCR flow associated with the pair $\{S_t\}$ and $\{T_t\}$. 
\end{definition}

\subsection{From sum systems to perturbation pairs}\label{stop}
In the remaining two subsections, we recall the description of the $E_0$-semigroup associated with the product
system constructed out of a sum system given in \cite[Section 3]{IS}. 

Let $(\{G_{a,b}\}, \{S_t\})$ be a sum system. 
Denote $G =G_{0,\infty}$, $A_t = A_{t, \infty}$. 
We may consider $S_t$ as a bounded linear invertible map from $G$ onto $G_{t, \infty}$. 
Hence $(S_t^*)^{-1}$ is a well-defined bounded operator from $G$ onto $G_{t, \infty}$. 
When there is no possibility of confusion, we sometimes regard $(S_t^*)^{-1}$ as an element of $\B(G)$. 
Define $T_t \in \B(G)$, by 
$$T_t =(A_t^*)^{-1} A_t^{-1}(S_t^*)^{-1}~~ \forall~t \in [0,\infty).$$
Then $\{T_t\}$ forms a $C_0$-semigroup on $G$ (\cite[Lemma 3.2]{IS}). 

The following Proposition is proved in \cite[Porposition 3.3, Corollary 3.4]{IS}: 
\begin{prop} \label{e0} Let $(\{G_{s,t}\},\{S_t\})$ be a sum system, and let $\{T_t\}$ be the $C_0$-semigroup as above. 
Then $(\{S_t\},\{T_t\})$ is a perturbation pair of $C_0$-semigroups. 
The product system for the generalize CCR flow for  $(\{S_t\},\{T_t\})$ is the one constructed out of 
the sum system  $(\{G_{s,t}\},\{S_t\})$.
\end{prop}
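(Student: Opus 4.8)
The plan is to establish the two assertions separately. Throughout write $G=G_{0,\infty}$ and $A_t=A_{t,\infty}$, which lies in $\cS(G_{0,t}\oplus G_{t,\infty},G)$ by axiom (2), and regard $S_t$ as the element of $\cS(G,G_{t,\infty})$ supplied by axiom (1) with $t=\infty$.

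\emph{That $(\{S_t\},\{T_t\})$ is a perturbation pair} I would check directly against Definition \ref{perturb}. For $T_t^*S_t=I$ it suffices to verify $\inpr{S_ty}{T_tx}=\inpr{y}{x}$ for all $x,y\in G$; substituting $T_t=(A_t^*)^{-1}A_t^{-1}(S_t^*)^{-1}$ and using that the adjoint of $(A_t^*)^{-1}$ is $A_t^{-1}$, the left side becomes $\inpr{A_t^{-1}S_ty}{A_t^{-1}(S_t^*)^{-1}x}$, and since $S_ty$ and $(S_t^*)^{-1}x$ both lie in $G_{t,\infty}$ while $A_t$ restricts to the identity on $\{0\}\oplus G_{t,\infty}$, this collapses to $\inpr{S_ty}{(S_t^*)^{-1}x}=\inpr{y}{x}$. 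For the Hilbert--Schmidt condition I would rewrite $T_t=(A_tA_t^*)^{-1}(S_t^*)^{-1}$: since $A_t\in\cS$, the operator $I-A_t^*A_t$ is Hilbert--Schmidt, hence so is $I-A_tA_t^*=A_t(I-A_t^*A_t)A_t^{-1}$, whence $(A_tA_t^*)^{-1}=I+(\text{Hilbert--Schmidt})$; and since $S_t\in\cS(G,G_{t,\infty})$ one has $(S_t^*)^{-1}=S_t(S_t^*S_t)^{-1}=S_t+(\text{Hilbert--Schmidt})$. Multiplying these and using that the Hilbert--Schmidt operators form an ideal gives $S_t-T_t\in(\text{Hilbert--Schmidt})$.

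\emph{That the two product systems coincide} hinges on computing $\alpha_t$ inside the tensor factorisation $\Gamma(G^\C)\cong\Gamma(G_{0,t}^\C)\otimes\Gamma(G_{t,\infty}^\C)$ realised by the unitary $\Gamma(A_t)$ of Theorem \ref{Shale}. Computing $\Gamma(A_t)^*\alpha_t(W(x+iy))\Gamma(A_t)$ via Theorem \ref{Shale} produces $W(A_t^{-1}S_tx+iA_t^*T_ty)$, and since $A_t^{-1}S_tx=0\oplus S_tx$ and $A_t^*T_ty=A_t^{-1}(S_t^*)^{-1}y=0\oplus(S_t^*)^{-1}y$, and $S_{S_t}(x+iy)=S_tx+i(S_t^*)^{-1}y$, this equals $I_{\Gamma(G_{0,t}^\C)}\otimes\Gamma(S_t)W(x+iy)\Gamma(S_t)^*$, where $\Gamma(S_t)\colon\Gamma(G^\C)\to\Gamma(G_{t,\infty}^\C)$ is again the unitary of Theorem \ref{Shale}. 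As Weyl operators generate $\B(\Gamma(G^\C))$ as a von Neumann algebra and both sides are normal, this yields
\[
\alpha_t(X)=\Gamma(A_t)\bigl(I_{\Gamma(G_{0,t}^\C)}\otimes\Gamma(S_t)X\Gamma(S_t)^*\bigr)\Gamma(A_t)^*,\qquad X\in\B(\Gamma(G^\C)).
\]
For $\xi\in H_t=\Gamma(G_{0,t}^\C)$ I then set $R_\xi\eta=\Gamma(A_t)(\xi\otimes\Gamma(S_t)\eta)$ and check the three standard points: $\alpha_t(X)R_\xi=R_\xi X$ (immediate from the display and $\Gamma(S_t)^*\Gamma(S_t)=I$); $R_\xi^*R_{\xi'}=\inpr{\xi'}{\xi}I$, so $\xi\mapsto R_\xi$ is isometric into the $t$-th fibre $\{R;\ \alpha_t(X)R=RX\ \forall X\}$ of the product system of $\alpha$; and $\sum_iR_{e_i}R_{e_i}^*=I$ for an orthonormal basis $\{e_i\}$ of $\Gamma(G_{0,t}^\C)$, which forces every intertwiner to lie in the closed span of the $R_{e_i}$, so $\xi\mapsto R_\xi$ is onto. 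Hence it is a unitary identifying $H_t$ with that fibre.

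It remains to check that this identification respects the product structures, recalling that the multiplication on the product system of an $E_0$-semigroup is operator composition. Writing $R^{(t)}_\eta\zeta=\Gamma(A_t)(\eta\otimes\Gamma(S_t)\zeta)$, expanding both $R^{(s)}_\xi R^{(t)}_\eta$ and $R^{(s+t)}_{U_{s,t}(\xi\otimes\eta)}$ with $U_{s,t}=\Gamma(A_{s,t})(I_{H_s}\otimes\Gamma(S_s|_{G_{0,t}}))$, and repeatedly using the functoriality $\Gamma(BA)=\Gamma(B)\Gamma(A)$ to collapse every composite of $\cS$-operators into a single $\Gamma$, the required identity $R^{(s)}_\xi R^{(t)}_\eta=R^{(s+t)}_{U_{s,t}(\xi\otimes\eta)}$ reduces to an elementary relation between two addition-and-shift maps into $G$, which follows from $S_{s+t}=S_sS_t$, from the translation covariance $S_sG_{a,b}=G_{s+a,s+b}$ (itself a consequence of axiom (1) via $G_{a,b}=S_aG_{0,b-a}$), and from the associativity built into the maps $A_{\cdot,\cdot}$. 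I expect the main obstacle to be precisely this bookkeeping, together with pinning down the twist in the displayed formula for $\alpha_t$ — i.e. seeing that $\Gamma(A_t)$ absorbs exactly the discrepancy between applying $S_t$ on real test vectors and $T_t$ on imaginary ones; once the display is in place, the rest is the standard description of the product system of a CCR-type flow.
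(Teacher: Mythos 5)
The paper does not give its own proof of this proposition; it delegates to \cite{IS} (Proposition~3.3 and Corollary~3.4), and your argument follows essentially the same route as that reference: verify $T_t^*S_t=I$ and the Hilbert--Schmidt condition directly from the formula $T_t=(A_t^*)^{-1}A_t^{-1}(S_t^*)^{-1}$, conjugate $\alpha_t$ by $\Gamma(A_t)$ (via Theorem~\ref{Shale}) to factor it through $\Gamma(G_{0,t}^\C)\otimes\Gamma(G_{t,\infty}^\C)$, and realise the $t$-th fibre by $\xi\mapsto R_\xi$. The proposal is correct; the one step you defer (the multiplicativity identity $R^{(s)}_\xi R^{(t)}_\eta=R^{(s+t)}_{U_{s,t}(\xi\otimes\eta)}$) is indeed just the functoriality of $\Gamma$ together with $S_{s+t}=S_sS_t$ and the associativity of the addition maps, as you say.
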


We say that the above pair $(\{S_t\},\{T_t\})$ of $C_0$-semigroups is associated with the sum system 
$(\{G_{a,b}\},\{S_t\})$. 

\subsection{From perturbation pairs to sum systems}\label{ptos}
Let $G$ be a real Hilbert space, and let $(\{S_t\},\{T_t\})$ be a perturbation pair of $C_0$-semigroup acting on $G$. 
We describe the product system for the generalized CCR flow associated with the pair $(\{S_t\}, \{T_t\})$. 

Let $G_{0,t} = \Ker(T_t^*)$, $G_{a,b}=S_a(G_{0,b-a})$, and 
$$G_{0,\infty}=\overline{\bigcup_{t > 0} G_{0,t}}. $$ 
Let $P:G \rightarrow G_{0,\infty}$ be the orthogonal projection. 
We define $S^0_t$ and $T^0_t$ by 
$$S^0_t= PS_tP,\quad T^0_t= PT_tP.$$

The following is \cite[Proposition 3,6]{IS}: 

\begin{prop}\label{StTt} Let $G$ be a real Hilbert space and let $(\{S_t\},\{T_t\})$ be a perturbation pair of 
$C_0$-semigroups acting on $G$. 
Let $\{G_{s,t}\}$, $\{S_t^0\}$, and $\{T_t^0\}$ be as above. Then 
\begin{itemize}
\item[$(1)$] The system $(\{G_{a,b}\}, \{S_t^0\})$ forms a sum system. \medskip
\item[$(2)$] The pair of $C_0$-semigroups 
$(\{S_t^0\},\{T_t^0\})$ is associated with $(\{G_{a,b}\}, \{S_t^0\})$. 
In consequence, the product system for the generalized CCR flow arising from 
$(\{S_t^0\},\{T_t^0\})$ is isomorphic to the one arising from $(\{G_{a,b}\},\{S_t^0\})$.\medskip 
\item[$(3)$] 
The product system for the generalized CCR flow arising from $(\{S_t\}, \{T_t\})$ is 
isomorphic to the product system arising from $(\{G_{a,b}\}, \{S_t^0\})$. 
In consequence, the generalized CCR flow arising from the pair $(\{S_t\},\{T_t\})$ is cocycle conjugate to that arising 
from $(\{S^0_t\}, \{T_t^0\})$.
\end{itemize}
\end{prop}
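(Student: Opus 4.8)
The plan is to verify the three assertions in turn: (1) and (2) come down to computations in the Hilbert--Schmidt calculus, while (3) carries the analytic weight. For (1), the relation $T_t^*S_t=I$ makes $S_t$ and $T_t$ bounded below, so each $G_{a,b}=S_a(\Ker T_{b-a}^*)$ is a closed subspace; using $T_{a+c}^*=T_c^*T_a^*$ and $T_a^*S_a=I$ one gets $S_a(\Ker T_c^*)\subseteq\Ker T_{a+c}^*$, which together with the monotonicity $\Ker T_c^*\subseteq\Ker T_{c+d}^*$ gives $G_{a,b}\subseteq G_{0,\infty}$ and all the inclusions $G_{s,t}\subseteq G_{s',t'}$; in particular $G_{0,\infty}$ is $S_t$-invariant, so $S_t^0=S_t|_{G_{0,\infty}}$ and it is a $C_0$-semigroup. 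Axiom~(1) of Definition~\ref{sumsystem} holds because $S_s^0|_{G_{0,t}}$ is the bijection of $G_{0,t}$ onto $G_{s,s+t}$ with bounded inverse $T_s^*|_{G_{s,s+t}}$, and $(S_s^0|_{G_{0,t}})^*(S_s^0|_{G_{0,t}})-I$ is the compression to $G_{0,t}$ of $S_s^*S_s-I=(S_s-T_s)^*S_s$, hence Hilbert--Schmidt. For axiom~(2), a direct check using $T_s^*S_s=I$ shows $A_{s,t}$ is bijective with inverse $A_{s,t}^{-1}w=\bigl((I-S_sT_s^*)w,\,S_sT_s^*w\bigr)$, so $A_{s,t}$ is bounded invertible, and $A_{s,t}^*A_{s,t}-I$ is the off-diagonal operator whose one nonzero block is the compression to $G_{s,s+t}$ of the orthogonal projection $P$ onto $G_{0,s}=(\Ran T_s)^\perp$; since $T_s^*T_s-I=T_s^*(T_s-S_s)$ is Hilbert--Schmidt one has $P=I-S_sT_s^*$ modulo a Hilbert--Schmidt operator, and $I-S_sT_s^*$ annihilates $G_{s,s+t}\subseteq\Ran S_s$, so that block is Hilbert--Schmidt and $A_{s,t}$ is an equivalence operator.

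For (2), feed the sum system of~(1) into the construction of Subsection~\ref{stop} to obtain a semigroup $\{T_t^{\mathrm{rec}}\}$, and identify it with $\{T_t^0\}$. Writing the adjoint of $S_t^0$ relative to the inherited inner products as $PS_t^*|_{G_{t,\infty}}$, using the formula for $A_{t,\infty}^{-1}$ above and the vanishing of $I-S_tT_t^*$ on $G_{t,\infty}\subseteq\Ran S_t$, one computes $T_t^{\mathrm{rec}}=PT_tS_t^*\bigl((S_t^0)^*\bigr)^{-1}$; and since $S_t^*\bigl((S_t^0)^*\bigr)^{-1}w-w$ lies in $(G_{0,\infty})^\perp=\bigcap_{s>0}\Ran T_s$, which is $T_t$-invariant, this equals $PT_tPw=T_t^0w$ for $w\in G_{0,\infty}$. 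Proposition~\ref{e0} for this sum system then gives that $(\{S_t^0\},\{T_t^0\})$ is a perturbation pair and that the product system of its generalized CCR flow is the one built from the sum system.

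Assertion (3) is the crux. Decompose $G=G_{0,\infty}\oplus N$ with $N=(G_{0,\infty})^\perp=\bigcap_{s>0}\Ran T_s$. Then $S_t$ is block upper triangular and $T_t$ block lower triangular for this decomposition, with diagonal blocks $(S_t^0,T_t^0)$ on $G_{0,\infty}$ and $(D_t,D_t')$ on $N$; here $D_t'=T_t|_N$ is a \emph{bijective} $C_0$-semigroup on $N$ (each $z\in N$ lies in $\Ran T_t$ with preimage $S_t^*z$, which again lies in $N$), so $D_t=((D_t')^*)^{-1}$ is bijective too and $(\{D_t\},\{D_t'\})$ is a perturbation pair of invertible semigroups. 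Hence the generalized CCR flow $\delta$ for $(\{D_t\},\{D_t'\})$ on $\B(\Gamma(N^\C))$ is a semigroup of automorphisms, $\delta_t=\mathrm{Ad}(V_t)$ for unitaries $V_t$, and its product system is one-dimensional (trivial). Since the generalized CCR flow for a block-diagonal pair is the tensor product of the flows for the two blocks, it suffices to show the generalized CCR flow $\beta$ for $(\{S_t\},\{T_t\})$ is cocycle conjugate to the one for the block-diagonal pair $(\{S_t^0\oplus D_t\},\{T_t^0\oplus D_t'\})$: its product system is then that of the flow for $(\{S_t^0\},\{T_t^0\})$ tensored with the trivial one, i.e., by~(2), the product system of the sum system. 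To get the cocycle conjugacy one straightens the off-diagonal blocks: they are Hilbert--Schmidt (being blocks of $S_t-T_t$), so the associated block-triangular unipotent real-linear maps of $G^\C$ are Hilbert--Schmidt perturbations of the identity, hence equivalence operators, and conjugating $\beta$ by the $\Gamma(\cdot)$'s of these maps (Theorem~\ref{Shale}) realizes exactly the passage from $(\{S_t\},\{T_t\})$ to the block-diagonal pair, with Lemma~\ref{bdd} ensuring these are implemented by unitaries of $\Gamma(G^\C)$. Finally, since isomorphic product systems force cocycle conjugacy of the $E_0$-semigroups \cite{Arv1}, the ``in consequence'' clauses of (2) and (3) follow.

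I expect the real obstacle to be this straightening step in (3): one must verify that the family of unipotent symplectic maps produced from the Hilbert--Schmidt off-diagonal blocks assembles into a genuine unitary cocycle for $\beta$ compatible with the semigroup law --- not merely a formal coincidence of the symplectic families $t\mapsto R_t$ --- and this is exactly where the domain and boundedness bookkeeping of Lemma~\ref{bdd} is needed.
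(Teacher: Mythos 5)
The paper states this proposition but does not prove it --- it is quoted from \cite[Proposition 3.6]{IS} --- so there is no in-paper argument to compare against. Assessing your attempt on its own merits: parts (1) and (2) are essentially sound. Your verification of (1) correctly uses $T_t^*S_t = I$ for the inclusions and invariance, the factorization $S_s^*S_s - I = (S_s - T_s)^*S_s$ for the Hilbert--Schmidt defect of $S_s^0|_{G_{0,t}}$, and the comparison of the idempotent $I - S_sT_s^*$ of Lemma~\ref{direct sum} with the orthogonal projection onto $G_{0,s}$ for the Hilbert--Schmidt defect of $A_{s,t}$. The computation in (2) is terse but the ingredients are there. Your block decomposition in (3) --- with $N = (G_{0,\infty})^\perp = \bigcap_{s>0} \Ran T_s$, the triangular forms of $S_t$ and $T_t$, and the observation that $(\{D_t\},\{D_t'\})$ is an invertible perturbation pair with trivial product system --- is also correct, and one can indeed write down unipotent symplectic maps $R_t$ satisfying $\mathrm{Ad}(\Gamma(R_t))\circ\beta_t = \beta_t'$ (the compatibility between the real and imaginary components reduces to the $(1,2)$-entry of $T_t^*S_t = I$).

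The gap, which you flag but do not close, is the multiplicative compatibility. The pointwise identity $\mathrm{Ad}(\Gamma(R_t))\circ\beta_t = \beta_t'$ says only that for each fixed $t$ some unitary conjugates one endomorphism to the other; it does not by itself yield cocycle conjugacy or an isomorphism of product systems. For the induced map $T \mapsto \Gamma(R_t)T$ on intertwiner spaces to respect the product-system multiplication, one needs the $\beta$-cocycle relation $\Gamma(R_{s+t}) = \Gamma(R_s)\,\beta_s(\Gamma(R_t))$, and this is precisely what is not verified. Starting from $\beta_{s+t}' = \beta_s'\circ\beta_t'$ one only gets that $\Gamma(R_s)^*\Gamma(R_{s+t})$ and $\beta_s(\Gamma(R_t))$ implement the same automorphism of $\beta_s(\B(\Gamma(G^\C)))$, hence differ by a unitary in the commutant of that subalgebra; showing that unitary is trivial requires a further argument. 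Lemma~\ref{bdd} is not the tool for this --- it upgrades a densely defined symplectic Weyl intertwiner to a bounded equivalence operator, but you already have $R_t$ bounded and invertible; what is missing is the compatibility of $\{\Gamma(R_t)\}$ with the semigroup law of $\beta$. Until that step is supplied, part (3) is incomplete.
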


The following easy observation \cite[Lemma 3.5]{IS} is very useful:

\begin{lemma} \label{direct sum} Let $(\{S_t\},\{T_t\})$ be a perturbation pair of $C_0$-semigroups acting on 
a real Hilbert space $G$. 
The two operators $S_tT_t^*$ and $I-S_tT_t^*$ are idempotents such that 
$\Ran(S_tT_t^*)=\Ker(I-S_tT_t^*)=\Ran(S_t)$ and  
$\Ran(I-S_tT_t^*)=\Ker (S_tT_t^*)=G_{0,t}$. 
In particular, the Hilbert space $G$ is a topological direct sum of $G_{0,t}$ and $\Ran(S_t)$.  
\end{lemma}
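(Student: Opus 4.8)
The plan is to use only the first axiom of a perturbation pair, $T_t^*S_t=I$, together with boundedness of all operators involved; the Hilbert--Schmidt condition in Definition~\ref{perturb}(2) plays no role in this lemma. Abbreviate $E=S_tT_t^*\in\B(G)$.

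First I would verify that $E$ is idempotent: $E^2=S_t(T_t^*S_t)T_t^*=S_tT_t^*=E$, and hence $I-E$ is idempotent as well. For a bounded idempotent $E$ on a Hilbert space one has the purely algebraic identities $\Ker(I-E)=\Ran(E)$ and $\Ran(I-E)=\Ker(E)$, together with the topological direct sum decomposition $G=\Ran(E)+\Ker(E)$ (with $E$ the bounded, in general non-orthogonal, projection onto $\Ran(E)$ along $\Ker(E)$); these I would simply invoke. So the whole lemma reduces to computing $\Ran(E)$ and $\Ker(E)$.

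Next, $T_t^*S_t=I$ forces $S_t$ to be bounded below, since $\|x\|=\|T_t^*S_tx\|\leq\|T_t^*\|\,\|S_tx\|$; in particular $S_t$ is injective with closed range. Then $\Ran(E)=\Ran(S_tT_t^*)\subseteq\Ran(S_t)$ trivially, while for every $x\in G$ we have $S_tx=S_t(T_t^*S_tx)=E(S_tx)\in\Ran(E)$, giving the reverse inclusion; thus $\Ran(E)=\Ran(S_t)$, and by the idempotent identity $\Ker(I-E)=\Ran(S_t)$. For the kernel, $Ex=0$ reads $S_t(T_t^*x)=0$, which by injectivity of $S_t$ is equivalent to $T_t^*x=0$, i.e.\ $x\in\Ker(T_t^*)=G_{0,t}$; hence $\Ker(E)=G_{0,t}$, and by the idempotent identity $\Ran(I-E)=G_{0,t}$. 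Finally, the asserted topological direct sum of $G_{0,t}$ and $\Ran(S_t)$ is precisely the decomposition $G=\Ker(E)+\Ran(E)$ afforded by the bounded idempotent $E$.

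I do not expect a genuine obstacle here; the only point needing a moment's care is that $S_t$ is injective, so that "$S_t$ annihilates a vector" can be upgraded to "$T_t^*$ annihilates it" in the kernel computation, but that is immediate from $T_t^*S_t=I$.
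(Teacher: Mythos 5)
Your proof is correct and is essentially the standard argument one would give for this easy observation; the paper itself does not reprove it but cites \cite[Lemma 3.5]{IS}. The key steps — $E=S_tT_t^*$ is idempotent from $T_t^*S_t=I$, $S_t$ is injective with closed range because it has a bounded left inverse, $\Ran(E)=\Ran(S_t)$ via $S_tx=E(S_tx)$, and $\Ker(E)=\Ker(T_t^*)=G_{0,t}$ by injectivity of $S_t$ — are all in order, and invoking the general facts about bounded idempotents for the topological direct sum is exactly the right move. You are also right that the Hilbert--Schmidt condition is irrelevant here.
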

\section{Divisibility}
Bhat-Srinivasan \cite{BS} introduced the notion of divisibility for sum systems.      
In this section we show that every sum system (in the sense of Definition \ref{sumsystem}) is divisible. 

\subsection{Addits}
We first recall the definition of addits for a sum system, which were called additive units in \cite{BS}. 

\begin{definition} Let $(\{G_{a,b}\}, \{S_t\})$ be a sum system. 
A real addit for the sum system $(\{G_{a,b}\}, \{S_t\})$ is a
family $\{x_t\}_{t \in (0,\infty)}$ such that
$x_t \in G_{0,t}$  for  $\forall t \in (0, \infty)$, satisfying the following conditions: 
\begin{itemize}
\item [(1)] The map $ t \mapsto \inpr{x_t}{x}$ is measurable for any $ x \in G_{0,\infty}$. 
\item [(2)] The cocycle relation $x_s+ S_s x_t = x_{s+t},$ holds for $\forall s, t\in (0, \infty)$. 
\end{itemize}
An imaginary addit for the sum system $(\{G_{a,b}\}, \{S_t\})$ is a family $\{y_t\}_{t \in (0,\infty)}$ such that
$y_t \in G_{0,t}$ for $\forall ~t \in (0,  \infty)$, satisfying the following conditions: 
\begin{itemize}
\item [(1)] The map $ t \mapsto \inpr{y_t}{y}$ is measurable for  $\forall y \in G_{0,\infty}$. 
\item [(2)] The cocycle relation $(A_{s,t}^*)^{-1}(y_s \oplus (S_s^*)^{-1} y_t) =y_{s+t}$ holds for $\forall s, t,\in (0, \infty).$ 
\end{itemize}
We denote by $\RAU$ the set of all real addits and by $\IAU$ the set of all imaginary addits respectively, which are real linear spaces. 
\end{definition}

For a given real addit $\{x_t\}$, set $x_{s,t}= S_s(x_{t-s}) \in G_{s, t}$. 
Similarly for a given imaginary addit $\{y_t\}$, set $y_{s,t}=(S_s^*)^{-1}(y_{t-s}) \in G_{s, t}$.
For $(s_1, s_2) \subset (0,s)$, we also set 
$$G_{0,s} \ni {}^s y^\prime_{s_1,s_2}= (\tilde{A}^*)^{-1}(0 \oplus y_{s_1, s_2} \oplus 0),$$ 
where $\tilde{A}$ is the map $G_{0,s_1} \oplus G_{s_1,s_2} \oplus G_{s_2, s}\ni x\oplus y \oplus z \mapsto x+y+z \in G_{0,s}$. 
By definition, we have ${ }^sy^\prime_{s_1,s_2} \in \left(G_{0,s_1} \bigvee G_{s_2,s}\right)^{\perp} \cap G_{0,s}.$ 

\begin{definition} A sum system $(\{G_{a,b}\},\{S_t\})$ is said to be 
divisible if each of $\RAU$ and $\IAU$ generates the sum system, that is, 
$$G_{0,s} =  \overline{\spa_\R  \{x_{s_1,s_2}; (s_1,s_2) \subseteq (0,s), \{x_t\} \in\RAU\}},$$ 
$$G_{0,s} =  \overline{\spa_\R  \{{}^s y^\prime_{s_1,s_2}; (s_1,s_2) \subseteq(0,s), \{y_t\} \in \IAU\}}.$$
\end{definition}

\begin{remark}
Let $(\{H_t\},\{U_{s,t}\})$ be a product system arising from $(\{G_{s,t}\},\{S_t\})$. 
Bhat-Srinivasan \cite{BS} observed that every real addit $\{x_t\}$ gives a unitary operator $W(x_t)$ on $H_t$, and the family 
$\{W(x_t)\}_{t>0}$ forms an automorphism of the product system. 
For an imaginary addit $\{y_t\}$, the family $\{W(iy_t)\}_{t>0}$ forms an automorphism as well. 
Using this observation, they showed the following dichotomy result:  
\end{remark}

\begin{theorem}[\cite{BS}] \label{IorIII} Every product system arising from a divisible sum system is either type I or type III. 
\end{theorem}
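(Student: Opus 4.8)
\textbf{Proof proposal for Theorem \ref{IorIII}.}

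The plan is to reduce the statement to Arveson's structure theory for product systems together with Tsirelson's ``automorphism trick'' already recorded in the Remark preceding the theorem. Suppose the product system $(\{H_t\},\{U_{s,t}\})$ arises from a divisible sum system $(\{G_{a,b}\},\{S_t\})$ and is \emph{not} of type III, so that a unit $\{u_t\}$ exists; we must show it is of type I, i.e.\ that the units generate. First I would fix the unit $\{u_t\}$ and, for each real addit $\{x_t\}$ and each subinterval $(s_1,s_2)\subseteq(0,t)$, form the new section $\{W(x_{s_1,s_2})u_t\}_{t>0}$ (and analogously $\{W(iy_{s_1,s_2})u_t\}$ for imaginary addits). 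The key point, which comes straight from the Remark, is that $\{W(x_t)\}_{t>0}$ and $\{W(iy_t)\}_{t>0}$ are automorphisms of the product system; composing such automorphisms (localized to subintervals, using the tensor-factorization $H_t=H_{s_1}\otimes H_{s_1,s_2}\otimes H_{s_2,t}$ coming from the $U_{s,t}$) sends units to units. Hence the closed subspace $\cV_t\subseteq H_t$ generated by all units contains $W(x_{s_1,s_2})u_t$ and $W(iy_{s_1,s_2})u_t$ for every real/imaginary addit and every subinterval.

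Next I would unwind what ``$\cV_t$ contains $u_t$ and is invariant under all these Weyl operators'' forces. Working in the vacuum-anchored picture $H_t=\Gamma(G_{0,t}^\C)$ with $u_t$ a multiple of a coherent/exponential vector $e(\xi_t)$ (the general form of a unit over a Fock space, by Arveson), invariance of $\cV_t$ under $W(x_{s_1,s_2})$ and $W(iy_{s_1,s_2})$ means $\cV_t$ contains $e(\xi_t + x_{s_1,s_2})$ and $e(\xi_t + i y_{s_1,s_2})$ up to scalars, for all addits and subintervals. Differentiating in the addit parameter (or taking finite differences of exponential vectors, which stay in the closed span), one extracts from $\cV_t$ the first Fock level the vectors $x_{s_1,s_2}$ and $iy_{s_1,s_2}$, and then by the algebra generated by Weyl operators and exponentials, all symmetric tensors built from them. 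Divisibility says exactly that $\{x_{s_1,s_2}\}$ spans a dense subspace of $G_{0,t}$ and likewise $\{{}^t y'_{s_1,s_2}\}$ spans a dense subspace; a short computation relating ${}^t y'_{s_1,s_2}$ to $y_{s_1,s_2}$ via $(\tilde A^*)^{-1}$ shows the imaginary addits, too, give a dense real subspace. Together the real and imaginary addits give a dense \emph{complex} subspace of $G_{0,t}^\C$, so the symmetric tensors they generate are total in $\Gamma(G_{0,t}^\C)=H_t$. Therefore $\cV_t=H_t$, i.e.\ the units generate, and the product system is of type I.

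I expect the main obstacle to be the passage from ``$\cV_t$ is invariant under the relevant Weyl operators and contains one exponential vector $e(\xi_t)$'' to ``$\cV_t$ contains the first Fock level spanned by the addits,'' carried out \emph{uniformly over all subintervals} and compatibly with the tensor decomposition $H_t=H_{s_1}\otimes H_{s_1,s_2}\otimes H_{s_2,t}$: one has to be careful that the localized automorphisms really act as $I\otimes W(x_{s_1,s_2})|_{\cdot}\otimes I$ after transporting through the $U_{s,t}$'s, and that the closure operations (taking limits of finite differences of exponential vectors in $H_t$) do not leak out of $\cV_t$. Once the first level is captured, the rest is the standard Fock-space fact that symmetric tensor powers of a dense subspace are total, combined with divisibility; the genuinely new input is divisibility of the sum system, which is exactly what the main theorem of this paper supplies and what makes the hypothesis of Theorem \ref{IorIII} vacuous in our setting.
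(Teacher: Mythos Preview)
First, note that the paper itself does not prove Theorem~\ref{IorIII}; it simply quotes the result from \cite{BS}. Your overall strategy---addits yield product-system automorphisms, automorphisms send units to units, and divisibility supplies enough Weyl operators to force the unit span to be everything---is indeed the Bhat--Srinivasan argument.

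There is, however, a genuine gap in your second paragraph: you assert that a unit $u_t\in H_t=\Gamma(G_{0,t}^\C)$ must be a multiple of an exponential vector $e(\xi_t)$, invoking Arveson. That characterization is valid for the \emph{standard} exponential product system, but here $U_{s,t}=\Gamma(A_{s,t})(I\otimes\Gamma(S_s|_{G_{0,t}}))$ is built from the generalized Shale unitaries of Theorem~\ref{Shale}, and these do \emph{not} carry exponential vectors to exponential vectors when $A_{s,t}$ and $S_s|_{G_{0,t}}$ fail to be unitary. A unit for this twisted tensor structure has no reason to be coherent; indeed, if one could show that a priori, the type~I conclusion would follow almost immediately without any appeal to divisibility. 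Consequently the step ``differentiate in the addit parameter to extract first-Fock-level vectors from $e(\xi_t+x_{s_1,s_2})$'' has no footing. The correct route---and what \cite{BS} actually does---bypasses any special form of $u_t$: the type~I part $\{\mathcal{V}_t\}$ generated by the units is itself a sub-product-system, with $U_{s,t-s}^{-1}\mathcal{V}_t=\mathcal{V}_s\otimes\mathcal{V}_{t-s}$, so invariance of each $\mathcal{V}_s$ under the global automorphisms $W(x_s)$ and $W(iy_s)$ propagates through this factorization to invariance of $\mathcal{V}_t$ under all the localized Weyl operators $W(x_{s_1,s_2})$ and $W(i\,{}^ty'_{s_1,s_2})$. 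Divisibility then says precisely that these Weyl operators generate an irreducible family on $\Gamma(G_{0,t}^\C)$, so any nonzero closed invariant subspace is all of $H_t$. Your instinct that the localization step is the crux is correct; its resolution is the sub-product-system structure of $\{\mathcal{V}_t\}$, not any assumed form of $u_t$.
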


The following is \cite[Proposition 37 (ii)]{BS}, \cite[Proposition 4.3]{IS}. 

\begin{prop}\label{h}
Let $(\{G_{(a,b)}\},\{S_t\})$ be a sum system. If $\{x_t\} \in \RAU$ and $\{y_t\}
\in \IAU$, then $$\langle x_t, y_t\rangle = \langle x_1, 
y_1\rangle t , ~~\forall~ t \in (0, \infty).$$ In general for any two intervals 
$(s_1,s_2),~(t_1, t_2) \subset (0, s)$, it is true that
\begin{eqnarray*} 
\langle x_{s_1, s_2}, {}^s y^\prime_{t_1,t_2} \rangle =
\langle x_1, y_1 \rangle |
(s_1, s_2) \cap (t_1, t_2)|,
\end{eqnarray*}
where $|.|$ is the Lebesgue
measure on
$\R$.
\end{prop}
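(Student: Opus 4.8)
The plan is to establish the scalar identity $\langle x_t, y_t\rangle = \langle x_1, y_1\rangle t$ first, and then derive the interval version as a consequence by a counting/additivity argument together with an approximation in $t$ by rationals.

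For the first identity, the key observation is that the cocycle relations for a real addit and an imaginary addit interact well under the inner product because $S_s$ (restricted appropriately) and $(S_s^*)^{-1}$ are adjoint-inverse to each other. Concretely, write $f(t)=\langle x_t,y_t\rangle$. I would compute $f(s+t)=\langle x_{s+t},y_{s+t}\rangle$ by substituting the two cocycle relations. Here one must be a little careful: $x_{s+t}=x_s+S_sx_t$ lives in $G_{0,s+t}$, while $y_{s+t}$ is built via $(A_{s,t}^*)^{-1}(y_s\oplus (S_s^*)^{-1}y_t)$. Using that $A_{s,t}(x\oplus y)=x+y$ identifies $G_{0,s}\oplus G_{s,s+t}$ with $G_{0,s+t}$, and that for $A\in\cS$ the adjoint $A_{s,t}^*$ sends $G_{0,s+t}$ back into the direct sum, I can pair $x_{s+t}$ against $y_{s+t}$ by moving $(A_{s,t}^*)^{-1}$ onto the $x$ side as $A_{s,t}^{-1}$. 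Writing $x_{s+t}=A_{s,t}(x_s\oplus S_s x_t)$ — valid since $x_s\in G_{0,s}$ and $S_s x_t = x_{s,s+t}\in G_{s,s+t}$ — one gets
\[
\langle x_{s+t},y_{s+t}\rangle=\langle A_{s,t}(x_s\oplus S_sx_t), (A_{s,t}^*)^{-1}(y_s\oplus(S_s^*)^{-1}y_t)\rangle=\langle x_s\oplus S_sx_t,\; y_s\oplus(S_s^*)^{-1}y_t\rangle.
\]
This splits as $\langle x_s,y_s\rangle + \langle S_sx_t,(S_s^*)^{-1}y_t\rangle = \langle x_s,y_s\rangle + \langle x_t,y_t\rangle$, i.e. $f(s+t)=f(s)+f(t)$. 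Since $t\mapsto f(t)$ is measurable (both $\{x_t\}$ and $\{y_t\}$ have measurable inner products against fixed vectors, and one checks the pairing is jointly measurable, or one uses $\langle x_t,y_t\rangle = \frac14\sum i^k\|x_t+i^k y_t\|^2$-type polarization together with the measurability of norms), a measurable additive function on $(0,\infty)$ is linear, giving $f(t)=f(1)t$.

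For the interval statement, fix $s$ and intervals $(s_1,s_2),(t_1,t_2)\subset(0,s)$. The plan is first to reduce to the case where all endpoints are rational by approximating, using that both sides depend continuously on the endpoints — the left side because $x_{s_1,s_2}=S_{s_1}x_{s_2-s_1}$ and $t\mapsto x_t$ is weakly measurable hence (being a cocycle) actually norm-continuous on compacta, similarly ${}^sy'_{t_1,t_2}$ varies continuously, and the right side is manifestly continuous in the endpoints. With rational endpoints, subdivide $(0,s)$ into $N$ equal subintervals of length $s/N$ fine enough that $(s_1,s_2)$ and $(t_1,t_2)$ are both unions of such subintervals. Then additivity of the real addit gives $x_{s_1,s_2}=\sum_k x_{I_k}$ over the constituent subintervals $I_k$ of $(s_1,s_2)$, and likewise ${}^sy'_{t_1,t_2}=\sum_\ell {}^sy'_{J_\ell}$ over the subintervals $J_\ell$ of $(t_1,t_2)$ — this decomposition of ${}^sy'$ requires unwinding the definition $(\tilde A^*)^{-1}(0\oplus y_{a,b}\oplus 0)$ and checking it is additive under refining the middle interval, which follows from the cocycle relation for the imaginary addit together with the associativity/factorization properties of the $A$-maps. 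The cross terms $\langle x_{I_k}, {}^sy'_{J_\ell}\rangle$ with $I_k\ne J_\ell$ vanish because ${}^sy'_{J_\ell}\in (G_{0,a}\vee G_{b,s})^\perp$ for $J_\ell=(a,b)$, which kills $x_{I_k}\in G_{I_k}\subset G_{0,a}\vee G_{b,s}$ when $I_k$ is disjoint from $J_\ell$. So only the diagonal terms with $I_k=J_\ell$ survive, there are exactly $|(s_1,s_2)\cap(t_1,t_2)|\cdot N/s$ of them, and each equals $\langle x_{s/N}, {}^sy'_{I_k}\rangle$; I then need that this common diagonal value equals $\langle x_1,y_1\rangle\cdot(s/N)$, which should reduce to the already-established scalar identity after checking $\langle x_{s/N}, {}^sy'_{(a,a+s/N)}\rangle = \langle x_{a,a+s/N}, (S_a^*)^{-1}... \rangle$-type identity — essentially that $\langle x_{a,b},{}^sy'_{a,b}\rangle=\langle x_{b-a},y_{b-a}\rangle$, obtained by the same adjoint-shuffling trick used for the scalar case, applied to the triple decomposition $\tilde A$.

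**The main obstacle** I anticipate is the bookkeeping around ${}^sy'_{s_1,s_2}$: verifying that it is additive when the interval $(s_1,s_2)$ is partitioned, and that $\langle x_{a,b},{}^sy'_{a,b}\rangle$ collapses to the scalar quantity $\langle x_{b-a},y_{b-a}\rangle$ independently of the ambient $s$ and of the position $a$. Both facts hinge on carefully composing the $A$-maps and their adjoints — the associativity constraint on the sum system and the multiplicativity $\Gamma(BA)=\Gamma(B)\Gamma(A)$ have analogues at the level of the $A_{s,t}$'s themselves (namely $A_{s_1,s_2+s_3}(I\oplus A_{s_2,s_3})=A_{s_1+s_2,s_3}(A_{s_1,s_2}\oplus I)$ up to the obvious identifications), and I would isolate that identity as a preliminary lemma before attacking the orthogonality/counting argument. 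Everything else — the additive-measurable-implies-linear step, the disjointness-forces-orthogonality step, and the rational approximation — is routine once that combinatorial core is in place.
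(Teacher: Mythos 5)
The paper does not prove Proposition~\ref{h} --- it is imported by citation from \cite[Proposition 37(ii)]{BS} and \cite[Proposition 4.3]{IS} --- so there is no in-paper proof to compare against. On the merits, your argument is correct in substance. The scalar step is exactly the right computation: cancelling $A_{s,t}$ against $(A_{s,t}^*)^{-1}$ yields additivity of $f(t)=\inpr{x_t}{y_t}$, and measurability then gives $f(t)=tf(1)$. (For the measurability of $f$, the Parseval expansion $\inpr{x_t}{y_t}=\sum_n\inpr{x_t}{e_n}\inpr{e_n}{y_t}$ over a countable orthonormal basis of $G_{0,\infty}$ is cleaner than the polarization you suggest, since polarization would itself presuppose measurability of $\|x_t\pm y_t\|$.)

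For the interval identity, the subdivision-plus-rational-approximation scheme does work, but it commits you to precisely the two auxiliary facts you flag as the main obstacle --- additivity of ${}^sy'$ under refinement of its interval, and norm-continuity of $t\mapsto x_t$ (to pass from rational to arbitrary endpoints) --- and neither is needed. The same cancellation trick gives the interval formula in one pass, decomposing only the $x$-side. If $(s_1,s_2)\cap(t_1,t_2)=\emptyset$ the pairing vanishes because ${}^sy'_{t_1,t_2}\in(G_{0,t_1}\vee G_{t_2,s})^\perp$ while $x_{s_1,s_2}$ lies in $G_{0,t_1}$ or in $G_{t_2,s}$. Otherwise set $(a,b)=(s_1,s_2)\cap(t_1,t_2)$; additivity of the real addit alone splits $x_{s_1,s_2}$ into a part lying in $G_{0,t_1}\vee G_{t_2,s}$, which drops out, plus $x_{a,b}\in G_{a,b}\subset G_{t_1,t_2}$. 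Now $x_{a,b}=\tilde A(0\oplus x_{a,b}\oplus 0)$, and cancelling $\tilde A$ against $(\tilde A^*)^{-1}$ reduces the pairing to $\inpr{x_{a,b}}{y_{t_1,t_2}}$; peeling off $S_{t_1}$ against $(S_{t_1}^*)^{-1}$ gives $\inpr{x_{a-t_1,\,b-t_1}}{y_{t_2-t_1}}$; and two more applications of the imaginary-addit cocycle relation together with the $A$-cancellation collapse this to $\inpr{x_{b-a}}{y_{b-a}}=(b-a)f(1)$. This requires no approximation, no topology on $\RAU$, and no additivity of ${}^sy'$, so the ``main obstacle'' you anticipate simply disappears.
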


\subsection{Additive cocycles} 
Throughout this subsection, we assume that $G$ is a real Hilbert space, and that $(\{S_t\}, \{T_t\})$ is a perturbation pair of 
$C_0$-semigroups acting on $G$. 

\begin{definition} 
A real additive cocycle for the pair $(\{S_t\},\{T_t\})$ is a family $\{c_t\}_{t \geq 0}$ such that
$c_t \in \Ker(T_t^*)$ for $\forall t \in (0, \infty)$, satisfying the following conditions: 
\begin{itemize}
\item [(1)] The map $ t \mapsto \langle c_t, x\rangle$ is measurable for
any $ x \in G$.
\item [(2)] The cocycle relation $c_s+ S_s c_t = c_{s+t}$ holds for $\forall s, t\geq 0$. 
\end{itemize}

An imaginary additive cocycle for the pair $(\{S_t\},\{T_t\})$ is a 
family $\{d_t\}_{t \geq 0}$ such that
$d_t \in \Ker(S_t^*)$ for $\forall ~t \in (0, \infty)$, satisfying the following conditions:
\begin{itemize}
\item [(1)] The map $ t \mapsto \langle y_t, y\rangle$ is measurable for
any $ y \in G$.
\item [(2)] The cocycle relation $d_s +T_s d_t =d_{s+t}$ holds for $\forall s, t\geq 0.$
\end{itemize}
\end{definition}

\begin{remark}
Let $\{\alpha\}_{t\geq 0}$ be the generalized CCR flow associated with $(\{S_t\},\{T_t\})$. 
Then every real additive cocycle $\{c_t\}_{t\geq 0}$ gives a family of unitary operators $\{W(c_t)\}_{t\geq 0}$ on $\Gamma(G^\C)$, 
which forms a gauge cocycle, that is, the unitary $W(x_t)$ is in the commutant of the image of $\alpha_t$ and the cocycle relation 
$W(x_{s+t})=W(x_s)\alpha_t(W(x_t))$ holds for $s,t>0$. 
For an imaginary additive cocycle $\{d_t\}_{t\geq 0}$, the family $\{W(id_t)\}_{t\geq 0}$ is a gauge cocycle as well. 
\end{remark}

In a similar way as in the proof of Proposition \ref{h}, we can show 

\begin{prop}\label{hh} Let the notation be as above. 
If $\{c_t\}$ is a real additive cocycle and $\{d_t\}$ is an imaginary 
additive cocycle for $(\{S_t\},\{T_t\})$, then 
$$\langle c_t, d_t\rangle = \langle c_1, d_1\rangle t, ~~\forall~ t \in (0, \infty).$$ 
More generally, for any finite interval $(a,b)\subset (0,\infty)$,   
we set $c_{a,b}=S_ac_{b-a}$ and $d_{a,b}=T_ad_{b-a}$. 
Then 
\begin{eqnarray*} 
\langle c_{s_1, s_2}, d_{t_1,t_2} \rangle =
\langle c_1, d_1 \rangle |
(s_1, s_2) \cap (t_1, t_2)|.\end{eqnarray*}
\end{prop}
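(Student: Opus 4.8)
The plan is to reduce the statement to a differentiation-in-time argument, mirroring the proof of Proposition \ref{h}. First I would fix the real additive cocycle $\{c_t\}$ and the imaginary additive cocycle $\{d_t\}$ and define $\varphi(t)=\inpr{c_t}{d_t}$ on $(0,\infty)$. The key algebraic input is that, by the defining cocycle relations, $c_{s+t}=c_s+S_sc_t$ and $d_{s+t}=d_s+T_sd_t$; expanding $\inpr{c_{s+t}}{d_{s+t}}$ gives
\begin{equation*}
\varphi(s+t)=\inpr{c_s}{d_s}+\inpr{c_s}{T_sd_t}+\inpr{S_sc_t}{d_s}+\inpr{S_sc_t}{T_sd_t}.
\end{equation*}
Now $\inpr{S_sc_t}{T_sd_t}=\inpr{T_s^*S_sc_t}{d_t}=\inpr{c_t}{d_t}=\varphi(t)$ by condition (1) of Definition \ref{perturb}. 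For the two cross terms I would use the membership conditions: $d_s\in\Ker(S_s^*)$ forces $\inpr{S_sc_t}{d_s}=\inpr{c_t}{S_s^*d_s}=0$, and $c_s\in\Ker(T_s^*)$ forces $\inpr{c_s}{T_sd_t}=\inpr{T_s^*c_s}{d_t}=0$. Hence $\varphi(s+t)=\varphi(s)+\varphi(t)$, i.e. $\varphi$ is additive on $(0,\infty)$.

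Next I would upgrade additivity to linearity. Since $t\mapsto\inpr{c_t}{x}$ and $t\mapsto\inpr{d_t}{x}$ are measurable for every $x\in G$ (condition (1) of the definitions of the cocycles), a polarization/approximation argument shows $\varphi$ is measurable; a measurable additive function on $(0,\infty)$ is linear, so $\varphi(t)=\varphi(1)t=\inpr{c_1}{d_1}t$, which is the first assertion. An alternative, if one prefers to avoid the measurability-implies-linearity lemma, is to observe that $t\mapsto c_t$ and $t\mapsto d_t$ are weakly continuous (which follows from weak measurability together with the cocycle relation and strong continuity of $\{S_t\}$, $\{T_t\}$ in the standard way), so $\varphi$ is continuous and additive, hence linear.

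For the general interval statement, write $u=\inpr{c_1}{d_1}$ and let $I=(s_1,s_2)$, $J=(t_1,t_2)\subset(0,\infty)$. I would first record the basic identity $\inpr{c_{a,b}}{d_{a,b}}=\inpr{S_ac_{b-a}}{T_ad_{b-a}}=\inpr{c_{b-a}}{d_{b-a}}=u(b-a)$, using $T_a^*S_a=I$ again, which handles the case $I=J$. For the general case I would show the ``orthogonality of disjoint pieces'': if $b\le a'$ then $c_{a,b}\in\Ran(S_a)$ while $d_{a',b'}=T_{a'}d_{b'-a'}=S_{a'}T_b'\cdots$ — more directly, $c_{a,b}=S_ac_{b-a}\in\Ran(S_b)$ by the semigroup property combined with $c_{b-a}=S_0\cdots$; actually the clean route is: for $b\le a'$, $d_{a',b'}=T_{a'}d_{b'-a'}$ and $S_b^*d_{a',b'}=S_b^*T_{a'}d_{b'-a'}$. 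Writing $a'=b+(a'-b)$ and using $S_b^*T_{a'}=S_b^*T_bT_{a'-b}=T_{a'-b}$, we get $S_b^*d_{a',b'}=T_{a'-b}d_{b'-a'}$; pairing with $c_{b-a}$ and using $c_{a,b}=S_ac_{b-a}\in S_a(\Ker T_{b-a}^*)$ together with $\Ran(S_b)\perp G_{0,b}=\Ker(S_b^*)$... I would instead simply use Lemma \ref{direct sum}: $c_{a,b}\in\Ran(S_a)$ lies in $\Ran(S_b)$ after noting $c_{a,b}=S_b(\text{something})$ is false, so the cleanest is to pair directly and exploit $T_t^*S_t=I$ and the kernel conditions to kill the off-diagonal terms, exactly as the cross terms vanished above. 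Then for overlapping intervals, decompose $I$ and $J$ into the three standard sub-pieces ($I\setminus J$, $I\cap J$, $J\setminus I$), apply the cocycle relation to split $c_I$ and $d_J$ accordingly, and combine the disjoint-piece orthogonality with the equal-interval formula to obtain $\inpr{c_I}{d_J}=u\,|I\cap J|$.

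The main obstacle is the bookkeeping in the last paragraph: getting the off-diagonal pairings $\inpr{c_{a,b}}{d_{a',b'}}$ with disjoint $(a,b)$, $(a',b')$ to vanish requires careful use of $T_t^*S_t=I$, the semigroup laws, and the defining memberships $c_t\in\Ker T_t^*$, $d_t\in\Ker S_t^*$, in the right order — the subtlety being which semigroup translates which cocycle and whether one lands in $\Ran(S_s)$ or in $\Ker(S_s^*)=G_{0,s}$, these being complementary by Lemma \ref{direct sum}. Once the disjointness orthogonality is in hand, the rest is a routine interval decomposition identical in spirit to the proof of Proposition \ref{h}. I expect no genuine difficulty beyond this; the measurability/linearity step is standard.
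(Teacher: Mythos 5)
Your outline is correct and matches what the paper has in mind: the paper omits the proof, deferring to ``the same way as in the proof of Proposition~\ref{h}'', which is exactly the interval-decomposition argument you sketch (equal intervals give $\inpr{c_1}{d_1}\,|I|$ via $T_a^*S_a=I$, disjoint intervals are orthogonal, overlaps are handled by splitting $c$ and $d$ along the endpoints of $I\cap J$ and using the cocycle relations). One caution: your disjoint-piece orthogonality paragraph contains several abandoned lines and one slip --- for $a<b$ one has $c_{a,b}=S_ac_{b-a}\in\Ran(S_a)$, \emph{not} $\Ran(S_b)$, since $\Ran(S_b)\subseteq\Ran(S_a)$ and not conversely --- though the route you ultimately land on (pair directly and use $T_t^*S_t=I$ together with the kernel memberships) does work. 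The clean version: for $b\le a'$,
\begin{equation*}
\inpr{c_{a,b}}{d_{a',b'}}=\inpr{S_ac_{b-a}}{T_{a'}d_{b'-a'}}=\inpr{T_{a'}^*S_ac_{b-a}}{d_{b'-a'}}=\inpr{T_{a'-a}^*c_{b-a}}{d_{b'-a'}},
\end{equation*}
using $T_{a'}^*=T_{a'-a}^*T_a^*$ and $T_a^*S_a=I$; since $a'-a\ge b-a$ we may write $T_{a'-a}^*=T_{a'-b}^*T_{b-a}^*$, and $T_{b-a}^*c_{b-a}=0$ kills the inner product. The mirror case $b'\le a$ is symmetric, with $S_t^*T_t=I$ in place of $T_t^*S_t=I$ and $S_{t}^*d_t=0$.
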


Clearly the real additive cocycles are the same as the real addits for the sum system $(\{G_{a,b}\},\{S^0_t\})$ 
associated with the pair $(\{S_t\},\{T_t\})$ as defined in subsection \ref{ptos}. 
We describe a precise relationship between the imaginary additive cocycles for $(\{S_t\},\{T_t\})$ and the imaginary addits for 
$(\{G_{a,b}\},\{S^0_t\})$ now. 

Let $A^\prime_t:G_{0,t} \oplus \Ran(S_t) \ni x \oplus y \mapsto x+y\in G$. 
Then it was shown in the proof of \cite[Proposition 3.6]{IS} that $A_t'$ is in $\cS(G_{0,t}\oplus \Ran(S_t),G)$. 
The following lemma is essentially the same as \cite[Remark 5.2]{IS}: 

\begin{lemma}\label{im} 
Let the notation be as above. 
\begin{itemize}
\item [$(1)$] If $\{y_t\}$ is an imaginary addit for the sum system $(\{G_{a,b}\},\{S^0_t\})$ associated with the pair 
$(\{S_t\},\{T_t\})$, then $\{({A'_t}^*)^{-1}(y_t\oplus 0)\}$ is an imaginary additive cocycle for $(\{S_t\},\{T_t\})$. 
\item [$(2)$] If $\{d_t\}$ is an imaginary additive cocycle for $(\{S_t\},\{T_t\})$, then 
${A'_t}^*d_t$ is of the form $y_t\oplus 0$ and $\{y_t\}$ is an imaginary addit for the sum system 
$(\{G_{a,b}\},\{S^0_t\})$. 
\item [(3)] For every $x_t, y_t\in G_{0,t}$, we have 
$\inpr{x_t}{({A'_t}^*)^{-1}(y_t\oplus 0)}=\inpr{x_t}{y_t}$. 
\end{itemize}
\end{lemma}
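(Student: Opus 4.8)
The plan is to exploit the decomposition $G = G_{0,t}\oplus\Ran(S_t)$ from Lemma~\ref{direct sum}, together with the already-established fact that $A'_t\in\cS(G_{0,t}\oplus\Ran(S_t),G)$, and to transport cocycle relations back and forth through the adjoint map $(A_t'^*)^{-1}$. For part~$(1)$, I would set $d_t := (A_t'^*)^{-1}(y_t\oplus 0)$. Measurability of $t\mapsto\inpr{d_t}{y}$ is immediate from measurability of $t\mapsto\inpr{y_t}{y'}$ for all $y'$, since $(A_t'^*)^{-1}$ depends continuously (in fact $C_0$) on $t$. To see $d_t\in\Ker(S_t^*)$: for $z\in\Ran(S_t)$ we have $\inpr{d_t}{z} = \inpr{(A_t'^*)^{-1}(y_t\oplus 0)}{z}$, and because $A_t'$ sends $0\oplus z$ to $z$ while $y_t\oplus 0 \perp 0\oplus z$, this pairing vanishes; hence $d_t\perp\Ran(S_t)=\Ran(S_tT_t^*)$, equivalently $T_tS_t^* d_t = $ \dots, which (using $T_t^*S_t=I$) forces $S_t^* d_t = 0$. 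The cocycle relation $d_s + T_s d_t = d_{s+t}$ is then a direct translation of the imaginary-addit cocycle relation $(A_{s,t}^*)^{-1}(y_s\oplus (S_s^{0*})^{-1}y_t) = y_{s+t}$ for $(\{G_{a,b}\},\{S_t^0\})$: one writes both sides in $G_{0,s+t}\oplus\Ran(S_{s+t})$ and checks compatibility of the two factorizations $A'_{s+t}$ and $A'_s$ composed with the sum-system gluing map, using that $T_s$ restricted appropriately intertwines the relevant maps (this is exactly the content of how $(\{S_t^0\},\{T_t^0\})$ is associated with the sum system, Proposition~\ref{StTt}(2), together with $T_t = (A_t^*)^{-1}A_t^{-1}(S_t^*)^{-1}$).

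For part~$(2)$, given an imaginary additive cocycle $\{d_t\}$ for $(\{S_t\},\{T_t\})$, I would observe that $d_t\in\Ker(S_t^*)=\Ran(S_t)^\perp$, so for $z\in\Ran(S_t)$ we get $\inpr{A_t'^* d_t}{0\oplus z} = \inpr{d_t}{z} = 0$; thus the second component of $A_t'^* d_t\in G_{0,t}\oplus\Ran(S_t)$ vanishes, i.e.\ $A_t'^* d_t = y_t\oplus 0$ for a unique $y_t\in G_{0,t}$. Measurability of $t\mapsto\inpr{y_t}{x}$ for $x\in G_{0,\infty}$ follows from $\inpr{y_t}{x} = \inpr{A_t'^* d_t}{x\oplus 0} = \inpr{d_t}{A_t'(x\oplus 0)} = \inpr{d_t}{x}$ and measurability of $t\mapsto\inpr{d_t}{x}$. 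That $\{y_t\}$ satisfies the imaginary-addit cocycle relation is the inverse of the translation used in part~$(1)$; since $(A_t'^*)^{-1}$ is invertible, parts~$(1)$ and~$(2)$ are mutually inverse correspondences, so it suffices to verify the relation in one direction. Part~$(3)$ is the short computation $\inpr{x_t}{(A_t'^*)^{-1}(y_t\oplus 0)} = \inpr{A_t'^{-1}x_t}{y_t\oplus 0}$; but $x_t\in G_{0,t}$ forces $A_t'^{-1}x_t = x_t\oplus 0$ (since $A_t'(x_t\oplus 0) = x_t$ and $A_t'$ is injective), whence the pairing equals $\inpr{x_t\oplus 0}{y_t\oplus 0} = \inpr{x_t}{y_t}$.

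I expect the main obstacle to be the bookkeeping in the cocycle-relation verification: one must carefully relate the gluing map $A_{s,t}$ of the sum system $(\{G_{a,b}\},\{S_t^0\})$ and its twist by $S_s^0$ to the ambient decomposition maps $A'_s$, $A'_{s+t}$, and to the semigroup $\{T_t\}$, keeping straight which copy of $G_{0,\cdot}$ sits inside which, and which operators act on the compressed space $G_{0,\infty}$ versus on all of $G$. The key identities are $T_t = (A_t^*)^{-1}A_t^{-1}(S_t^*)^{-1}$ and $\Ran(S_t)^\perp = G_{0,t}$, and everything should reduce to linear algebra in the topological direct sum decomposition once the maps are set up correctly. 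Since the statement is flagged as "essentially the same as \cite[Remark 5.2]{IS}", I would keep this verification brief, citing the analogous computation there and only spelling out the points where the present formulation (addits for $\{S_t^0\}$ versus additive cocycles for $\{S_t\}$) differs.
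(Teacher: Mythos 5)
Your proposal follows the paper's argument essentially step for step: $d_t\in\Ker(S_t^*)$ is checked by pairing $({A'_t}^*)^{-1}(y_t\oplus 0)$ against $\Ran(S_t)$, the cocycle relation for $\{d_t\}$ is transported from that of $\{y_t\}$ via a factorization of $T_s$ through $A'_s$, part~(2) is the converse computation, and part~(3) is the short adjoint-pairing identity. That is exactly how the paper proceeds.

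Two small cautions on the details you sketch. First, the summarizing identity ``$\Ran(S_t)^\perp = G_{0,t}$'' is false in general: $G_{0,t}=\Ker(T_t^*)$ is only a \emph{topological} complement of $\Ran(S_t)$ (Lemma~\ref{direct sum}); the orthogonal complement is $\Ker(S_t^*)$, which is where imaginary additive cocycles live. Your actual computations use the correct space, and the detour through $T_tS_t^*$ in part~(1) is superfluous — $d_t\perp\Ran(S_t)$ already gives $S_t^*d_t=0$. Second, the factorization you need is $T_s=({A'_s}^*)^{-1}{A'_s}^{-1}(S_s^*)^{-1}$ with the ambient map $A'_s:G_{0,s}\oplus\Ran(S_s)\to G$ (from \cite[Prop.\ 3.6(c)]{IS}); the formula $T_t=(A_t^*)^{-1}A_t^{-1}(S_t^*)^{-1}$ you quote involves $A_t=A_{t,\infty}$ and, in the present ``perturbation pair $\to$ sum system'' direction, produces the compressed semigroup $T^0_t$, not $T_t$. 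The bookkeeping you flag as the main obstacle is resolved in the paper by taking adjoint inverses of the identity $A'_s(I_{G_{0,s}}\oplus S_sA'_t)=A'_{s+t}(A_{s,t}(I_{G_{0,s}}\oplus S_s)\oplus S_s)$ on $G_{0,s}\oplus G_{0,t}\oplus S_tG$; that identity is the one step you would still have to write out rather than cite.
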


\begin{proof} (1) Assume that $\{y_t\}$ is an imaginary addit for the sum system $(\{G_{a,b}\},\{S^0_t\})$ 
and set $d_t=({A_t^\prime}^*)^{-1}(y_t\oplus 0)$. 
Then 
$$\langle d_t, S_tx\rangle=\langle y_t \oplus 0, 0 \oplus S_t x\rangle= 0,~~\forall ~ x \in G,$$ 
and hence $S_t^*({A_t^\prime}^*)^{-1}(y_t\oplus 0)=0$. 
Using $T_s = ({A_s^\prime}^*)^{-1}{A_s^\prime}^{-1}(S_s^*)^{-1}$ (see the proof of \cite[Proposition 3.6,(c)]{IS}),  
we get 
\begin{eqnarray*}
d_s+T_sd_t &=&({A'_s}^*)^{-1}(y_s\oplus 0)+({A'_s}^*)^{-1}{A'_s}^{-1}(S_s^*)^{-1}d_t\\
 &=&({A'_s}^*)^{-1}(y_s\oplus ({S_s}^*)^{-1}({A'_t}^*)^{-1}y_t).
\end{eqnarray*}
Since  we have 
\begin{eqnarray*}A'_s(I_{G_{0,s}}\oplus S_sA'_t)(f\oplus g\oplus S_th)
&=&f+S_sg+S_{s+t}h\\
&=&A'_{s+t}(A_{s,t}(I_{G_{0,s}}\oplus S_s)\oplus S_s)(f\oplus g\oplus S_th),
\end{eqnarray*}
for $f\in G_{0,s}$, $g\in G_{0,t}$, and $h\in G$, we get 
$$({A'_s}^*)^{-1}(I_{G_{0,s}}\oplus ({S_s}^*)^{-1}({A'_t}^*)^{-1})=({A'_{s+t}}^*)^{-1}(({A_{s,t}}^*)^{-1}(I_{G_{0,s}}\oplus (S_s^*)^{-1})\oplus 
({S_s}^*)^{-1}).$$
Therefore 
$$d_s+T_sd_t= ({A'_{s+t}}^*)^{-1}(({A_{s,t}}^*)^{-1}(y_s\oplus (S_s^*)^{-1}y_t)\oplus 0),$$
and the cocycle relation for $\{y_t\}$ implies $d_s+T_td_t=d_{s+t}$. 

(2) Assume conversely that $\{d_t\}$ is an imaginary additive cocycle for $(\{S_t\},\{T_t\})$. 
Then 
$$\langle {A_t^\prime}^*d_t, 0 \oplus S_tx\rangle = \langle d_t, S_t x\rangle = 0, ~~~ \forall ~ x \in G,$$ 
and hence there exists $y_t\in G_{0,t}$ such that ${A_t^\prime}^*d_t=y_t\oplus 0$. 
The above computation shows that $d_s+T_td_t=d_{s+t}$ implies 
$$({A_{s,t}}^*)^{-1}(y_s\oplus (S_s^*)^{-1}y_t)=y_{s+t},$$
and so $\{y_t\}$ is an imaginary addit for $(\{G_{a,b}\},\{S^0_t\})$. 

(3) follows from the definition of $A'_t$. 
\end{proof}

Let $A$ and $B$ be the generators of the $C_0$-semigroups $\{S_t\}$ and $\{T_t\}$ respectively. 
In what follows, we often regard $G$ as a real subspace of its complexification $G^\C$ and identify operators on $G$ 
with their complex linear extensions to $G^\C$. 
Let 
$$C=\max\{\lim_{t\to+\infty}\frac{\log ||S_t||}{t},\lim_{t\to+\infty}\frac{\log ||T_t||}{t}\},$$
which is finite thanks to \cite[page 232]{Y}. 
Then for $\RE z>C$, we have 
$$(zI-A)^{-1}f=\int_0^\infty e^{-tz}S_tfdt,$$
$$(zI-B)^{-1}f=\int_0^\infty e^{-tz}T_tfdt.$$

For a densely defined closed operator $R$ of $G$ with domain $D(R)$, we denote by $\cG(R)$ the graph of $R$ 
$$\cG(R)=\{f\oplus Rf\in G\oplus G;\; f\in D(R)\},$$
by $\inpr{\cdot}{\cdot}_R$ the graph inner product 
$$\inpr{f}{g}_R=\inpr{f}{g}+\inpr{Rf}{Rg},\quad f,g\in D(R),$$
and by $\|\cdot \|_R$ the graph norm $\|f\|=\inpr{f}{f}_R^{1/2}$. 
Then $\cG(R)$ is a closed subspace of $G\oplus G$.  
Let $\cJ$ be the unitary of $G\oplus G$ defined by 
$$\cJ(f\oplus g)=g\oplus -f,\quad f,g\in G.$$
Then $\cG(R^*)=\cJ \cG(R)^\perp$. 

The relation $T_t^*S_t=I$ implies $B\subset -A^*$ and $A\subset -B^*$. 
Let $\cK$ be the orthogonal complement of $D(B)$ in $D(-A^*)$ with respect to the inner product $\inpr{\cdot}{\cdot}_{A^*}$.   
We always regard $\cK$ as a Hilbert space equipped with the graph inner product. 
We let $\cK'$ be the orthogonal complement of $D(A)$ in $D(-B^*)$ with respect to the inner product 
$\inpr{\cdot}{\cdot}_{B^*}$ and regard it as a Hilbert space in a similar way.

\begin{lemma} \label{graph} 
The restriction of $A^*$ to $\cK$ is a unitary from $\cK$ onto $\cK'$ and it inverse is the restriction of $B^*$ to $K'$. 
\end{lemma}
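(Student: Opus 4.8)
The plan is to transport everything into $G\oplus G$ via graphs: I will realise $\cK$ and $\cK'$ as explicit orthogonal complements there, and identify the desired unitary with the flip $F\in\B(G\oplus G)$, $F(f\oplus g)=g\oplus f$.

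First, a one-line computation from the definition of the adjoint shows that for any densely defined closed operator $R$ on $G$ one has $(F\cG(R))^\perp=\cG(-R^*)=\{f\oplus(-R^*f):\ f\in D(R^*)\}$; in particular $\cG(-A^*)=(F\cG(A))^\perp$ and $\cG(-B^*)=(F\cG(B))^\perp$. The map $\iota_A\colon f\mapsto f\oplus(-A^*f)$ is an isometry of $(D(A^*),\inpr{\cdot}{\cdot}_{A^*})$ onto $\cG(-A^*)$; since $B\subset-A^*$ it carries $D(B)$ onto $\cG(B)$, and $\cG(B)$ is a closed subspace of $\cG(-A^*)$ because the restriction of $\inpr{\cdot}{\cdot}_{A^*}$ to $D(B)$ is just the graph inner product of the closed operator $B$. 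Hence $\iota_A$ identifies $\cK=D(A^*)\ominus D(B)$ with $\cL:=\cG(-A^*)\ominus\cG(B)$, and as $\cG(B)\subset\cG(-A^*)=(F\cG(A))^\perp$ we may write $\cL=(\cG(B)\oplus F\cG(A))^\perp$. Symmetrically, using $A\subset-B^*$, the isometry $\iota_B\colon g\mapsto g\oplus(-B^*g)$ identifies $\cK'$ with $\cL':=\cG(-B^*)\ominus\cG(A)=(\cG(A)\oplus F\cG(B))^\perp$.

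Now $F(\cG(A)\oplus F\cG(B))=F\cG(A)\oplus\cG(B)$, so $F\cL'=\cL$; that is, $F$ restricts to a unitary of $\cL$ onto $\cL'$. Transporting back through $\iota_A,\iota_B$: for $f\in\cK$ the vector $\iota_A(f)=f\oplus(-A^*f)$ is sent by $F$ to $(-A^*f)\oplus f\in\cL'\subset\cG(-B^*)$, and membership in $\cG(-B^*)$ means precisely that $A^*f\in D(B^*)$ and $B^*A^*f=f$; applying $\iota_B^{-1}$ then returns $-A^*f$. So $f\mapsto-A^*f$ is a unitary of $\cK$ onto $\cK'$, hence so is $f\mapsto A^*f$, and the relation $B^*A^*f=f$ (valid for all $f\in\cK$, with $\cK'\subset D(B^*)$) identifies its inverse as the restriction of $B^*$ to $\cK'$.

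I expect the only real work to be recognising this graph/flip picture; once it is in place, the rest is bookkeeping: checking that $\cG(B)$ sits inside $\cG(-A^*)$ as a closed subspace carrying the induced inner product, the orthogonality $\cG(B)\perp F\cG(A)$, and keeping the signs straight so that one lands on $A^*$ with inverse $B^*$ rather than on their negatives. No analytic input beyond the inclusions $B\subset-A^*$ and $A\subset-B^*$ already recorded above is needed, so the $C_0$-semigroup structure plays no further role in this particular lemma.
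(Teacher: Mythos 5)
Your proof is correct and is essentially the paper's own argument: both realise $\cK$ and $\cK'$ as the orthogonal complements $\cG(-A^*)\ominus\cG(B)$ and $\cG(-B^*)\ominus\cG(A)$ inside $G\oplus G$ and then relate them by a fixed unitary of $G\oplus G$ coming from the adjoint/graph duality. The only cosmetic difference is that you use the flip $F(f\oplus g)=g\oplus f$ together with $(F\cG(R))^\perp=\cG(-R^*)$, whereas the paper uses the rotation $\cJ(f\oplus g)=g\oplus(-f)$ together with $\cG(R^*)=\cJ\cG(R)^\perp$; the computations are interchangeable and yield the same conclusion $B^*A^*|_\cK=\mathrm{id}_\cK$ with $A^*|_\cK$ unitary.
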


\begin{proof} By definition, we have 
$$\cG(-A^*)\cap \cG(B)^\perp=\{p\oplus -A^*p;\; p\in \cK\},$$
$$\cG(-B^*)\cap \cG(A)^\perp=\{q\oplus -B^*q;\;q\in \cK'\}.$$
On the other hand, 
\begin{eqnarray*}
\cG(-B^*)\cap \cG(A)^\perp &=&\cJ (\cG(-B)^\perp\cap \cG(A^*)) \\
 &=&\{A^*p\oplus -p;\; p\in \cK\}, 
\end{eqnarray*}
which shows that $A^*|_\cK$ is a linear isomorphism from $\cK$ onto $\cK'$ whose inverse is $B^*|_{\cK'}$. 
Since 
$$\inpr{A^*f}{A^*g}_{B^*}=\inpr{A^*f}{A^*g}+\inpr{B^*A^*f}{B^*A^*g}=\inpr{f}{g}+\inpr{A^*f}{A^*g}=\inpr{f}{g}_{A^*},$$
for $f,g\in \cK$, the restriction $A^*|_\cK$ is a unitary operator. 
\end{proof}

\begin{definition} The index of a perturbation pair $(\{S_t\},\{T_t\})$ of $C_0$-semigroups is the number 
$\dim \cK=\dim \cK'$.  
\end{definition}

Recall $G_{0,t}=\Ker T_t^*$ and  $G_{0,\infty}=\overline{\cup_{t>0}G_{0,t}}$. 
For $p\in \cK$ and $t>0$, we set 
\begin{equation}\label{cocycle}
c(p)_t=\int_0^tS_spds+S_tA^*p-A^*p.
\end{equation}

\begin{lemma} \label{realadditive} 
$\{c(p)_t\}_{t>0}$ is a real additive cocycle for $(\{S_t\},\{T_t\})$. 
\end{lemma}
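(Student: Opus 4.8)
The plan is to verify directly the three requirements in the definition of a real additive cocycle. Measurability is the easiest point: the map $t\mapsto c(p)_t$ is norm-continuous, since $s\mapsto S_sp$ is continuous and $\{S_t\}$ is strongly continuous, so $t\mapsto\inpr{c(p)_t}{x}$ is (even continuous, hence) measurable for every $x\in G$. The cocycle relation $c(p)_s+S_sc(p)_t=c(p)_{s+t}$ is a short computation: applying $S_s$ to \eqref{cocycle} and using $S_s\int_0^tS_\sigma p\,d\sigma=\int_s^{s+t}S_\sigma p\,d\sigma$ together with $S_sS_t=S_{s+t}$, the two copies of $S_sA^*p$ (one coming from $c(p)_s$, the other from $S_sc(p)_t$) cancel, and one is left with $\int_0^{s+t}S_\sigma p\,d\sigma+S_{s+t}A^*p-A^*p=c(p)_{s+t}$.

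The substantive point is to check that $c(p)_t\in\Ker(T_t^*)=G_{0,t}$, i.e. $T_t^*c(p)_t=0$. First I would record that, since the adjoint semigroup $\{T_u^*\}$ is commutative and $T_s^*S_s=I$, one has $T_t^*S_s=T_{t-s}^*T_s^*S_s=T_{t-s}^*$ for $0\le s\le t$, while $T_t^*S_t=I$. Applying the bounded operator $T_t^*$ termwise to \eqref{cocycle} and substituting $u=t-s$ in the resulting integral gives
\[
T_t^*c(p)_t=\int_0^tT_u^*p\,du+A^*p-T_t^*A^*p.
\]
Now I invoke Lemma \ref{graph}: since $p\in\cK$, the vector $A^*p$ lies in $\cK'\subset D(B^*)$ and $B^*A^*p=p$. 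Because $\{T_u^*\}$ is a $C_0$-semigroup whose generator is $B^*$, the orbit $u\mapsto T_u^*A^*p$ is differentiable with derivative $T_u^*B^*A^*p=T_u^*p$, whence $T_t^*A^*p-A^*p=\int_0^tT_u^*p\,du$. Substituting this into the displayed identity yields $T_t^*c(p)_t=0$, so $c(p)_t\in G_{0,t}$.

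The only step that is not purely formal is the appeal to Lemma \ref{graph} to place $A^*p$ in the domain of the generator of $\{T_u^*\}$ and to identify that generator's value at $A^*p$ as $p$; this is exactly what makes the last term above cancel the integral. Granting that lemma, together with the standard fact that on a Hilbert space the adjoint of a $C_0$-semigroup is again a $C_0$-semigroup with generator the (Hilbert-space) adjoint of the original generator, the proof is complete.
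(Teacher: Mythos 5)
Your argument is correct, and it reaches the key cancellation $T_t^*c(p)_t=0$ by a genuinely different route from the paper. The paper computes the same intermediate identity
\[
T_t^*c(p)_t=\int_0^tT_{t-s}^*p\,ds+A^*p-T_t^*A^*p,
\]
but then takes the Laplace transform in $t$ (for $\RE z>2C$), uses the resolvent $(zI-B^*)^{-1}$ together with Lemma~\ref{graph} in the form $B^*A^*p=p$ to show the transform vanishes identically, and concludes $T_t^*c(p)_t=0$ from continuity of $t\mapsto T_t^*c(p)_t$ and injectivity of the Laplace transform. You instead invoke Lemma~\ref{graph} to place $A^*p$ in $\cK'\subset D(B^*)$ with $B^*A^*p=p$, then differentiate the orbit $u\mapsto T_u^*A^*p$ directly (using that on a Hilbert space the adjoint semigroup $\{T_u^*\}$ is again $C_0$ with generator $B^*$) to obtain $T_t^*A^*p-A^*p=\int_0^tT_u^*p\,du$, which cancels the integral term outright. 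Both proofs hinge on exactly the same input from Lemma~\ref{graph}; the difference is that you process the semigroup by the fundamental theorem of calculus for orbits in the domain of the generator, while the paper processes it in the transform domain. Your version is shorter and avoids the back-and-forth through the Laplace transform, at the cost of explicitly invoking the Hilbert-space fact about adjoint $C_0$-semigroups and their generators (which the paper uses implicitly anyway when writing $(zI-B^*)^{-1}=\int_0^\infty e^{-tz}T_t^*\,dt$). You also spell out the cocycle identity and measurability, which the paper dismisses as easy; both are correct as you state them.
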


\begin{proof} It is easy to show the cocycle relation and all we have to show is $T_t^*c(p)_t=0$. 
Note that we have 
$$T_t^*c(p)_t=\int_0^tT_{t-s}^*pds+A^*p-T_t^*A^*p.$$
For $\RE z>2C$, 
\begin{eqnarray*}
\int_0^\infty e^{-tz}T_t^*c(p)_tdt
&=&\int_0^\infty e^{-tz}\int_0^t T_{t-s}^*p dsdt+\frac{1}{z}A^*p-\int_0^\infty e^{-tz} T_t^*A^*p dt\\
 &=& \int_0^\infty \int_s^\infty e^{-tz}T_{t-s}^*p dtds+\frac{1}{z}A^*p-(z-B^*)^{-1}A^*p\\
 &=&\frac{1}{z}(zI-B^*)^{-1}p-\frac{1}{z}(zI-B^*)^{-1}B^*A^*p\\
 &=&0,
\end{eqnarray*}
where we use Lemma \ref{graph}. 
Since this holds for all $\RE z>2C$ and the map $t\mapsto T_t^*c(p)_t$ is continuous,  we conclude 
$T_t^*c(p)_t=0$. 
\end{proof}

\begin{theorem} \label{totla} 
Let the notation be as above. 
Then
$$\overline{ \spa\{  S_ac(p)_b;\; a+b\leq t,\; p\in \cK  \} }=G_{0,t},$$ 
holds for all $t\in (0,\infty]$.  
\end{theorem}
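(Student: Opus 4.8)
The plan is to show that the closed span in question, call it $H_{0,t}$, is all of $G_{0,t}$ by a duality argument. Since $\{c(p)_b\}$ is a real additive cocycle by Lemma \ref{realadditive}, we have $c(p)_b \in G_{0,b}$, hence $S_ac(p)_b\in G_{a,a+b}\subset G_{0,t}$ whenever $a+b\le t$, so $H_{0,t}\subset G_{0,t}$ always holds. For the reverse inclusion, it suffices to work with $t$ finite first (the case $t=\infty$ then follows by taking closures of the union over finite $t$, using $G_{0,\infty}=\overline{\cup_t G_{0,t}}$). So fix a finite $t$ and suppose $g\in G_{0,t}$ is orthogonal to every $S_ac(p)_b$ with $a+b\le t$ and $p\in\cK$; I want to conclude $g=0$.

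First I would extract the consequences of this orthogonality. Taking $a=0$ and differentiating the cocycle relation, or more directly differentiating $b\mapsto \inpr{g}{c(p)_b}$ at $b$, gives (from $\frac{d}{db}c(p)_b = S_bp + S_bA^*\!\cdot$ after applying the generator — more carefully, $c(p)_b=\int_0^bS_sp\,ds+S_bA^*p-A^*p$, so $\frac{d}{db}c(p)_b=S_bp+AS_bA^*p=S_b(p+AA^*p)$, interpreting this on the appropriate domain) the relation $\inpr{g}{S_b(I+AA^*)A^*\!\text{-type vector}}$; the cleanest route is to integrate against $e^{-bz}$ and use the resolvent formulas on page for $\RE z$ large, exactly as in the proof of Lemma \ref{realadditive}. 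This converts the orthogonality conditions into: $(zI-A^*)^{-1}g \perp$ (a dense-enough family built from $\cK$ and resolvents of $A$), for all large $\RE z$. Then I would also use the $a$-direction: orthogonality to $S_ac(p)_b$ for varying $a$ with $a\le t-b$ means $S_a^*g\perp c(p)_b$, and since $\{S_a^*g\}_{a\ge 0}$ spans a translation-invariant family, combined with $g\in\Ker T_t^*=G_{0,t}$ and Lemma \ref{direct sum}, one should be able to push this to a statement valid for all $a\ge 0$, not just $a\le t-b$ — this is where the semigroup structure and the finite-time cutoff interact.

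The key algebraic input is Lemma \ref{graph}: the decomposition $D(-A^*)=D(B)\oplus\cK$ (graph-orthogonal) together with $B\subset -A^*$. The vectors $c(p)_b$ are designed precisely so that $T_b^*c(p)_b=0$, i.e. they live in $G_{0,b}$, using $A^*p$ for $p\in\cK$ — the ``new directions'' not seen by $B$. I expect the heart of the argument to be: if $g\in G_{0,t}$ is orthogonal to all these cocycle vectors, then via the resolvent identities $g$ must be orthogonal to a spanning set of $G_{0,t}$ itself, because $G_{0,t}=\Ker T_t^*$ and the defect between $D(A)$ and $D(-B^*)$ is exactly $\cK'$ (again Lemma \ref{graph}), which is matched to $\cK$ under $A^*$. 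Concretely, I would aim to show $\cG(-A^*)\cap\cG(B)^\perp$ "generates" $G_{0,t}$ in the sense that the resolvent-smeared versions $\int_0^t S_s(\text{that subspace})\,ds$ are total in $\Ker T_t^*$, which is a finite-dimensional-defect statement at each time.

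The main obstacle will be the passage from "orthogonal to $S_ac(p)_b$ only for $a+b\le t$" to a clean global/Laplace-transform statement: the finite-time constraint $a+b\le t$ prevents a naive application of the resolvent formulas (which integrate $a,b$ over $(0,\infty)$). I anticipate handling this by the standard trick of first proving the result for $t=\infty$ using the resolvent machinery of Lemma \ref{realadditive}'s proof, and then deducing finite $t$ from it: if $g\in G_{0,t}\subset G_{0,\infty}$ is orthogonal to $S_ac(p)_b$ for $a+b\le t$, one uses that $G_{0,t}$ sits inside $G_{0,\infty}$ compatibly with the $S_a$-action and the idempotent decomposition of Lemma \ref{direct sum} to reduce to the infinite-time statement restricted to $[0,t]$; alternatively, replace $S_a$ by $S_a$ composed with the cutoff projection onto $G_{0,t}$ and run the Laplace transform in $b$ alone while treating the $a$-span separately. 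Making this reduction airtight — ensuring the finite-time span is not strictly smaller — is the delicate point, and I would lean on the explicit structure $G_{a,b}=S_a(G_{0,b-a})$ and the cocycle relation $c(p)_{s+r}=c(p)_s+S_sc(p)_r$ to telescope finite-time spans correctly.
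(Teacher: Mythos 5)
Your outline matches the paper's strategy in several important respects: you correctly identify the Laplace-transform/resolvent technique, Lemma \ref{graph} (the graph-orthogonal decomposition of $D(-A^*)$ into $D(B)$ and $\cK$) and Lemma \ref{direct sum} as the key tools, and you ultimately settle on the right order of operations --- prove the $t=\infty$ case via resolvent identities, then deduce finite $t$ using the idempotent $I-S_tT_t^*$ together with the cocycle relation to telescope $S_ac(p)_b$ into the window $(0,t)$. Your observation that the Laplace transform converts orthogonality to $\{c(p)_b\}_b$ into $\inpr{(zI-A^*)^{-1}f}{p}_{A^*}=0$ for all $p\in\cK$, i.e.\ $(zI-A^*)^{-1}f\in D(B)$, is exactly the paper's first claim; passing to $S_s^*f$ to get $(zI-A^*)^{-1}S_s^*f\in D(B)$ for all $s\geq0$ is also correct.

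There is, however, a genuine gap in the middle of the $t=\infty$ argument. Having shown $h_s:=S_s^*(zI-A^*)^{-1}f\in D(B)$ for all $s$, you still need to conclude $f\in G_{0,\infty}^\perp$, and here your proposal becomes circular: you say the heart of the matter is to show that the resolvent-smeared versions of $\cG(-A^*)\cap\cG(B)^\perp$ are total in $\Ker T_t^*$ --- but that is essentially a restatement of the theorem, not an argument for it, and your aside about a ``finite-dimensional-defect statement'' is a red flag since $\cK$ need not be finite dimensional. The paper's actual step is a conservation argument: for $g\in D(B^*)$, the function $s\mapsto\inpr{h_s}{T_s^*g}$ is differentiable with derivative $\inpr{A^*h_s}{T_s^*g}+\inpr{h_s}{B^*T_s^*g}=\inpr{A^*h_s}{T_s^*g}+\inpr{Bh_s}{T_s^*g}=0$, the last equality using $B\subset -A^*$. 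This yields $T_sS_s^*(zI-A^*)^{-1}f=(zI-A^*)^{-1}f$ for all $s$, so $(zI-A^*)^{-1}f\in\Ran(T_s)=G_{0,s}^\perp$ for every $s>0$, hence $(zI-A^*)^{-1}f\in G_{0,\infty}^\perp$; finally one sends $z=x\to+\infty$ along the reals and uses $x(xI-A^*)^{-1}f\to f$ to conclude $f\in G_{0,\infty}^\perp$. Both the conservation step and the concluding limit are absent from your proposal, and without them the information gathered about $(zI-A^*)^{-1}f$ never gets transported back to $f$ itself. Your finite-$t$ reduction, by contrast, is essentially right: one verifies $(I-S_tT_t^*)G^0_{0,\infty}=G^0_{0,t}$ by the cocycle relation, and boundedness of the idempotent pushes density of $G^0_{0,\infty}$ in $G_{0,\infty}$ down to density of $G^0_{0,t}$ in $G_{0,t}$.
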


\begin{proof} To prove the statement for $t=\infty$, it suffices to show
$$\spa\{  S_sc(p)_t;\; s,t>0,\; p\in \cK  \}^\perp\subset {G_{0,\infty}}^{\perp}.$$  
First we claim that if $f\in \spa\{c(p)_t;\; t>0,\; p\in \cK  \}^\perp$, then $(zI-A^*)^{-1}f$ belongs to the domain of $B$  
for every $\RE z>C$. 
Indeed, 
\begin{eqnarray*}0
 &=&\int_0^\infty e^{-tz}\inpr{f}{c(p)_t}dt \\
 &=&\int_0^\infty e^{-tz}\int_0^t\inpr{S_s^*f}{p}dsdt+\int_0^\infty e^{-tz}\inpr{S_t^*f}{A^*p}dt-\frac{1}{z}\inpr{f}{A^*p}\\
 &=&\frac{1}{z}\inpr{(zI-A^*)^{-1}f}{p}+\inpr{(zI-A^*)^{-1}f}{A^*p}-\frac{1}{z}\inpr{f}{A^*p}\\
 &=&\frac{1}{z}\inpr{(zI-A^*)^{-1}f}{p}+\frac{1}{z}\inpr{A^*(zI-A^*)^{-1}f}{A^*p}\\
 &=&\frac{\inpr{(zI-A^*)^{-1}f}{p}_{A^*}}{z}.
\end{eqnarray*}
This shows that $(zI-A^*)^{-1}f\in D(A^*)$ is in the orthogonal complement of $\cK$ with respect to the inner product 
$\inpr{\cdot}{\cdot}_{A^*}$, which shows the claim. 

Assume $f\in \spa\{  S_sc(p)_t;\; s,t>0,\; p\in \cK  \}^\perp$. 
Then thanks to the above claim, we have $S_t^*(zI-A^*)^{-1}f=(zI-A^*)^{-1}S_t^*f\in D(B)$ for all 
$t\geq 0$ and $\RE z>C$. 
Let $g\in D(B^*)$. 
Then the function $t\mapsto \inpr{S_t^*(zI-A^*)^{-1}f}{T_t^*g}$ is differentiable and 
\begin{eqnarray*}
\frac{d}{dt}\inpr{S_t^*(zI-A^*)^{-1}f}{T_t^*g}&=&
\inpr{A^*S_t^*(zI-A^*)^{-1}f}{T_t^*g}+\inpr{S_t^*(zI-A^*)^{-1}f}{B^*T_t^*g}\\
 &=& \inpr{A^*S_t^*(zI-A^*)^{-1}f}{T_t^*g}+\inpr{BS_t^*(zI-A^*)^{-1}f}{T_t^*g}\\
 &=&0,
\end{eqnarray*}
where we use $B\subset -A^*$. 
Therefore we get $$\inpr{T_tS_t^*(zI-A^*)^{-1}f}{g}=\inpr{(zI-A^*)^{-1}f}{g}$$ 
for all $t\geq 0$ and $g\in D(B^*)$, and so $T_tS_t^*(zI-A^*)^{-1}f=(zI-A^*)^{-1}f$. 
Thanks to ${G_{0,t}}^\perp =\Ran(T_t)$, we get $(zI-A^*)^{-1}f\in {G_{0,\infty}}^\perp$ for all $\RE z>C$ . 
Since $A^*$ is the generator of the $C_0$-semigroup $\{S_t^*\}_{t>0}$, we conclude 
$$f=\lim_{x\to+\infty}x(xI-A^{*})^{-1}f\in {G_{0,\infty}}^\perp.$$
Therefore the statement for $t=\infty$ is proven. 

Recall from Lemma \ref{direct sum} that $G=G_{0,t}+\Ran(T_t)$ is a topological direct sum such that $I-S_tT_t^*$ 
is the projection onto the first component with respect to this direct sum decomposition. 
Let 
$$G^0_{a,b}=\spa\{  S_sc(p)_t;\; a\leq s,\; s+t\leq b,\; p\in \cK  \}.$$
Then on one hand we have already seen that $G^0_{0,\infty}$ is dense in $G_{0,\infty},$ and on the other hand we have 
$(I-S_tT_t^*)G^0_{0,\infty}=G^0_{0,t}$. 
Therefore $G^0_{0,t}$ is dense in $G_{0,t}$. 
\end{proof}

By switching the roles of $\{S_t\}_{t>0}$ and $\{T_t\}_{t>0}$, 
We get the following: 

\begin{cor}\label{totali} For $q\in \cK'$ and $t>0$, we set 
$$d(q)_t=\int_0^tT_sqds+T_tB^*q-B^*q.$$
Then $\{d(q)_t\}$ is an imaginary additive cocycle for $(\{S_t\},\{T_t\})$ and  
$$\overline{ \spa\{  T_ad(q)_b;\; a+b\leq t,\; p\in \cK  \} }=\Ker(S_t^*),$$ 
holds for all $t\in (0,\infty)$.  
\end{cor}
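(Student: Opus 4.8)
The plan is to observe that \emph{every} construction and auxiliary result in this subsection is invariant, after the obvious renaming, under the interchange $\{S_t\}\leftrightarrow\{T_t\}$, and then to apply Lemma \ref{realadditive} and Theorem \ref{totla} to the swapped pair.

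First I would check that $(\{T_t\},\{S_t\})$ is again a perturbation pair in the sense of Definition \ref{perturb}. Condition (2) is symmetric, since $T_t-S_t$ is Hilbert--Schmidt exactly when $S_t-T_t$ is; condition (1) follows by taking adjoints in the identity $T_t^*S_t=I$ of everywhere-defined bounded operators, which gives $S_t^*T_t=I$. Under the swap the generators $A$ of $\{S_t\}$ and $B$ of $\{T_t\}$ are exchanged, the relations $B\subset -A^*$ and $A\subset -B^*$ are unchanged, and the constant $C$ and the resolvent formulas for $A^*$ and $B^*$ are carried into one another. Hence the space $\cK$ --- the $\inpr{\cdot}{\cdot}_{A^*}$-orthogonal complement of $D(B)$ in $D(-A^*)$ --- becomes $\cK'$ and vice versa, so Lemma \ref{graph} is literally symmetric; and Lemma \ref{direct sum} applied to $(\{T_t\},\{S_t\})$ replaces $S_tT_t^*$ by $T_tS_t^*$ and identifies $\Ran(I-T_tS_t^*)=\Ker(T_tS_t^*)=\Ker(S_t^*)$, the subspace playing the role of $G_{0,t}$ for the swapped pair. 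Finally, by inspection of the definitions, a real additive cocycle for $(\{T_t\},\{S_t\})$ is precisely a family $\{d_t\}$ with $d_t\in\Ker(S_t^*)$ and $d_s+T_sd_t=d_{s+t}$, i.e.\ exactly an imaginary additive cocycle for $(\{S_t\},\{T_t\})$.

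With this dictionary the corollary is immediate. Applying Lemma \ref{realadditive} to $(\{T_t\},\{S_t\})$ with $q\in\cK'$, the cocycle \eqref{cocycle} becomes
$$\int_0^tT_sq\,ds+T_tB^*q-B^*q=d(q)_t,$$
since the correction term $S_tA^*p-A^*p$ turns into $T_tB^*q-B^*q$; this is a real additive cocycle for the swapped pair, hence an imaginary additive cocycle for $(\{S_t\},\{T_t\})$. Applying Theorem \ref{totla} to the swapped pair --- its proof, together with that of Lemma \ref{realadditive}, invokes only Lemma \ref{graph}, Lemma \ref{direct sum}, and the symmetric constant $C$, with $ {G_{0,t}}^\perp=\Ran(T_t)$ replaced by $\Ker(S_t^*)^\perp=\Ran(S_t)$ --- yields $\overline{\spa\{T_ad(q)_b;\ a+b\leq t,\ q\in\cK'\}}=\Ker(S_t^*)$ for all $t\in(0,\infty]$, which is the stated identity (with the index ranging over $\cK'$). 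I do not expect a genuine obstacle: the only point requiring care is confirming that the perturbation-pair axioms and the definitions of $\cK$, $\cK'$ and of the two kinds of additive cocycles are truly symmetric under the interchange, after which Theorem \ref{totla} transfers mechanically.
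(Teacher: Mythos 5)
Your proof is correct and is exactly the paper's approach: the corollary is dispatched there with the single remark ``by switching the roles of $\{S_t\}$ and $\{T_t\}$,'' and what you have written is the careful verification that this swap is legitimate (that $(\{T_t\},\{S_t\})$ is again a perturbation pair, that $\cK\leftrightarrow\cK'$, $G_{0,t}\leftrightarrow\Ker(S_t^*)$, real $\leftrightarrow$ imaginary additive cocycles, and that formula \eqref{cocycle} becomes the stated $d(q)_t$). No gap; you have, if anything, supplied detail the paper leaves implicit.
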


\begin{theorem} \label{pairing} 
Let the notation be as above. Then the following holds 
$$\inpr{c(p)_t}{d(q)_t}=-t(\inpr{p}{B^*q}+\inpr{A^*p}{q}),\quad \forall p\in \cK,\; \forall q\in \cK'.$$
In consequence, we have 
$$\inpr{c({p_1})_t}{d({-A^*p_2})_t}=t\inpr{p_1}{p_2}_{A^*},\quad \forall p_1,p_2\in \cK,$$
and the maps $\cK\ni p\mapsto c(p)$ and $\cK'\ni q \mapsto d(q)$ are injective.  
\end{theorem}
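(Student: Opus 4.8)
The plan is to reduce the identity to a single scalar using Proposition~\ref{hh}, and then to read that scalar off from the behaviour of $\inpr{c(p)_t}{d(q)_t}$ as $t\to 0^+$. By Lemma~\ref{realadditive} the family $\{c(p)_t\}$ is a real additive cocycle and by Corollary~\ref{totali} the family $\{d(q)_t\}$ is an imaginary additive cocycle for $(\{S_t\},\{T_t\})$, so Proposition~\ref{hh} gives $\inpr{c(p)_t}{d(q)_t}=\lambda\,t$ for all $t>0$, where $\lambda=\inpr{c(p)_1}{d(q)_1}$. Hence it suffices to show $\lambda=\lim_{t\to 0^+}t^{-1}\inpr{c(p)_t}{d(q)_t}=-\inpr{p}{B^*q}-\inpr{A^*p}{q}$.

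Next I would isolate the only term that contributes to this limit. Writing $c(p)_t=tp+(S_t-I)A^*p+r_t$ with $r_t=\int_0^t(S_s-I)p\,ds$, strong continuity of $\{S_t\}$ gives $\|r_t\|=o(t)$, and similarly $d(q)_t=tq+(T_t-I)B^*q+r'_t$ with $\|r'_t\|=o(t)$. Expanding $\inpr{c(p)_t}{d(q)_t}$ into nine terms and using that $(S_t-I)A^*p\to 0$ and $(T_t-I)B^*q\to 0$ strongly as $t\to 0^+$, every term except $\inpr{(S_t-I)A^*p}{(T_t-I)B^*q}$ is seen to be $o(t)$; so $\lambda=\lim_{t\to 0^+}t^{-1}\inpr{(S_t-I)A^*p}{(T_t-I)B^*q}$.

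The algebraic heart is the identity
$$\inpr{(S_t-I)A^*p}{(T_t-I)B^*q}=\inpr{(I-S_t)A^*p}{B^*q}+\inpr{(I-T_t^*)A^*p}{B^*q},$$
obtained by expanding the left-hand side and cancelling the cross term via $\inpr{S_tA^*p}{T_tB^*q}=\inpr{T_t^*S_tA^*p}{B^*q}=\inpr{A^*p}{B^*q}$, using $T_t^*S_t=I$. Now Lemma~\ref{graph} supplies the domain information: for $p\in\cK$ we have $A^*p\in\cK'\subseteq D(B^*)$ with $B^*A^*p=p$, and for $q\in\cK'$ we have $B^*q\in\cK\subseteq D(A^*)$ with $A^*B^*q=q$. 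Since $A^*$ generates $\{S_t^*\}$ and $B^*q\in D(A^*)$, pairing the first term onto the other side gives $t^{-1}\inpr{(I-S_t)A^*p}{B^*q}=\inpr{A^*p}{t^{-1}(I-S_t^*)B^*q}\to-\inpr{A^*p}{A^*B^*q}=-\inpr{A^*p}{q}$; since $B^*$ generates $\{T_t^*\}$ and $A^*p\in D(B^*)$, the second term gives $t^{-1}\inpr{(I-T_t^*)A^*p}{B^*q}\to-\inpr{B^*A^*p}{B^*q}=-\inpr{p}{B^*q}$. Adding the two limits yields $\lambda=-\inpr{p}{B^*q}-\inpr{A^*p}{q}$, which is the first assertion.

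The two consequences are then pure bookkeeping. Taking $q=-A^*p_2\in\cK'$ (permissible by Lemma~\ref{graph}) and using $B^*A^*p_2=p_2$ gives $\inpr{c(p_1)_t}{d(-A^*p_2)_t}=t\bigl(\inpr{p_1}{p_2}+\inpr{A^*p_1}{A^*p_2}\bigr)=t\inpr{p_1}{p_2}_{A^*}$. In particular $\inpr{c(p)_t}{d(-A^*p)_t}=t\|p\|_{A^*}^2$, so $c(p)=0$ forces $p=0$; and since $q\mapsto -B^*q$ is a bijection of $\cK'$ onto $\cK$ with $A^*(-B^*q)=-q$ (Lemma~\ref{graph}), every $q\in\cK'$ equals $-A^*p$ for $p=-B^*q$, which is nonzero when $q\ne 0$, so $d(q)=0$ forces $q=0$. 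I expect the only delicate point to be keeping track, in the two limits above, of which of the vectors $A^*p$, $B^*q$ lies in which generator's domain, so that $(I-S_t)$ and $(I-T_t^*)$ are always differentiated against an admissible vector; this is precisely what Lemma~\ref{graph} provides.
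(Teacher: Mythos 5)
Your proof is correct, but it takes a genuinely different route than the paper. The paper's proof of Theorem~\ref{pairing} never invokes Proposition~\ref{hh}: it directly simplifies $\inpr{c(p)_t}{d(q)_t}$ to $\int_0^t\inpr{c(p)_{t-s}}{q}ds-\inpr{c(p)_t}{B^*q}$ (via the cocycle relation, $T_s^*S_s=I$, and $c(p)_s\in\Ker T_s^*$), then computes the Laplace transform of $c(p)_t$ in resolvent form, manipulates the pairing into $z^2\int_0^\infty e^{-tz}\inpr{c(p)_t}{d(q)_t}\,dt=-\inpr{p}{B^*q}-\inpr{A^*p}{q}$ using Lemma~\ref{graph}, and applies inverse Laplace transform to read off the linear dependence on $t$ directly. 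You instead take the linearity in $t$ as known from Proposition~\ref{hh} and reduce the problem to identifying a single scalar by the short-time asymptotics: after peeling off all $o(t)$ contributions, you isolate the cross term $\inpr{(S_t-I)A^*p}{(T_t-I)B^*q}$, eliminate the $S$-$T$ cross piece by $T_t^*S_t=I$, and pass to the limit using that $B^*q\in\cK\subset D(A^*)$ and $A^*p\in\cK'\subset D(B^*)$ together with $A^*B^*q=q$, $B^*A^*p=p$ from Lemma~\ref{graph}. Your argument buys elementarity (no Laplace transform, no uniqueness of inverse transform) at the cost of relying on Proposition~\ref{hh}; the paper's argument is self-contained in that respect and yields the Laplace transform of $c(p)_t$ in closed resolvent form as a byproduct. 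Both hinge on exactly the same domain facts from Lemma~\ref{graph}. The derivation of the two consequences is standard in both.
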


\begin{proof} Direct computation shows 
\begin{eqnarray*}
\inpr{c(p)_t}{d(q)_t} &=&\int_0^t\inpr{c(p)_t}{T_sq}ds+\inpr{c(p)_t}{T_tB^*q}-\inpr{c(p)_t}{B^*q} \\
 &=& \int_0^t\inpr{c(p)_s+S_sc(p)_{t-s}}{T_sq}ds-\inpr{c(p)_t}{B^*q} \\
 &=&\int_0^t\inpr{c(p)_{t-s}}{q}ds-\inpr{c(p)_t}{B^*q}. 
\end{eqnarray*}
Thus for $\RE z>2C$, 
\begin{eqnarray*}
\int_0^\infty e^{-tz}\inpr{c(p)_t}{d(q)_t}dt
 &=& \int_0^\infty e^{-tz}\int_0^t\inpr{c(p)_{t-s}}{q}dsdt-\int_0^\infty e^{-tz}\inpr{c(p)_t}{B^*q}dt\\
 &=&\frac{1}{z}\int_0^\infty e^{-tz}\inpr{c(p)_{t}}{q}dt-\int_0^\infty e^{-tz}\inpr{c(p)_t}{B^*q}dt.
\end{eqnarray*}
On the other hand, we have 
\begin{eqnarray*}
\int_0^\infty e^{-tz}c(p)_tdt &=& \int_0^\infty e^{-tz}\int_0^t S_sp dsdt+\int_0^\infty e^{-tz}S_tA^*pdt
-\frac{1}{z}A^*p\\
 &=&\frac{1}{z}(zI-A)^{-1}p+(zI-A)^{-1}A^*p-\frac{1}{z}A^*p\\
 &=&\frac{1}{z}(zI-A)^{-1}p+\frac{1}{z}A(zI-A)^{-1}A^*p,
\end{eqnarray*}
and so 
\begin{eqnarray*}\lefteqn{
z^2\int_0^\infty e^{-tz} \inpr{c(p)_t}{d(q)_t}dt}\\ 
&=&\inpr{(zI-A)^{-1}p+A(zI-A)^{-1}A^*p}{q}-z\inpr{(zI-A)^{-1}p+A(zI-A)^{-1}A^*p}{B^*q}\\
 &=& \inpr{(zI-A)^{-1}p}{A^*B^*q}+\inpr{A(zI-A)^{-1}A^*p}{q}\\
 &+&-z\inpr{(zI-A)^{-1}p}{B^*q}-z\inpr{(zI-A)^{-1}A^*p}{A^*B^*q}\\
 &=& \inpr{A(zI-A)^{-1}p}{B^*q}+\inpr{A(zI-A)^{-1}A^*p}{q}\\
 &+&-z\inpr{(zI-A)^{-1}p}{B^*q}-z\inpr{(zI-A)^{-1}A^*p}{q}\\
& =&-\inpr{p}{B^*q}-\inpr{A^*p}{q}.
\end{eqnarray*}
Therefore the inverse Laplace transformation implies the statement. 
\end{proof}

\subsection{Divisibility}
\begin{theorem} \label{divisible} Every sum system is divisible. 
\end{theorem}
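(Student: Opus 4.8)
The plan is to transport the problem to the perturbation-pair picture of Subsections~\ref{stop}--\ref{ptos}, where Theorem~\ref{totla} and Corollary~\ref{totali} already supply the spanning statements we need. So let $(\{G_{a,b}\},\{S_t\})$ be a sum system, put $G=G_{0,\infty}$, and let $(\{S_t\},\{T_t\})$ be the associated perturbation pair (Proposition~\ref{e0}). A direct computation with $T_t=(A_t^*)^{-1}A_t^{-1}(S_t^*)^{-1}$ gives $\Ran(T_t)={G_{0,t}}^{\perp}$, hence $\Ker(T_t^*)=G_{0,t}$; since moreover $S_t^0$ and $S_t$ restrict to the same maps on the subspaces $G_{0,u}$, the sum system $(\{G_{a,b}\},\{S^0_t\})$ built from the pair in Subsection~\ref{ptos} has the same real and imaginary addits as $(\{G_{a,b}\},\{S_t\})$. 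Thus the real addits of $(\{G_{a,b}\},\{S_t\})$ are precisely the real additive cocycles of $(\{S_t\},\{T_t\})$, and by Lemma~\ref{im} its imaginary addits correspond bijectively to the imaginary additive cocycles. With this dictionary the real half of divisibility is immediate: for $p\in\cK$ the real additive cocycle $\{c(p)_t\}$ of Lemma~\ref{realadditive} is a real addit $\{x_t\}$ with $S_ac(p)_b=x_{a,a+b}$, and $(a,a+b)\subseteq(0,t)$ whenever $a+b\leq t$, so Theorem~\ref{totla} yields
\begin{eqnarray*}
G_{0,t}&=&\overline{\spa\{S_ac(p)_b;\ a+b\leq t,\ p\in\cK\}}\\
 &\subseteq&\overline{\spa_\R\{x_{s_1,s_2};\ (s_1,s_2)\subseteq(0,t),\ \{x_t\}\in\RAU\}}\ \subseteq\ G_{0,t}.
\end{eqnarray*}

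For the imaginary half, Lemma~\ref{im} shows that the map $\Lambda_t\colon\Ker(S_t^*)\to G_{0,t}$ sending $d$ to the $G_{0,t}$-component of ${A'_t}^*d$ is a bounded linear bijection with bounded inverse, and it carries each imaginary additive cocycle $\{d_t\}$ to its corresponding imaginary addit $\{y_t\}$. Fix $q\in\cK'$ and let $\{y(q)_t\}\in\IAU$ be the imaginary addit attached to the cocycle $\{d(q)_t\}$ of Corollary~\ref{totali}. The key step is the identity
$$\Lambda_t\bigl(T_{s_1}d(q)_{s_2-s_1}\bigr)={}^t y(q)^\prime_{s_1,s_2},\qquad (s_1,s_2)\subseteq(0,t).$$
Granting it, applying the topological isomorphism $\Lambda_t$ to the total set $\{T_ad(q)_b;\ a+b\leq t,\ q\in\cK'\}$ of Corollary~\ref{totali} shows that $G_{0,t}=\Lambda_t(\Ker(S_t^*))$ is the closed linear span of the vectors ${}^t y(q)^\prime_{s_1,s_2}$ over $(s_1,s_2)\subseteq(0,t)$ and $q\in\cK'$; since every $\{y(q)_t\}$ is an imaginary addit of the sum system, this span is contained in $\overline{\spa_\R\{{}^t y^\prime_{s_1,s_2};\ (s_1,s_2)\subseteq(0,t),\ \{y_t\}\in\IAU\}}\subseteq G_{0,t}$, and equality throughout gives the second divisibility identity.

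To prove the displayed identity, note first that $T_{s_1}d(q)_{s_2-s_1}\in\Ker(S_t^*)$: since $T_{s_1}^*S_t=S_{t-s_1}$ and $d(q)_{s_2-s_1}\in\Ker(S_{s_2-s_1}^*)$ with $s_2\leq t$, one has $\inpr{T_{s_1}d(q)_{s_2-s_1}}{S_tg}=\inpr{d(q)_{s_2-s_1}}{S_{t-s_1}g}=0$ for all $g$, so both sides of the identity are well-defined vectors of $G_{0,t}$. By the real half already proved, the vectors $x_{t_1,t_2}$ with $(t_1,t_2)\subseteq(0,t)$ and $\{x_t\}\in\RAU$ are total in $G_{0,t}$, so it suffices to match the inner products of the two sides with each such $x_{t_1,t_2}$: for the left side, $x_{t_1,t_2}\in G_{0,t}$ and the defining property of $A'_t$ (Lemma~\ref{im}) give $\inpr{x_{t_1,t_2}}{\Lambda_t(T_{s_1}d(q)_{s_2-s_1})}=\inpr{x_{t_1,t_2}}{T_{s_1}d(q)_{s_2-s_1}}$, which by Proposition~\ref{hh} and Lemma~\ref{im}(3) equals $\inpr{x_1}{y(q)_1}\,|(t_1,t_2)\cap(s_1,s_2)|$, while for the right side Proposition~\ref{h} gives the same value. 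This identity is the step I expect to be the main obstacle: it forces one to reconcile $\Lambda_t$, which emerges from Lemma~\ref{im} through the single gluing map $A'_t$, with the quite differently packaged passage $\{y_t\}\mapsto{}^t y^\prime_{s_1,s_2}$ written into the definition of divisibility, where $A_t$, $A'_t$ and $\tilde A$ all enter at once; the indirect inner-product argument just sketched sidesteps a head-on comparison of these maps, but it works only once the real half is in hand, so the order of the two halves is essential. A minor point is the opening claim that the compressions $S_t^0$, $T_t^0$ of Subsection~\ref{ptos} never really intervene, which is exactly what $\Ker(T_t^*)=G_{0,t}$ secures.
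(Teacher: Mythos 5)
Your argument is correct and follows the same basic route as the paper: pass to the associated perturbation pair, obtain the real half of divisibility directly from Theorem~\ref{totla}, and obtain the imaginary half from Corollary~\ref{totali} together with Lemma~\ref{im}. The two proofs differ only in how the imaginary half is finished. You explicitly introduce the topological isomorphism $\Lambda_t\colon\Ker(S_t^*)\to G_{0,t}$, prove the identity $\Lambda_t(T_{s_1}d(q)_{s_2-s_1})={}^{t}y(q)'_{s_1,s_2}$ by pairing against the already-total set of translated real addits (using Lemma~\ref{im}(3) together with Propositions~\ref{h} and~\ref{hh}), and then push the total set of Corollary~\ref{totali} through $\Lambda_t$; as you note, this forces the real half to be established before the imaginary one. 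The paper argues instead at the level of annihilators: it compresses essentially your identity into the single asserted inclusion that a vector of $G_{0,s}$ orthogonal to every ${}^{s}y'_{a,b}$ must lie in $\Ker(T_s^*)\cap\Ker(S_s^*)^\perp=G_{0,s}\cap\Ran(S_s)$, and then closes with the elementary observation $G_{0,s}\cap\Ran(S_s)=\{0\}$, which is already contained in Lemma~\ref{direct sum}. Your version is longer but makes explicit the comparison between the vectors ${}^{s}y'_{a,b}$ built from addits and the vectors $T_ad(q)_b$ built from cocycles, a step the paper leaves implicit; both are valid proofs.
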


\begin{proof} Let $(\{G_{a,b},\{S_t\}\})$ be a sum system and let $\{T_t\}$ be the $C_0$-semigroup 
constructed in subsection \ref{stop}. 
For $\{S_t\}$ and $\{T_t\}$, we use the same notation as in the previous subsection. 
Theorem \ref{totla} implies 
$$G_{0,s} =  \overline{\spa  \{x_{s_1,s_2}; (s_1,s_2) \subseteq (0,s), \{x_t\} \in\RAU\}}.$$
Lemma \ref{im} and Corollary \ref{totali} imply
$$G_{0,s}\cap\{^sy'_{a,b};\; y\in \IAU,\; 0<a,b<t\}^\perp \subset \Ker(T_s^*)\cap \Ker (S_s^*)^\perp.$$
Since $S_s$ has a left inverse $T_s^*$, the range of $S_s$ is closed, and we have $\Ker (S_s^*)^\perp=\Ran(S_s)$. 
Therefore to prove
$$G_{0,s} =  \overline{\spa_\R  \{{}^s y^\prime_{s_1,s_2}; (s_1,s_2) \subseteq(0,s), \{y_t\} \in \IAU\}},$$
it suffices to show $\Ker(T_t^*)\cap \Ran (S_t)=\{0\}$. 
Indeed, let $f\in \Ker(T_t^*)\cap \Ran (S_t)$. 
Then there exists $g\in G$ such that $f=S_tg$, and $T_t^*f=0$ on the other hand. 
Therefore $0=T_t^*S_tg=g$ and $f=0$. 
This proves the statement. 
\end{proof}

The above theorem together with Theorem \ref{IorIII} implies  

\begin{cor} \label{type} Every product system arising from a sum system is either of type I or type III. 
Every generalized CCR flow is either of type I or type III
\end{cor}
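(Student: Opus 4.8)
The plan is to deduce divisibility of an arbitrary sum system from the analysis of the associated perturbation pair carried out in the previous subsection. So let $(\{G_{a,b}\},\{S_t\})$ be a sum system, and let $\{T_t\}$ be the $C_0$-semigroup attached to it in subsection \ref{stop}, so that by Proposition \ref{e0} the pair $(\{S_t\},\{T_t\})$ is a perturbation pair whose associated sum system is the given one. The key point is that the real additive cocycles for $(\{S_t\},\{T_t\})$ are \emph{exactly} the real addits of the sum system, and that $G_{0,t}=\Ker(T_t^*)$; hence Theorem \ref{totla}, applied with the cocycles $c(p)_t$ built from $p\in\cK$, already gives the first density statement in the definition of divisibility, namely that $G_{0,s}$ is the closed real span of the vectors $x_{s_1,s_2}$ coming from real addits.

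Next I would treat the imaginary side, which is slightly less transparent because the imaginary additive cocycles for $(\{S_t\},\{T_t\})$ live in $\Ker(S_t^*)$ rather than in $G_{0,t}$, and they correspond to imaginary addits only after applying the isomorphism of Lemma \ref{im}. Concretely, by Lemma \ref{im} an imaginary addit $\{y_t\}$ for the sum system maps to the imaginary additive cocycle $\{({A'_t}^*)^{-1}(y_t\oplus 0)\}$, and part (3) of that lemma says this map is isometric on $G_{0,t}$ in the relevant pairing. Using this together with Corollary \ref{totali}, which asserts that the imaginary additive cocycles $d(q)_t$ for $q\in\cK'$ have closed span equal to $\Ker(S_t^*)$, I would show that a vector $f\in G_{0,s}$ orthogonal to all the ${}^sy'_{a,b}$ (as $\{y_t\}$ ranges over imaginary addits) must already be orthogonal to the entire image $\Ker(S_s^*)$ of the cocycles, i.e. $f\in\Ker(T_s^*)\cap\Ker(S_s^*)^{\perp}$.

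The last step is to see that this intersection is trivial. Since $T_s^*$ is a left inverse of $S_s$ (condition (1) in Definition \ref{perturb}), the range of $S_s$ is closed, so $\Ker(S_s^*)^{\perp}=\Ran(S_s)$; and if $f=S_sg$ with $T_s^*f=0$ then $0=T_s^*S_sg=g$, forcing $f=0$. Hence $G_{0,s}$ is the closed real span of the ${}^sy'_{s_1,s_2}$ coming from imaginary addits, and combined with the real-addit statement this is precisely divisibility. The main obstacle, or at least the only genuinely delicate point, is bookkeeping the two different ambient spaces on the imaginary side: one must be careful that the orthogonality of $f$ against every ${}^sy'_{a,b}$ really does transfer, via Lemma \ref{im}(3), to orthogonality against every $d(q)_t$-type cocycle vector, since those cocycles a priori take values in $\Ker(S_t^*)$ and not in $G_{0,t}$; everything else is a direct assembly of Theorem \ref{totla}, Corollary \ref{totali}, and Lemma \ref{im}.

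\begin{proof} Let $(\{G_{a,b},\{S_t\}\})$ be a sum system and let $\{T_t\}$ be the $C_0$-semigroup
constructed in subsection \ref{stop}.
For $\{S_t\}$ and $\{T_t\}$, we use the same notation as in the previous subsection.
Theorem \ref{totla} implies
$$G_{0,s} =  \overline{\spa  \{x_{s_1,s_2}; (s_1,s_2) \subseteq (0,s), \{x_t\} \in\RAU\}}.$$
Lemma \ref{im} and Corollary \ref{totali} imply
$$G_{0,s}\cap\{^sy'_{a,b};\; y\in \IAU,\; 0<a,b<t\}^\perp \subset \Ker(T_s^*)\cap \Ker (S_s^*)^\perp.$$
Since $S_s$ has a left inverse $T_s^*$, the range of $S_s$ is closed, and we have $\Ker (S_s^*)^\perp=\Ran(S_s)$.
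Therefore to prove
$$G_{0,s} =  \overline{\spa_\R  \{{}^s y^\prime_{s_1,s_2}; (s_1,s_2) \subseteq(0,s), \{y_t\} \in \IAU\}},$$
it suffices to show $\Ker(T_t^*)\cap \Ran (S_t)=\{0\}$.
Indeed, let $f\in \Ker(T_t^*)\cap \Ran (S_t)$.
Then there exists $g\in G$ such that $f=S_tg$, and $T_t^*f=0$ on the other hand.
Therefore $0=T_t^*S_tg=g$ and $f=0$.
This proves the statement.
\end{proof}
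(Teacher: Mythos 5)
Your proof is, almost verbatim, the paper's proof of Theorem \ref{divisible} (every sum system is divisible), which is the substantive ingredient but not what Corollary \ref{type} actually asserts. The corollary states the type I/III dichotomy for the resulting product system, and your argument never takes the final step: once divisibility is established, you must invoke Theorem \ref{IorIII}, the Bhat--Srinivasan result that a product system arising from a \emph{divisible} sum system is either type I or type III, to conclude. That one-line deduction is precisely the content of the corollary, and it does not appear anywhere in your write-up.

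The second sentence of the corollary, about generalized CCR flows, is also left unaddressed. It follows from the first sentence together with Proposition \ref{StTt}(3), which says the product system of any generalized CCR flow is isomorphic to one arising from a sum system; without citing this, the CCR-flow half of the statement is unproven. So while your divisibility argument is correct and follows exactly the paper's route (Theorem \ref{totla} for the real addits, Corollary \ref{totali} together with Lemma \ref{im} for the imaginary ones, reduction to $\Ker(T_t^*)\cap\Ran(S_t)=\{0\}$), as a proof of Corollary \ref{type} it stops short of the claimed conclusion on both counts.
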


In the rest of this section, we keep employing the same assumption and notation as in the proof Theorem \ref{divisible} and 
the previous subsection. 

For $x=\{x_t\}\in \RAU$ and $y=\{y_t\}\in IAU$, we set $b_G(x,y)=\inpr{x_1}{y_1}$. 
Then Proposition \ref{h}, Theorem \ref{pairing}, and Theorem \ref{divisible} show that $b_G$ is non-degenerate as a bilinear form $b_G:\RAU\times \IAU\rightarrow \R$.   

\begin{lemma}\label{dimension} 
Let the notation be as above. Then 
$$\dim \cK=\dim \cK'=\dim \RAU=\dim \IAU.$$
\end{lemma}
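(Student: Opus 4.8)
The statement asserts four dimension equalities; by symmetry (switching the roles of $\{S_t\}$ and $\{T_t\}$) it suffices to establish $\dim\cK=\dim\RAU$, and then the same argument applied to the pair $(\{T_t\},\{S_t\})$ gives $\dim\cK'=\dim\IAU$, while $\dim\cK=\dim\cK'$ is Lemma~\ref{graph}. So the plan is to produce a linear bijection between $\cK$ and $\RAU$ (or at least to prove the two inequalities $\dim\cK\leq\dim\RAU$ and $\dim\RAU\leq\dim\cK$).

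\textbf{Key steps.} First I would consider the map $\Phi:\cK\to\RAU$ sending $p\mapsto \{c(p)_t\}$, which by Lemma~\ref{realadditive} lands in $\RAU$ (after identifying real additive cocycles for $(\{S_t\},\{T_t\})$ with real addits for the associated sum system, as noted after Proposition~\ref{hh}), and which is injective by the last assertion of Theorem~\ref{pairing}. This immediately yields $\dim\cK\leq\dim\RAU$. For the reverse inequality, I would show that $\Phi$ is in fact \emph{onto}: given an arbitrary $\{x_t\}\in\RAU$, differentiating the cocycle relation $x_s+S_sx_t=x_{s+t}$ and using that $x_t\in G_{0,t}=\Ker T_t^*$ should identify $x_t$ with $c(p)_t$ for a suitable $p\in\cK$. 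Concretely, one expects $p$ to be recovered via a resolvent/Laplace-transform computation: taking the Laplace transform $\int_0^\infty e^{-tz}x_t\,dt$ for $\RE z>C$ and comparing with the formula for $\int_0^\infty e^{-tz}c(p)_t\,dt$ obtained in the proof of Theorem~\ref{pairing}, namely $\frac1z(zI-A)^{-1}p+\frac1z A(zI-A)^{-1}A^*p$, one reads off a candidate for $A^*p$ and hence for $p$; the membership $p\in\cK$ (orthogonality to $D(B)$ in $D(-A^*)$ with respect to $\inpr{\cdot}{\cdot}_{A^*}$) should follow exactly as in the ``claim'' inside the proof of Theorem~\ref{totla}, since that argument shows precisely that an element annihilating (in the appropriate sense) lies in $\cK$. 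An alternative, softer route: $b_G:\RAU\times\IAU\to\R$ is non-degenerate (stated just above), and via $\Phi$ together with the companion map $\Psi:\cK'\to\IAU$, $q\mapsto\{d(q)_t\}$, the pairing $b_G$ pulls back (up to the factor from Proposition~\ref{h} and Theorem~\ref{pairing}) to the pairing $\cK\times\cK'\to\R$, $(p,q)\mapsto -(\inpr{p}{B^*q}+\inpr{A^*p}{q})$, which is non-degenerate because $q\mapsto -A^*q$, i.e. the graph inner product, is non-degenerate on $\cK\times\cK'$. Non-degeneracy of $b_G$ on $\RAU\times\IAU$ together with the fact that $\Phi,\Psi$ are injective and their images already pair non-degenerately forces $\Phi,\Psi$ to be surjective, giving all four equalities at once.

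\textbf{Main obstacle.} The delicate point is surjectivity of $\Phi$: showing that \emph{every} real addit arises as $c(p)_t$ for some $p\in\cK$, and in particular verifying the domain condition $p\in\cK\subset D(-A^*)$ rather than just $p\in G$. The measurability-plus-cocycle hypotheses on $\{x_t\}$ give enough regularity (local integrability, hence a well-defined Laplace transform) but turning the resolvent identity into the assertion ``$A^*p$ exists and $p\perp D(B)$ in the graph inner product'' requires care — it is essentially a reprise of the resolvent argument in Theorem~\ref{totla}, read in the opposite direction. If that surjectivity proves awkward to push through directly, the non-degeneracy argument via $b_G$ sidesteps it, at the cost of invoking the companion statement for $\cK'$ and $\IAU$ simultaneously; I would fall back on that if needed.
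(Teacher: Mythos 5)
Your first two observations line up with the paper: $\dim\cK=\dim\cK'$ is Lemma~\ref{graph}, and injectivity of $p\mapsto c(p)$ (Theorem~\ref{pairing}) gives $\dim\cK\leq\dim\RAU$; likewise non-degeneracy of $b_G$ gives $\dim\RAU=\dim\IAU$. The issue is the closing step.

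Your primary route (prove $\Phi:p\mapsto c(p)$ is \emph{surjective} onto $\RAU$) is genuinely different from the paper's and is in fact a stronger claim than what the lemma asserts; you yourself flag it as delicate and do not carry it out. The paper does not attempt this. Your ``softer'' fallback, however, contains a real logical gap: from ``$\Phi$ and $\Psi$ are injective and their \emph{images} pair non-degenerately'' you cannot conclude that $\Phi,\Psi$ are surjective. (Take $V=W=\R^2$ with the standard pairing, $V_0=W_0=\R\times\{0\}$, $\Phi=\Psi:\R\to V$, $x\mapsto(x,0)$: everything you list holds, yet $\Phi$ is not onto.) What the paper actually uses, and what your proposal never invokes, is the \emph{totality} result, Theorem~\ref{totla}, combined with Proposition~\ref{h}: if $\{y_t\}\in\IAU$ satisfies $b_G(c(p),y)=0$ for all $p\in\cK$, then by Proposition~\ref{h} $y_t$ is orthogonal to the span of all $x(p)_{s_1,s_2}$, and Theorem~\ref{totla} says that span is dense in $G_{0,t}$, forcing $y=0$. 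This shows the pairing $\Phi(\cK)\times\IAU\to\R$ is non-degenerate in the second variable, so (in the finite-dimensional case which the paper explicitly reduces to) $\dim\IAU\leq\dim\cK$, closing the chain. You mention the ``claim'' inside the proof of Theorem~\ref{totla} only as a tool for verifying $p\in\cK$; you do not use the theorem's actual conclusion, which is the missing ingredient.
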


\begin{proof} We already know from Lemma \ref{graph}, Theorem \ref{pairing}, and non-degeneracy of $b_G$ that 
$$\dim \cK=\dim\cK'\leq \dim \RAU=\dim \IAU$$
holds. 
Assume that $\dim \cK$ is finite. If $\dim \IAU$ were strictly larger than $\dim \cK$, there would exist 
$\{y_t\}\in \IAU\setminus \{0\}$ such that $b_G(c(p),y)=0$ for all $p\in \cK$. 
However, this contradicts Proposition \ref{h} and Theorem \ref{totla}. 
Thus we get the statement.  
\end{proof}

The above lemma allows us to introduce the index of a sum system. 

\begin{definition}\label{index} For a sum system $(\{G_{a,b}\},\{S_t\})$, 
the index $\mathrm{ind}\: G$ is the number
$$\dim \cK=\dim \cK'=\dim \RAU=\dim \IAU.$$
\end{definition}

Now we discuss an appropriate topology of $\RAU$ and $\IAU$. 
For $p\in \cK$, we set $x(p)_t=c(p)_t$ and $y(p)_t\oplus 0=(A'_t)^*d(-A^*p)_t$. 
Then $x(p)\in \RAU$, $y(p)\in \IAU$, and  $b_G(x(p_1),y(p_2))=\inpr{p_1}{p_2}_{A^*}$ thanks to Lemma \ref{im} and Theorem \ref{pairing}.

\begin{lemma}\label{topology} For the linear space $\{x(p)\in \RAU;\; p\in \cK\}$, the following three topologies coincide: 
\begin{itemize} 
\item [$(1)$] the topology of uniform convergence on every compact subset of $[0,\infty)$, 
\item [$(2)$] the topology given by the metric $d_1((x(p_1),x(p_2))=\|x(p_1)_1-x(p_2)_1\|$,
\item [$(3)$] the topology given by the metric $d_2(x(p_1),x(p_2))=\|p_1-p_2\|_{A^*}$. 
\end{itemize}
For the linear space $\{y(p)\in \IAU;\; p\in \cK\}$, the following three topologies coincide: 
\begin{itemize} 
\item [$(1)$] the topology of uniform convergence on every compact subset of $[0,\infty)$, 
\item [$(2)$] the topology given by the metric $d_1((y(p_1),y(p_2))=\|y(p_1)_1-y(p_2)_1\|$,
\item [$(3)$] the topology given by the metric $d_2(y(p_1),y(p_2))=\|p_1-p_2\|_{A^*}$. 
\end{itemize}
\end{lemma}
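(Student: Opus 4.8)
The plan is to prove the three-way equivalence of topologies on $\{x(p);\ p\in\cK\}$ first, and then deduce the case of $\{y(p);\ p\in\cK\}$ either by a symmetric argument or by transporting through the pairing $b_G$ and Lemma \ref{im}. For the $x(p)$ case, the natural strategy is to show $(3)\Rightarrow(1)\Rightarrow(2)\Rightarrow(3)$, so that each implication is the ``easy direction'' of a comparison of norms.

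For $(3)\Rightarrow(1)$: given $p\in\cK$, look at the explicit formula \eqref{cocycle}, namely $c(p)_t=\int_0^t S_s p\,ds+S_tA^*p-A^*p$. On a fixed compact interval $[0,T]$ one has the uniform bound $\|S_s\|\le M_T$ for $s\in[0,T]$ (a $C_0$-semigroup is bounded on compacts), hence $\|c(p_1)_t-c(p_2)_t\|\le (t M_T)\|p_1-p_2\|+(M_T+1)\|A^*p_1-A^*p_2\|\le \big(TM_T+M_T+1\big)\,\|p_1-p_2\|_{A^*}$ uniformly in $t\in[0,T]$, where we used $\|p\|+\|A^*p\|\le \sqrt2\,\|p\|_{A^*}$. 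So $d_2$-convergence forces uniform convergence on compacts.

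For $(1)\Rightarrow(2)$: this is immediate, since $d_1(x(p_1),x(p_2))=\|c(p_1)_1-c(p_2)_1\|$ is the sup over the one-point compact set $\{1\}$ (or a sub-quantity of the sup over $[0,1]$), so convergence in $(1)$ trivially implies convergence in $(2)$. For $(2)\Rightarrow(3)$: here is where one uses Theorem \ref{pairing}, or rather its refinement. We know $b_G(x(p_1),y(p_2))=\inpr{p_1}{p_2}_{A^*}$, i.e. $\inpr{c(p_1)_1}{d(-A^*p_2)_1}=\inpr{p_1}{p_2}_{A^*}$; taking $p_2=p_1$ gives $\|c(p)_1\|\cdot\|d(-A^*p)_1\|\ge \|p\|_{A^*}^2$. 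One must therefore also control $\|d(-A^*p)_1\|$ from above by a constant times $\|p\|_{A^*}$: from the defining formula in Corollary \ref{totali}, $d(q)_t=\int_0^t T_s q\,ds+T_tB^*q-B^*q$ with $q=-A^*p$, and using $B^*A^*p=p$ (Lemma \ref{graph}) together with the bound $\|T_s\|\le M$ for $s\in[0,1]$, we get $\|d(-A^*p)_1\|\le M\|A^*p\|+(M+1)\|p\|\le C_0\|p\|_{A^*}$. Combining, $\|c(p)_1\|\ge \|p\|_{A^*}^2/\big(C_0\|p\|_{A^*}\big)=\|p\|_{A^*}/C_0$, so $\|p_1-p_2\|_{A^*}\le C_0\,\|c(p_1)_1-c(p_2)_1\|=C_0\,d_1(x(p_1),x(p_2))$ by linearity of $p\mapsto c(p)$. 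This closes the cycle, and the same chain of estimates, with the roles of $\{S_t\}$ and $\{T_t\}$ (equivalently $A$ and $B$) interchanged — noting that the graph norms $\|\cdot\|_{A^*}$ on $\cK$ and $\|\cdot\|_{B^*}$ on $\cK'$ are matched by the unitary $A^*|_\cK$ of Lemma \ref{graph}, so that $\|y(p)_1\| = \|d(-A^*p)_1\|$ and $\|p\|_{A^*}=\|{-A^*p}\|_{B^*}$ — handles $\{y(p);\ p\in\cK\}$.

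The main obstacle is the $(2)\Rightarrow(3)$ step: the lower bound $\|c(p)_1\|\gtrsim \|p\|_{A^*}$ is not a soft fact and genuinely requires the quantitative pairing identity of Theorem \ref{pairing} together with the companion upper bound on $\|d(-A^*p)_1\|$. Everything else is a routine semigroup estimate on the fixed compact interval $[0,1]$ using boundedness of $C_0$-semigroups on compacts and the identities $B^*A^* = I_\cK$, $A^*B^* = I_{\cK'}$ from Lemma \ref{graph}; one should just be slightly careful that all constants $M_T$, $M$, $C_0$ depend only on the semigroups and not on $p$, and that the passage between $d_1$ measured at time $1$ and the sup-norm on $[0,1]$ in the $(1)\Leftrightarrow(2)$ comparison is handled by the same uniform bounds.
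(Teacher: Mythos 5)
Your proposal is correct and takes essentially the same route as the paper: upper bounds on $\|x(p)_t\|$ and $\|y(p)_t\|$ from the explicit formulas and semigroup boundedness on compacts, combined with the pairing identity $\inpr{x(p)_t}{y(p)_t}=t\|p\|_{A^*}^2$ from Theorem~\ref{pairing} and Cauchy--Schwarz to get the matching lower bound. The paper states this as a single two-sided estimate $\tfrac{t}{C_t}\|p\|_{A^*}\le\|x(p)_t\|\le C_t\|p\|_{A^*}$ (and likewise for $y$) rather than chasing a cycle of implications, but the content is identical. One small imprecision to note: $\|y(p)_1\|=\|d(-A^*p)_1\|$ is not an equality, since $y(p)_t\oplus 0=(A'_t)^*d(-A^*p)_t$ with $A'_t$ only an equivalence operator, not a unitary; however the two norms are comparable, which is all the argument needs.
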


\begin{proof}
It is easy to show that there exists a positive constant $C_t$, increasing in $t>0$, such that 
$$\|x(p)_t\|\leq C_t\|p\|_{A^*},\quad \forall p\in \cK,$$
$$\|y(p)_t\|\leq C_t\|p\|_{A^*},\quad \forall p\in \cK.$$
Since  
$$t\|p\|^2_{A^*}=\inpr{x(p)_t}{y(p)_t}\leq \|x(p)_t\|\|y(p)_t\| \leq C_t\|p\|_{A^*}\|x(p)_t\|.$$
we get
$$\frac{t}{C_t}\|p\|_{A^*}\leq \|x(p)_t\|\leq C_t\|p\|_{A^*},$$
and in the same way, 
$$\frac{t}{C_t}\|p\|_{A^*}\leq \|y(p)_t\|\leq C_t\|p\|_{A^*}.$$
This proves the statement. 
\end{proof}

\begin{remark} The results obtained in this section substantially simplify arguments in \cite[Section 5]{IS} 
for addits in concrete examples, though the author could not find the explicit formula (\ref{cocycle}) without making 
the concrete computation in \cite[Section 5]{IS}. 
\end{remark}

\section{Type III criterion}

A type III criterion was obtained in \cite{IS} for a product system arising from a sum system of finite index. 
Now the description of additive cocycles in the previous section allows us to show the criterion in full generality. 

\begin{lemma}\label{Hilbert almost coboundary} Let $G$ be a real Hilbert space 
and let $(w,K)$ be a continuous unitary representation of $G$ (as an additive group) on a complex Hilbert space $K$ without containing 
the trivial representation. 
Let $c:G\rightarrow K$ be a continuous 1-cocycle, that is, the map $c$ satisfies the cocycle relation 
$$c(r+s)=c(r)+w(r)c(s),\quad \forall r,s\in G.$$
Then the following equation holds for all $r,s\in G$: 
$$\inpr{c(r)}{w(r)c(s)}=\inpr{c(s)}{w(s)c(r)}.$$
\end{lemma}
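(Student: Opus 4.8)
The plan is to exploit the cocycle relation in two different orders to compute the quantity $c(r+s+r)$ — or more symmetrically, to extract an antisymmetric ``curvature'' term from the cocycle identity and show it must vanish. First I would write out the cocycle relation for the pair $(r+s,\,r)$ and for $(r,\,s+r)$: since $G$ is abelian, both compute $c(r+s+r)$, giving
$$c(r)+w(r)c(s)+w(r+s)c(r)=c(r)+w(r)c(s+r)=c(r)+w(r)c(s)+w(r)w(s)c(r),$$
which collapses to the trivial identity $w(r+s)=w(r)w(s)$ and yields nothing. The genuinely useful move is instead to consider a \emph{bilinear} quantity. Define $\beta(r,s)=\inpr{c(r)}{w(r)c(s)}$; the goal is $\beta(r,s)=\beta(s,r)$. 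I would compute $\inpr{c(r+s)}{w(r+s)c(r+s)}=\|c(r+s)\|^2$ in two ways — but a cleaner route is to differentiate the cocycle relation, since everything in sight is continuous in the real-Hilbert-space variable.

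The key steps I would carry out, in order: (1) Observe that continuity of $w$ and $c$ lets us pass to the one-parameter subgroups $t\mapsto tr$ and $t\mapsto ts$ and work with the generators. By the continuity hypothesis, for each $r\in G$ the map $t\mapsto w(tr)$ is a $C_0$-one-parameter unitary group with a (possibly unbounded) skew-adjoint generator $i\lambda(r)$, and $t\mapsto c(tr)$ solves $\frac{d}{dt}c(tr)=w(tr)c'(0\cdot r)$-type relations coming from the cocycle identity $c((t+h)r)=c(tr)+w(tr)c(hr)$. (2) From the cocycle relation, deduce the ``integrated'' formula $c(r)=\int_0^1 w(sr)\,\dot c_r\,ds$ where $\dot c_r=\lim_{h\to 0}c(hr)/h$ exists because $h\mapsto c(hr)$ is additive-plus-correction and continuous, hence (being a continuous cocycle over $\R$) of the form $h\mapsto \int_0^h w(sr)\dot c_r\,ds$. (3) Now substitute a genuinely two-variable perturbation: replace $s$ by $s+\epsilon v$, expand both sides of the desired identity to first order in $\epsilon$, and use the cocycle relation $c(s+\epsilon v)=c(s)+w(s)c(\epsilon v)+o(\epsilon)$ together with $w(s+\epsilon v)=w(s)w(\epsilon v)$. (4) Matching the resulting first-order terms and using self-adjointness/skew-adjointness of the generators, the cross terms reorganize into exactly the asserted symmetry; the hypothesis that $(w,K)$ contains no trivial subrepresentation guarantees that the generator $\lambda(r)$ has no kernel issue that would obstruct solving for the symmetric part (equivalently, it is what makes the almost-coboundary / potential argument go through, so that $\beta$ is determined by a genuine inner product rather than a spurious constant).

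The main obstacle I expect is the unboundedness of the generators and the need to justify the differentiations: $w(tr)$ need not be norm-continuous, so $\dot c_r$ and $\lambda(r)$ live only on dense domains, and the interchange of limits/integrals in step (3)–(4) must be done on a common core. I would handle this either by mollifying (replacing $c(r)$ with $\int_0^\infty e^{-\mu t}w(tr)c(r)\,dt$-type averages, which land in the domain of the generator, proving the identity for the mollified cocycle and then letting $\mu\to 0$), or by the cleaner observation that the identity $\inpr{c(r)}{w(r)c(s)}=\inpr{c(s)}{w(s)c(r)}$ is \emph{bilinear and jointly continuous} in $(r,s)$, so it suffices to verify it for $r,s$ ranging over a dense subset where all the analytic-vector manipulations are legitimate — and the non-triviality hypothesis on $(w,K)$ is precisely what ensures such vectors are available and that the argument is not vacuous. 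I would present the proof via this mollification route as it keeps all computations inside bona fide domains of the generators and reduces the whole statement to the symmetry of a single real bilinear form, which then follows by the two-orderings computation applied to the regularized objects.
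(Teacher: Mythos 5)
Your proposal has a genuine gap at its crux. The claim that ``the hypothesis that $(w,K)$ contains no trivial subrepresentation guarantees that the generator $\lambda(r)$ has no kernel issue'' is false. The hypothesis only excludes vectors fixed by $w(g)$ \emph{for every $g\in G$ simultaneously}; for a single $r$ the one-parameter group $t\mapsto w(tr)$ can perfectly well have a large fixed-vector space, i.e.\ $\lambda(r)$ can have a nontrivial kernel, and then the almost-coboundary/potential argument along that one-parameter subgroup does \emph{not} go through. This is exactly the subtlety the paper's proof is organized around: it fixes $G_0=\spa\{r,s\}$, splits $K=K_0\oplus K_1$ with $K_0$ the $w(G_0)$-fixed vectors, shows the $K_0$-component $c_0$ of the cocycle vanishes on $G_0$ (this step uses the no-trivial-subrepresentation hypothesis on all of $G$, not just on $G_0$: from $c_0(g+h)=c_0(h+g)$ one gets $w(g)c_0(h)=c_0(h)$ for every $g\in G$, forcing $c_0(h)=0$), and only then invokes a finite-dimensional lemma for the $K_1$-part, where $w|_{G_0}$ genuinely has no invariant vectors. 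Your scheme never performs this decomposition and would break down precisely where $\Ker\lambda(r)\neq\{0\}$.

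Two further problems. First, the claimed representation $c(r)=\int_0^1 w(sr)\,\dot c_r\,ds$ with $\dot c_r=\lim_{h\to0}c(hr)/h$ is unjustified: a continuous Hilbert-space cocycle over $\R$ need not be differentiable at $0$ (consider a coboundary $(w(t)-I)\xi$ with $\xi$ only in the domain of a fractional power of the generator), and mollification alone does not produce such a vector in the limit. This is exactly the ``almost coboundary'' phenomenon the lemma's title refers to. Second, step (4) as stated is circular: expanding the target identity to first order in $\epsilon$ and observing that the expansion is internally consistent does not prove the identity; you would need an independent derivation of the first-order statement, which the proposal does not supply. A small observation that would have trimmed the problem: the real parts are automatically equal, since $\RE\inpr{c(r)}{w(r)c(s)}=\tfrac12\bigl(\|c(r+s)\|^2-\|c(r)\|^2-\|c(s)\|^2\bigr)$ by the cocycle relation and unitarity; only the imaginary part carries content and needs the no-trivial-subrepresentation hypothesis.
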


\begin{proof} Let $r,s\in G$ and let $G_0=\spa\{r,s\}$. 
Let 
$$K_0=\{f\in K;\; w(g)f=f,\;\forall g\in G_0\}$$
and let $K_1=K_0^\perp$. 
Then $K_0$ and $K_1$ are invariant under the unitary representation $w$. 
Let $c_0(g)$ and $c_1(g)$ be the projection of $c(g)$ onto $K_0$ and $K_1$ respectively. 
Then $c_0$ and $c_1$ are cocycles. 
Thanks to the cocycle relation, for every $g\in G$ and $h\in G_0$, we have 
$$c_0(g)+w(g)c_0(h)=c_0(g+h)=c_0(h)+w(h)c_0(g)=c_0(h)+c_0(g),$$
and we get $w(g)c_0(h)=c_0(h)$. 
Since $w$ does not contain the trivial representation, we have $c_0(h)=0$ for all $h\in G_0$. 
Now we can apply \cite[Lemma 4.7]{IS} to $c_1$. 
\end{proof}

Let $K$ be a complex Hilbert space. 
Recall that the automorphism group $G_K$ of the exponential product system of index $\dim K$ is described as 
follows (see \cite{Arv1}, \cite{Arv3}): 
Let $U(K)$ be the unitary group of $K$. 
Then $G_K$ is homeomorphic to $\R\times K\times U(K)$ with the group operation 
$$(a,\xi,u)\cdot(b,\eta,v)=(a+b+\IM\inpr{\xi}{u\eta},\xi+u\eta,uv). $$
For $(a,\xi,u)\in G_K$, the corresponding automorphism is realized by the family of unitary operators 
$e^{i at}W(1_{(0,t]}\xi)\EXP( 1_{(0,t]}u)$, $t>0$. 
The following is \cite[Lemma 4.8]{IS}. 

\begin{lemma}\label{gauge group} Let $G$ be an abelian group and let $\rho:G\ni r\mapsto (a(r),\xi(r),u(r))\in G_K$ be a map. 
Then $\rho$ is a homomorphism if and only if the following relation holds for every $r,s\in G$: 
\begin{equation}\label{g1}a(r+s)=a(r)+a(s)+\IM\inpr{\xi(r)}{u(r)\xi(s)},\end{equation}
\begin{equation}\label{g2} \xi(r+s)=\xi(r)+u(r)\xi(s),\end{equation}
\begin{equation}\label{g3} u(r+s)=u(r)u(s).\end{equation}
In particular, when $u(r)=1$ for all $r\in G$, then $\rho$ is a homomorphism if and only if 
\begin{equation}\label{g4} a(r+s)=a(r)+a(s),
\end{equation}
\begin{equation}\label{g5} \xi(r+s)=\xi(r)+\xi(s),
\end{equation}
\begin{equation}\label{g6} \IM \inpr{\xi(r)}{\xi(s)}=0.
\end{equation}
\end{lemma}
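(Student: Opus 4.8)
The plan is to unwind the statement into the defining relations of the group law on $G_K$ and then match them term by term with the given equations. Recall that an automorphism of the exponential product system corresponding to $(a,\xi,u)\in G_K$ is the family $t\mapsto e^{iat}W(1_{(0,t]}\xi)\EXP(1_{(0,t]}u)$, and that composition of two such families corresponds to the group operation $(a,\xi,u)\cdot(b,\eta,v)=(a+b+\IM\inpr{\xi}{u\eta},\xi+u\eta,uv)$. A map $\rho:G\to G_K$ is a homomorphism precisely when $\rho(r+s)=\rho(r)\cdot\rho(s)$ for all $r,s\in G$; writing $\rho(r)=(a(r),\xi(r),u(r))$ and spelling out the right-hand side using the displayed group law gives exactly the triple of scalar, vector, and unitary components $(a(r)+a(s)+\IM\inpr{\xi(r)}{u(r)\xi(s)},\ \xi(r)+u(r)\xi(s),\ u(r)u(s))$. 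So $\rho(r+s)=\rho(r)\cdot\rho(s)$ holds if and only if the three coordinate equations \eqref{g1}, \eqref{g2}, \eqref{g3} all hold, since a triple in $\R\times K\times U(K)$ is determined by its three coordinates.

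First I would make precise the identification of $G_K$ with $\R\times K\times U(K)$ and the cited group operation, referring to \cite{Arv1}, \cite{Arv3}, so that ``$\rho$ is a homomorphism'' literally means ``$\rho(r+s)=\rho(r)\cdot\rho(s)$ for all $r,s$''. Then I would compute $\rho(r)\cdot\rho(s)$ from the explicit formula: its first coordinate is $a(r)+a(s)+\IM\inpr{\xi(r)}{u(r)\xi(s)}$, its second is $\xi(r)+u(r)\xi(s)$, its third is $u(r)u(s)$. Equating this triple with $\rho(r+s)=(a(r+s),\xi(r+s),u(r+s))$ componentwise yields the equivalence with the system \eqref{g1}--\eqref{g3}; the ``only if'' direction is just reading off the coordinates of an equality in the product set, and the ``if'' direction is reassembling the three equalities into one.

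For the final assertion, I would specialize to the case $u(r)=1$ for all $r$. Then \eqref{g3} is automatic, \eqref{g2} becomes $\xi(r+s)=\xi(r)+\xi(s)$, which is \eqref{g5}, and \eqref{g1} becomes $a(r+s)=a(r)+a(s)+\IM\inpr{\xi(r)}{\xi(s)}$. The remaining point is to see that, under the constraint $u\equiv 1$ together with additivity of $\xi$, the single equation \eqref{g1} is equivalent to the pair \eqref{g4}, \eqref{g6}. Here one uses that $G$ is abelian: swapping $r$ and $s$ in \eqref{g1} and subtracting (using $a(r+s)=a(s+r)$ and $\xi(r+s)=\xi(s+r)$) gives $\IM\inpr{\xi(r)}{\xi(s)}=\IM\inpr{\xi(s)}{\xi(r)}=-\IM\inpr{\xi(r)}{\xi(s)}$, forcing \eqref{g6}; then \eqref{g1} collapses to \eqref{g4}. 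Conversely \eqref{g4} and \eqref{g6} trivially imply \eqref{g1}.

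The only mildly delicate step is this last equivalence, where commutativity of $G$ is what converts the cocycle identity for $a$ into genuine additivity plus the symplectic-orthogonality condition \eqref{g6}; everything else is bookkeeping with the coordinates of the semidirect-product structure of $G_K$. I expect no real obstacle, since the lemma is essentially a restatement of the group law, and the main care needed is simply to be explicit about which coordinate of $G_K$ each equation controls.
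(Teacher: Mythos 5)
The paper states this lemma without proof, citing \cite[Lemma~4.8]{IS}. Your argument is correct and is the expected one: the equivalence with \eqref{g1}--\eqref{g3} is just reading off the three coordinates of $\rho(r+s)=\rho(r)\cdot\rho(s)$ under the explicit group law, and the reduction of \eqref{g1} (with $u\equiv 1$) to the pair \eqref{g4}, \eqref{g6} is the standard symmetrization trick, using that $G$ is abelian so $a(r+s)=a(s+r)$, together with $\IM\inpr{\xi(s)}{\xi(r)}=-\IM\inpr{\xi(r)}{\xi(s)}$ to force \eqref{g6}. No gaps.
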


For a sum system $(\{G_{a,b}\}, \{S_t\})$, we denote by $\{T_t\}$ the $C_0$-semigroup constructed in subsection \ref{stop}. 
We use the same notation as in previous section such as $x(p)$, $y(p)$, and $\cK$. 
Let 
$$G^0_{0,t}=\spa  \{x(p)_{s_1,s_2}; (s_1,s_2) \subseteq
(0,t),\ p\in \cK\}\subseteq G_{0,t},$$ 
$${G^0_{0,t}}^\prime=\spa \{ {}^ty(p)^\prime_{s_1,s_2} ;(s_1,s_2) \subseteq(0,t),\;p\in \cK\}\subseteq G_{0,t}.$$ 
Then they are dense in $G_{0,t}$. 
For a subset $X\subset (0,\infty)$, we denote by $1_X$ the characteristic function of $X$.

\begin{theorem} \label{citerion} Let $(\{G_{a,b}\}, \{S_t\})$ be a sum system. 
Then the following conditions are equivalent. 
\begin{itemize}
\item [$(1)$] The product system $(H_t, U_{s,t})$ arising from $(\{G_{a,b}\}, \{S_t\})$ is of type I. 
\item [$(2)$] The sum system $(\{G_{a,b}\}, \{S_t\})$ is isomorphic to the shift sum system of index $\mathrm{ind}\; G$. 
More precisely, there exists an invertible operator $F$ from $\cK$ onto a real Hilbert space $K_\R$ such that the map 
$$G_{0,t}\ni x(p)_{a,b}\mapsto 1_{(a,b]}\otimes Fp\in L^2(0,t)_\R\otimes K_\R,\quad (a,b)\subset (0,t),\; p\in \cK.$$
extends to an isomorphism from the sum system $(\{G_{a,b}\}, \{S_t\})$ to the sum system 
$(\{L^2((a,b),K_\R)\},\{S'_t\})$, where $\{S'_t\}$ is the shift semigroup of $L^2((0,\infty),K_\R)$. 
\end{itemize}
\end{theorem}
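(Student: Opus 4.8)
The plan is to prove the equivalence in two directions, with the substantive content living in the implication $(1)\Rightarrow(2)$.

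The direction $(2)\Rightarrow(1)$ is essentially a recognition statement: if the sum system is isomorphic to the shift sum system of index $\mathrm{ind}\,G$, then by the remark in subsection \ref{stop} (isomorphic sum systems give isomorphic product systems) the product system $(H_t,U_{s,t})$ is isomorphic to the exponential product system of that index, which is of type I by Arveson \cite{Arv2}. So here I would simply invoke the functoriality already recorded in the excerpt and the known type of the exponential product system.

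For $(1)\Rightarrow(2)$ the strategy is to use the addits, the pairing, and the topological identifications from the previous section to manufacture the isomorphism explicitly. First, assume $(H_t,U_{s,t})$ is of type I. Type I means the units generate the product system; by Arveson's classification the automorphism group of $(H_t,U_{s,t})$ contains a copy of $G_K$ with $\dim K=\mathrm{ind}\,G$, and — via the correspondence recorded in the remarks of Section 3 — every real addit $x=\{x_t\}$ gives the automorphism $\{W(x_t)\}$ and every imaginary addit $y=\{y_t\}$ gives $\{W(iy_t)\}$. The key point is that for a type I product system the gauge group is abelian on the ``pure translation'' part, i.e.\ the relevant cocycles have trivial unitary component $u$, and then Lemma \ref{gauge group} (the $u\equiv 1$ case, equations \eqref{g4}--\eqref{g6}) forces the imaginary parts of all the relevant inner products to vanish: $\IM\inpr{x(p)_t}{x(p')_t}=0$, $\IM\inpr{y(p)_t}{y(p')_t}=0$, and — combining the real and imaginary addit families — the $K$-component of the combined cocycle must be ``planar'' in the precise sense that $\IM\inpr{x(p)_t+iy(q)_t}{x(p')_t+iy(q')_t}$ collapses to the bilinear pairing $b_G$. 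Concretely, I would let $F:\cK\to K_\R$ be an invertible operator with $\inpr{Fp}{Fp'}_{K_\R}=\inpr{p}{p'}_{A^*}$ (such an $F$ exists since $\cK$ with the graph inner product is a real Hilbert space and $K_\R$ can be taken to be that very space), and then check that the assignment $x(p)_{a,b}\mapsto 1_{(a,b]}\otimes Fp$ is \emph{isometric}: by Lemma \ref{topology} the norm $\|x(p)_t\|$ is comparable to $\sqrt{t}\,\|p\|_{A^*}$, but type I rigidity must upgrade this to an exact identity $\|x(p)_t\|=\sqrt{t}\,\|p\|_{A^*}$, matching $\|1_{(0,t]}\otimes Fp\|^2=t\|Fp\|_{K_\R}^2=t\|p\|_{A^*}^2$; similarly the cross terms $\inpr{x(p)_{a,b}}{x(p')_{a',b'}}$ must equal $|(a,b)\cap(a',b')|\,\inpr{p}{p'}_{A^*}$, matching the inner product of indicator-tensor vectors. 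Once isometry on the dense subspace $G^0_{0,t}$ is established, the map extends to a surjective isometry $U_t:G_{0,t}\to L^2((0,t),K_\R)$; verifying that $\{U_t\}$ intertwines the maps $A_{s,t}$ and $A'_{s,t}$ and the semigroups $S_t$ and $S'_t$ is then a direct consequence of how $x(p)_{a,b}$ and $1_{(a,b]}\otimes Fp$ transform under shift and restriction (both are ``cut and translate'' operations by construction of $x_{s,t}=S_s x_{t-s}$), so $\{U_t\}$ is the required sum system isomorphism, and by Lemma \ref{bdd} each $U_t$ lies in the equivalence class $\cS(G_{0,t},L^2((0,t),K_\R))$.

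The main obstacle I expect is the passage from the \emph{topological} comparability of Lemma \ref{topology} to the \emph{exact} isometry needed to land in the shift sum system — i.e.\ extracting from the type I hypothesis precisely that the $K$-valued part of the combined real/imaginary addit cocycle is flat (no ``off-white-noise'' distortion). This is where one must carefully convert ``units generate the product system'' into a statement about the structure of $\RAU\oplus i\,\IAU$ as a subspace of $G_{0,t}^\C$: the type I assumption should say that the complex span of $\{x(p)_t+iy(q)_t\}$, with its symplectic form, is symplectically isomorphic to $L^2((0,t),K)$ with its standard form, and then the pairing formula of Theorem \ref{pairing} together with Proposition \ref{h} pins down the isomorphism to be exactly the indicator-tensor map up to the choice of $F$. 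Making this rigorous — rather than merely asserting that type I forces the addit structure to be ``the flat one'' — is the heart of the argument, and is exactly the improvement over \cite{IS} that the manageable topology of Lemma \ref{topology} makes possible in full generality (not just finite index).
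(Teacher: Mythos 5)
There is a fundamental misreading of the target that invalidates the core of your $(1)\Rightarrow(2)$ argument. You aim to show that $x(p)_{a,b}\mapsto 1_{(a,b]}\otimes Fp$ is an \emph{isometry} and claim ``type I rigidity must upgrade'' the two-sided estimate of Lemma \ref{topology} to the exact identity $\|x(p)_t\|=\sqrt{t}\,\|p\|_{A^*}$. This is not what the theorem asserts and is not true in general. An isomorphism of sum systems in the sense of Definition \ref{sumsystem} is a family of \emph{equivalence operators} $U_t\in\cS(G_{0,t},G'_{0,t})$ (bounded invertible with $I-U_t^*U_t$ Hilbert--Schmidt), not unitaries, and correspondingly $F$ is only asserted to be invertible, not isometric. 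The paper never establishes an exact isometry; it shows the map extends to an equivalence operator. You have noticed yourself that the ``topological $\to$ exact isometry'' step is ``the heart of the argument'' and that you do not know how to carry it out: that is precisely the point at which the proposal breaks down, because the step you are looking for does not exist.

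The paper's actual proof takes a different and much more involved route, and you have skipped its main content. After invoking the isomorphism $V_t$ with the exponential system, one obtains two continuous homomorphisms $\rho,\sigma:\cK\to G_K$ with components $(a(p),\xi(p),u(p))$ and $(b(p),\eta(p),v(p))$. The first nontrivial step is to prove that the unitary components $u$ and $v$ are identically $1$; you assert this (``the gauge group is abelian on the pure translation part'') without proof, whereas the paper proves it by a contradiction argument using Lemma \ref{Hilbert almost coboundary}, the explicit action of $W(x(p)_{r,s})$ and $W(i\,{}^ty(p)'_{r,s})$ after conjugation by $V_t$, and irreducibility of the vacuum representation of the Weyl algebra. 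Second, one must show $\xi(\cK)+\eta(\cK)$ is dense in $K$ and that $K$ is the complexification of $\xi(\cK)$ (both again via irreducibility/weak-convergence arguments). Third --- and this is where the analytic heart lies --- one writes $\eta\xi^*f=Lf+if$ for a self-adjoint $L\in\B(K_\R)$ and proves $L=0$, in two stages: $L$ is compact (using Lemma \ref{bdd}, the equivalence operator $R$ it provides, and a weak-convergence argument with $p_n=\xi^{-1}f_n$, $q_n=\xi^*f_n$), and then $L$ has no nonzero eigenvalue (using the rescaled sequences $\tilde x_n=\sqrt{2^n}\,x(p)_{2^{-n}}$, $\tilde y_n=\sqrt{2^n}\,{}^ty(q)'_{2^{-n}}$, Proposition \ref{h}, Theorem \ref{pairing}, and Theorem \ref{divisible}). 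Only with $L=0$ does the restriction of $R$ to $G_{0,t}$ become the desired equivalence operator. None of this appears in your proposal; the $L=0$ argument is exactly the rigorous replacement for your heuristic ``type I forces the addit structure to be flat,'' and it is an equivalence-operator statement, not an isometry statement.

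The $(2)\Rightarrow(1)$ direction is fine and matches the paper (the paper just says ``trivial'').
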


\begin{proof} The implication from (2) to (1) is trivial. 

Assume that $(\{G_{a,b}\}, \{S_t\})$ is of type I. 
Then it is isomorphic to an exponential product system (see \cite{Arv1}, \cite{Arv3}). 
For $t >0$, let $$V_t:\Gamma(G_{0,t})\rightarrow \Gamma(L^2((0,t), K))$$ 
be a family of unitary operators, implementing the isomorphism between the 
above product systems.  Here $K$ is some separable complex Hilbert space.  

For each $p \in \cK$, the families $\{W(x(p)_t)\}_{t>0}$ and $\{W(iy(p)_t)\}_{t>0}$ form automorphisms for 
the product system  $(H_t, U_{s,t})$ (see \cite[Theorem 26]{BS}) satisfying the relations:
\begin{equation}\label{u1}
W(x(p_1)_t)W(x(p_2)_t)=W(x(p_1+p_2)_t),\quad \forall ~ p_1,p_2\in \cK,
\end{equation}
\begin{equation}\label{u2}
W(iy(p_1)_t)W(iy(p_2)_t)=W(iy(p_1+p_2)_t),\quad \forall~ p_1,p_2\in \cK, 
\end{equation}
\begin{equation}\label{u3}
W(x(p_1)_t)W(iy(p_2)_t)=e^{-2it\inpr{p_1}{p_2}}W(iy(p_2)_t)W(x(p_1)_t),\quad \forall~ p_1,p_2\in \cK.
\end{equation}
In the last equation we use $\inpr{x(p_1)_t}{y(p_2)_t}=t\inpr{p_1}{p_2}$. 
Therefore thanks to Lemma \ref{topology}, there exists two continuous homomorphisms 
$$\rho:\cK\ni p\mapsto (a(p),\xi(p),u(p))\in G_K,$$
$$\sigma:\cK\ni p\mapsto (b(p),\eta(p),v(p))\in G_K,$$
satisfying 
\begin{equation}\label{u4}
V_tW(x(p)_t)V_t^*=e^{ita(p)}W(1_{(0,t]}\xi(p))\EXP(1_{(0,t]}u(p)),\quad \forall p\in \cK,
\end{equation}
\begin{equation}\label{u5}
V_tW(iy(p)_t)V_t^*=e^{itb(p)}W(1_{(0,t]}\eta(p))\EXP(1_{(0,t]}v(p)),\quad \forall p\in \cK.
\end{equation}
Equation (\ref{u3}) implies that in addition to the relations in Lemma \ref{gauge group}, we have
\begin{equation}\label{u6}
2\inpr{p_1}{p_2}_{A^*} =\IM\inpr{\eta(p_2)}{v(p_2)\xi(p_1)}-\IM\inpr{\xi(p_1)}{u(p_1)\eta(p_2)},
\end{equation}
\begin{equation}\label{u7}\xi(p_1)+u(p_1)\eta(p_2)=\eta(p_2)+v(p_2)\xi(p_1),\end{equation}
\begin{equation}\label{u8} u(p_1)v(p_2)=v(p_2)u(p_1).\end{equation}

Let $w((p_1,p_2))=u(p_1)v(p_2)$ and $c((p_1,p_2))=\xi(p_1)+u(p_1)\eta(p_2)$. 
Then (\ref{g3}) and (\ref{u8}) imply that $(K,w)$ is a continuous unitary representation of $\cK^2$, and (\ref{g2}) and 
(\ref{u7}) imply that $c$ is a continuous $1$-cocycle. 
Let $$K_0=\{z\in K;\; w(g)z=z,\;\forall g\in \cK^2\},$$ 
and let $K_1=K_0^\perp$. 
Let $\xi_i(p)$ be the projection of $\xi(p)$ to $K_i$ and let $\eta_i(p)$ be the projection of $\eta(p)$ to $K_i$. 
Then Lemma \ref{Hilbert almost coboundary} implies 
\begin{equation}\label{u9}
\inpr{\xi_1(p_1)}{u(p_1)\eta_1(p_2)}=\inpr{\eta_1(p_2)}{v(p_2)\xi_1(p_1)}, 
\end{equation}
and Equation (\ref{u6}) is equivalent to 
\begin{equation}\label{u10}
\inpr{p_1}{p_2}_{A^*}=\IM\inpr{\eta_0(p_2)}{\xi_0(p_1)}. 
\end{equation}

We claim that $u$ and $v$ are trivial, that is, $K_1=\{0\}$.  
Assume that $K_1$ is not trivial. 
Let $0<r<s<t$. 
Then it is routine work to show  
$$V_tW(x(p)_{r,s})V_t^*=e^{i(s-r)a(p)}W(1_{(r,s]}\xi(p))\EXP(1_{(0,r]}+1_{(r,s]}u(p)+1_{(s,t]}),$$
$$V_tW(i{}^ty(p)_{r,s}')V_t^*=e^{i(s-r)b(p)}W(1_{(r,s]}\eta(p))\EXP(1_{(0,r]}+1_{(r,s]}v(p)+1_{(s,t]}).$$
By definition of $K_1$, there exists $p_0\in \cK$ such that either $u(p_0)$ or $v(p_0)$ is not trivial.  
Thus we assume that $u(p_0)\neq 1$ (the case with non-trivial $v(p_0)$ can be treated in the same way). 
Direct computation using Lemma \ref{Hilbert almost coboundary} and (\ref{g1}) shows that the operator 
$W(1_{(0,t]}\xi_1(p_0))\EXP(1_{(0,t]}u(p_0))$ commutes with $V_tW(x(p)_{r,s})V_t^*$ and $V_tW(i{}^ty(p)'_{r,s})V_t^*$ 
for all $p\in \cK$ and $0<r<s<t$. 
However, this contradicts the irreducibility of the vacuum representation of the Weyl algebra, since the sets 
$\{x(p)_{r,s}; (r,s) \subseteq (0,t),\;p\in \cK\}$ and $\{{}^ty(p)^\prime_{r,s}; (r,s) \subseteq (0,t),\; p\in \cK\}$ 
are total in $G_{0,t}$. 
Hence $K=K_0$. 

Next we claim that $\xi(\cK)+\eta(\cK)$ is dense in $K$. 
If this were not the case, there would exist a non-zero $\zeta\in K$ orthogonal to $\xi(\cK)$ and $\eta(\cK)$ with respect to 
the real inner product $\RE\inpr{\cdot}{\cdot}$. 
Again we can show that $W(i1_{(0,t]}\zeta)$ would commutes with $V_tW(x(p)_{r,s})V_t^*$ and $V_tW(i{}^ty(p)'_{r,s})V_t^*$, 
for all $p\in \cK$ and $0<r<s<t$, which is a contradiction. 
Therefore  we conclude that $\xi(\cK)+\eta(\cK)$ is dense in $K$. 

In the above argument, we have shown the following: there exist continuous homomorphisms (a priori as additive groups) 
$\xi:\cK\rightarrow K$, $\eta:\cK\rightarrow K$, $a:\cK\rightarrow \R$, and $b:\cK\rightarrow \R$ satisfying 
\begin{equation}\label{v1}
V_tW(x(p)_t)V_t^*=e^{ita(p)}W(1_{(0,t]}\xi(p)),\end{equation}
\begin{equation}\label{v2} V_tW(iy(p)_t)V_t^*=e^{itb(p)}W(1_{(0,t]}\eta(p)),\end{equation}
\begin{equation}\label{v3}
\IM\inpr{\xi(p_1)}{\xi(p_2)}=0,\quad \forall ~p_1,p_2\in \cK,
\end{equation}
\begin{equation}\label{v4}
\IM\inpr{\eta(y(p_1))}{\eta(y(p_2))}=0,\quad \forall ~p_1,p_2\in \cK,
\end{equation}
\begin{equation}\label{v5}
\inpr{p_1}{p_2}_{A^*}=\IM \inpr{\eta(p_2)}{\xi(p_1)},\quad \forall~ p_1,p_2\in \cK.  
\end{equation}
Since $\xi$, $\eta$, $a$, and $b$ are continuous, these maps are in fact linear. 

We can get rid of $a$ and $b$ in the above. 
Since $a$ and $b$ are bounded linear functionals, there exist $a_0,b_0\in \cK$ such that 
$a(p)=\inpr{p}{a_0}_{A^*}$ and $b(p)=\inpr{p}{b_0}_{A^*}$ for all $p\in \cK$. 
Let 
$$\zeta=\frac{1}{2}\xi(b_0)-\frac{1}{2}\eta(a_0).$$
Then direct computation yields $\IM \inpr{\zeta}{\xi(p)}=-a(p)/2$ and 
$\IM \inpr{\zeta}{\eta(p)}=-b(p)/2$ for all $p\in \cK$. 
By replacing $V_t$ with $W(1_{(0,t]}\zeta)V_t$, we may and do assume $a$ and $b$ are trivial. 

Equation (\ref{v5}) shows that $\xi$ and $\eta$ are injective and 
$$\|p\|_{A^*}^2=\IM \inpr{\eta(p)}{\xi(p)}\leq \|\eta(p)\| \|\xi(p)\|\leq \|\eta\|\|p\|_{A^*}\|\xi(p)\|,$$
which implies  $\|p\|_{A^*}\leq \|\eta\|\|\xi(p)\|$. 
In the same way we get $\|p\|_{A^*}\leq \|\xi\|\|\eta(p)\|$. 
In particular, the maps $\xi$ and $\eta$ are invertible from $\cK$ onto their images, which are closed. 
Thanks to Equation (\ref{v3}), the subspace $\xi(\cK)+i\xi(\cK)$ is a closed complex subspace of $K$. 
We claim $K=\xi(\cK)+i\xi(\cK)$. 
Assume $\zeta\in (\xi(\cK)+i\xi(\cK))^\perp$. 
Then since $\xi(\cK)+\eta(\cK)$ is dense in $\cK$, there exist two sequences $\{p_n\}$ and $\{q_n\}$ in $\cK$ such that 
$\{\xi(p_n)+\eta(q_n)\}$ converges to $\zeta$. 
For $p\in \cK$, we have 
$$\lim_{n\to \infty}\inpr{\xi(p_n)+\eta(q_n)}{\xi(p)}=0,\quad \forall p\in \cK.$$
This implies 
$$0=\lim_{n\to\infty}\IM \inpr{\xi(p_n)+\eta(q_n)}{\xi(p)}=\lim_{n\to\infty}\inpr{q_n}{p},$$
$$0=\lim_{n\to\infty}\RE \inpr{\xi(p_n)+\eta(q_n)}{\xi(p)}=\lim_{n\to\infty}(\inpr{\xi(p_n)}{\xi(p)}+\RE \inpr{\eta(q_n)}{\xi(p)}).$$
The first equation means that the sequence $\{q_n\}$ converges to 0 weakly, and in consequence $\{\eta(q_n)\}$ converges to 0 
weakly. 
Thus the second equation implies that the sequence $\{\xi(p_n)\}$ converges to 0 weakly as well, and so $\zeta=0$. 
This shows that $\xi(\cK)+i\xi(\cK)=K$ and we may identify $K$ with the complexification of $\xi(\cK)$. 
We denote $\xi(\cK)$ by $K_\R$. 

From now on, we regard $\xi$ as a invertible operator from $\cK$ onto the real Hilbert space $K_\R $. 
We claim that there exists a self-adjoint operator $L\in \B(K_\R )$, eventually shown to be 0, 
such that $\eta\xi^*(f)=Lf+if$ for all $f\in K_\R $. 
Since $K$ is the complexification of $K_\R $, there exist two operators $L,L'\in \B(K_\R )$ such that 
$\eta\xi^*f=Lf+iL'f$. 
For $f,g\in K_\R $, we have 
$$\IM \inpr{\eta\xi^* f}{g}=\IM\inpr{\eta\xi^*f}{\xi\xi^{-1}g}=\inpr{\xi^*f}{\xi^{-1}g}=\inpr{f}{g},$$
which shows $L'=1$. 
We also have 
$$0=\IM\inpr{\eta\xi^*f}{\eta\xi^*g}=\inpr{f}{Rg}-\inpr{Rf}{g},$$
which shows that $L$ is self-adjoint. 

Since $G^0_{0,t}+i{G^0}'_{0,t}$ is dense in $G_{0,t}^\C$, Lemma \ref{bdd} implies 
that there exists an equivalence operator $R\in \cS(G_{0,t}^\C,L^2((0,t),K))$, where $G_{0,t}^\C$ and $L^2((0,t),K)$ 
are regarded as real Hilbert spaces with real inner product $\RE \inpr{\cdot}{\cdot}$, such that for every $(r,s)\subset (0,t)$ and 
$p\in \cK$, 
$$Rx(p)_{r,s}=1_{(r,s]}\xi(p),$$
$$Ri{}^ty(p)'_{r,s}=1_{(r,s]}\eta(p).$$
We claim that $L$ is a compact operator first. 
Assume that it is not the case. 
Then there would exist a non-zero real number $\lambda$ and an orthonormal system $\{f_n\}$ in $K_\R $ such that 
$\{\|Lf_n-\lambda f_n\|\}$ converges to zero. 
We set $p_n=\xi^{-1}f_n$ and $q_n=\xi^*f_n$. 
Then $\{p_n\}$ and $\{q_n\}$ converges to 0 weakly. 
Since $\RE\inpr{x(p_n)_t}{iy(q_n)_t}=0$ and $R$ is an equivalence operator, we have 
$$\lim_{n\to\infty}\RE \inpr{Rx(p_n)_t}{Riy(q_n)_t}=0.$$
On the other hand, we have 
$$\RE \inpr{Rx(p_n)_t}{Riy(q_n)_t}=t\RE \inpr{\xi(p_n)}{\eta(q_n)}=t\RE\inpr{e_n}{Le_n}=t\lambda,$$
which is a contradiction. 

Now we show that $L=0$. 
Assume on the contrary that $L\neq 0$. 
Since $L$ is a compact self-adjoint operator, there would exist a non-zero real eigenvalue $\lambda$ with normalized 
eigenvector $f$. 
We set $p=\xi^{-1}f$ and $q=\xi^*f$. 
Let 
$$\tilde{x}_n=\sqrt{2^n}x(p)_{2^{-n}},$$
$$\tilde{y}_n=\sqrt{2^n}{}^ty(q)'_{2^{-n}}.$$
Since $\{R\tilde{x}_n\}$ and $\{Ri\tilde{y}_n\}$ are bounded, the two sequence $\{\tilde{x}_n\}$ and $\{\tilde{y}_n\}$ are bounded,   
and Proposition \ref{h}, Theorem \ref{pairing}, and Theorem \ref{divisible} show that they converge to zero weakly. 
Therefore 
$$\lim_{n\to \infty}\RE \inpr{R\tilde{x}_n}{Ri\tilde{y}_n}=0.$$
However, 
$$\RE \inpr{R\tilde{x}_n}{Ri\tilde{y}_n}=2^n\inpr{1_{(0,2^{-n}]}\otimes f}{1_{(0,2^{-n}]}\otimes Lf}=\lambda,$$
which is a contradiction. 
Therefore we get $L=0$. 
This shows that the restriction of $R$ to $G_{0,t}$ is an equivalence operator and 
finally we finishes the proof. 
\end{proof}

Let $R_t$ be the restriction of $R$ to $G_{0,t}$ in the above proof . 
Then $V_t=\Gamma(R_t)$ and 
$$Ri{}^ty(p)'_{r,s}=i(R_t^{*})^{-1}{}^ty(p)'_{r,s}.$$ 
For any positive invertible operator $D\in \B(\cK)$, we denote by 
$J^0(D)_{t}$ the linear extension of the map  
$$G^0_{0,t}\ni x(p)_{r,s}\mapsto {}^ty(Dp)'_{r,s}\in {G^0_{0,t}}'.$$
When $J^0(D)_{t,0}$ has a bounded extension to $G_{0,t}$, we denote it by $J(D)_t$. 
For $D=1$, we have 
$$i(R_t^*)^{-1}J^0(1)_tx(p)_{r,s}=1_{(r,s]}\otimes \eta(p)=1_{(r,s]}\otimes \eta\xi^*(\xi^*)^{-1}p
=i1_{(r,s]}\otimes (\xi^*)^{-1}p,$$
and so 
$$J^0(1)_{t,0}x(p)_{r,s}=R_t^*1_{(r,s]}\otimes (\xi^*)^{-1}p=R_t^*(1\otimes (\xi\xi^*)^{-1})R_tx(p)_{r,s}.$$
Therefore when the resulting product system is of type I, the operator $J^0(1)_t$ always has a bounded extension 
$J(1)_t=R_t^*(1\otimes (\xi\xi^*)^{-1})R_t$. 
A similar computation shows 
$J(\xi^*\xi)_t=R_t^*R_t$, which is an equivalence operator. 

\begin{theorem}\label{unitless} Let $(\{G_{a,b}\}, \{S_t\})$ be a 
sum system.  
Then the following statements are equivalent.
\begin{itemize}
\item[{\rm (1)}] The product system $(H_t, U_{s,t})$ arising from $(\{G_{a,b}\}, \{S_t\})$ is of type $I$.
\item[{\rm (2)}] There exists a positive invertible operator $D\in \B(\cK)$ such that 
the operator $J^0(D)_t$ extends to a bounded invertible operator $J(D)_t$ on $G_{0,t}$ for all $t>0$ 
such that $J(D)_t \in \cS(G_{0,t}, G_{0,t})$. 
\item[{\rm (3)}] There exists a positive invertible operator $D\in \B(\cK)$ such that 
the operator $J^0(D)_1$ extends to a bounded operator $J(D)_1$ on $G_{0,1}$ such that 
$J(D)_1 \in \cS(G_{0,1}, G_{0,1})$. 
\end{itemize}
\end{theorem}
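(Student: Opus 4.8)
The plan is to run the cycle of implications $(1)\Rightarrow(2)\Rightarrow(3)\Rightarrow(1)$, the last one carrying essentially all the weight. For $(1)\Rightarrow(2)$ I would take $D=\xi^{*}\xi$, where $\xi\colon\cK\to K_{\R}$ is the invertible operator produced in the proof of Theorem~\ref{citerion}. As recorded in the discussion preceding the present theorem, $J^{0}(\xi^{*}\xi)_{t}$ then has the bounded extension $J(\xi^{*}\xi)_{t}=R_{t}^{*}R_{t}$; since $R_{t}\in\cS(G_{0,t},L^{2}((0,t),K_{\R}))$ is an equivalence operator, $R_{t}^{*}R_{t}$ is positive and invertible on $G_{0,t}$ and $I-(R_{t}^{*}R_{t})^{2}=(I-R_{t}^{*}R_{t})(I+R_{t}^{*}R_{t})$ is Hilbert--Schmidt, so $J(\xi^{*}\xi)_{t}\in\cS(G_{0,t},G_{0,t})$ for every $t>0$; this gives (2), and $(2)\Rightarrow(3)$ is trivial.

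For $(3)\Rightarrow(1)$ I would fix a positive invertible $D\in\B(\cK)$ with $J(D)_{1}\in\cS(G_{0,1},G_{0,1})$ and let $\cK_{D}$ be $\cK$ equipped with the equivalent inner product $\inpr{p}{q}_{D}=\inpr{p}{Dq}_{A^{*}}$. Combining Proposition~\ref{h} with the identity $b_{G}(x(p_{1}),y(p_{2}))=\inpr{p_{1}}{p_{2}}_{A^{*}}$ recorded just before the theorem yields, for $(r,s),(r',s')\subseteq(0,1)$ and $p_{1},p_{2}\in\cK$,
$$\inpr{x(p_{1})_{r,s}}{J(D)_{1}x(p_{2})_{r',s'}}=\inpr{p_{1}}{Dp_{2}}_{A^{*}}\,|(r,s)\cap(r',s')|,$$
whence $J(D)_{1}$ is self-adjoint, positive and invertible. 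Hence the rule $x(p)_{r,s}\mapsto 1_{(r,s]}\otimes p$ defines a linear map on $G^{0}_{0,1}$ with square norm $\inpr{\cdot}{J(D)_{1}\cdot}$, so it extends to a bounded invertible operator $\Phi_{1}\colon G_{0,1}\to L^{2}((0,1),\cK_{D})$ with $\Phi_{1}^{*}\Phi_{1}=J(D)_{1}$, and since $I-J(D)_{1}=(I-J(D)_{1}^{2})(I+J(D)_{1})^{-1}$ is Hilbert--Schmidt, $\Phi_{1}$ is an equivalence operator. To reach all $t$, I would first take $t=n$ a positive integer: axioms (1) and (2) of a sum system provide equivalence operators $\bigoplus_{k=1}^{n}G_{k-1,k}\to G_{0,n}$ (iterated addition maps) and $G_{0,1}\to G_{k-1,k}$ (the restrictions $S_{k-1}|_{G_{0,1}}$), and composing their inverses with $\Phi_{1}$ on each summand and with the canonical unitary $\bigoplus_{k=1}^{n}L^{2}((0,1),\cK_{D})\cong L^{2}((0,n),\cK_{D})$ gives an equivalence operator $\Phi_{n}\colon G_{0,n}\to L^{2}((0,n),\cK_{D})$; the relations $x(p)_{a,c}=x(p)_{a,b}+x(p)_{b,c}$ and $S_{k-1}x(p)_{a,b}=x(p)_{a+k-1,b+k-1}$ then give $\Phi_{n}x(p)_{r,s}=1_{(r,s]}\otimes p$ for all $(r,s)\subseteq(0,n)$. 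For general $t>0$, pick an integer $n\geq t$; by Theorem~\ref{totla} the vectors $x(p)_{r,s}$ with $(r,s)\subseteq(0,t)$ are total in $G_{0,t}$ and their images are total in $L^{2}((0,t),\cK_{D})$, so $\Phi_{n}$ (being bounded below) maps $G_{0,t}$ onto $L^{2}((0,t),\cK_{D})$; writing the adjoint of the restriction as $P\Phi_{n}^{*}$, with $P$ the orthogonal projection of $G_{0,n}$ onto $G_{0,t}$, shows $\Psi_{t}:=\Phi_{n}|_{G_{0,t}}\in\cS(G_{0,t},L^{2}((0,t),\cK_{D}))$, independently of $n$.

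It then remains to check that $\{\Psi_{t}\}$ is an isomorphism of $(\{G_{a,b}\},\{S_{t}\})$ onto the shift sum system of index $\dim\cK_{D}=\mathrm{ind}\,G$: on the dense spanning vectors, for $(a,b)\subseteq(0,s)$ and $(c,d)\subseteq(0,t)$,
$$\Psi_{s+t}A_{s,t}\bigl(x(p)_{a,b}\oplus S_{s}x(q)_{c,d}\bigr)=1_{(a,b]}\otimes p+1_{(s+c,s+d]}\otimes q=A'_{s,t}\bigl(\Psi_{s}x(p)_{a,b}\oplus S'_{s}\Psi_{t}x(q)_{c,d}\bigr),$$
and by boundedness of all the maps involved this extends to $G_{0,s}\oplus G_{0,t}$. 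Thus condition (2) of Theorem~\ref{citerion} holds, with $K_{\R}=\cK_{D}$ and $F$ the identification $\cK\to\cK_{D}$, and that theorem delivers (1). I expect the main obstacle to be the bookkeeping in the passage from $t=1$ to all $t$: tracking $x(p)_{r,s}$ through the iterated direct-sum decomposition of $G_{0,n}$, and verifying that restricting the equivalence operator $\Phi_{n}$ to the subspace $G_{0,t}$ again produces an equivalence operator; neither is conceptually deep, but both need the closed-range and projection arguments to be carried out with care.
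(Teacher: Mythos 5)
Your argument is correct and fills a gap that the paper itself leaves implicit: for $(3)\Rightarrow(1)$ the paper simply refers to the proof of \cite[Theorem 4.9]{IS} (which handled the finite-index case), so a self-contained write-up is exactly what is needed here. The cycle you run is the natural one. For $(1)\Rightarrow(2)$ you correctly read off $D=\xi^*\xi$ and $J(\xi^*\xi)_t=R_t^*R_t\in\cS(G_{0,t},G_{0,t})$ from the discussion preceding the statement. For $(3)\Rightarrow(1)$ the key steps all hold: the formula $\inpr{x(p_1)_{r,s}}{J(D)_1x(p_2)_{r',s'}}=\inpr{p_1}{Dp_2}_{A^*}|(r,s)\cap(r',s')|$ follows from Proposition~\ref{h} and the identity $b_G(x(p_1),y(p_2))=\inpr{p_1}{p_2}_{A^*}$, and it simultaneously proves that $\Phi_1$ is well defined (the putative image of a zero combination has zero $L^2$-norm) and that $\Phi_1^*\Phi_1=J(D)_1$, whence $\Phi_1\in\cS(G_{0,1},L^2((0,1),\cK_D))$. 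The passage to $\Phi_n$ via the iterated $A_{s,t}$ and the shifts $S_{k-1}|_{G_{0,1}}$ is legitimate because $\cS$ is closed under composition and finite direct sums, the cocycle identities $x(p)_{a,c}=x(p)_{a,b}+x(p)_{b,c}$ and $S_{k-1}x(p)_{a,b}=x(p)_{a+k-1,b+k-1}$ hold, and the restriction $\Psi_t=\Phi_n|_{G_{0,t}}$ lands in $L^2((0,t),\cK_D)$ by totality (Theorem~\ref{totla}) plus the closed-range argument; the identity $(\Phi_n|_{G_{0,t}})^*=P\Phi_n^*$ then gives $I-\Psi_t^*\Psi_t=P(I-\Phi_n^*\Phi_n)|_{G_{0,t}}$, which is Hilbert--Schmidt. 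Finally the verification of the sum-system isomorphism relation on the spanning vectors, together with boundedness, places you in case $(2)$ of Theorem~\ref{citerion} with $K_\R=\cK_D$ and $F$ the identity, yielding $(1)$. This is almost certainly the argument of \cite[Theorem 4.9]{IS} that the paper invokes, now made to work at arbitrary index using Lemma~\ref{topology}; I see no gap.
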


\begin{proof} We have already seen that (1) implies (2). 
The implication from (2) to (3) is trivial. 
The proof of the implication from (3) to (1) is the same as the proof of \cite[Theorem 4.9]{IS}. 
\end{proof}

\begin{remark} Let the notation be as above. 
We set $F_{0,t}=\Ker(S_t^*)$ and $F_{s,t}=T_sF_{0,t-s}$. 
Then $(\{F_{a,b}\},\{T_t\})$ is also a sum system giving rise to a generalized CCR flows cocycle conjugate 
to that for $(\{G_{a,b}\},\{S_t\})$. 
When the index is one, Theorem \ref{unitless} shows that these two sum systems are isomorphic if and only if 
the resulting product system is of type I. 
This means that when the resulting product system is of type III, the two sum systems above 
are not isomorphic though they give cocycle conjugate generalized CCR flows. 
\end{remark}
\bigskip

\section{Perturbations of the shift}
Let $K_\R$ be a real Hilbert space, and let $\{S_t\}$ be the shift semigroup of $L^2((0,\infty),K_\R)$. 
When $\dim K_\R=1$, we gave a complete characterization of the perturbations $\{T_t\}$ of $\{S_t\}$ in terms of 
analytic functions on the right-half plane in \cite{I}, and the index of the pair $(\{S_t\},\{T_t\})$ is always 1. 
Here we take the first step to generalize our analysis to the case with non-trivial multiplicity space $K_\R$. 

\begin{theorem} Let $\{S_t\}$ be the shift semigroup of $L^2((0,\infty),K_\R)$. 
Then the index of any perturbation pair $(\{S_t\},\{T_t\})$ of $C_0$-semigroups is $\dim K_\R$. 
\end{theorem}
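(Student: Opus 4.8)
The plan is to pass to the complexification $G^\C=L^2((0,\infty),K)$ with $K=K_\R^\C$, make the generators explicit, and reduce the computation of the index $\dim\cK$ to a bookkeeping of subspaces of the Sobolev space $H^1:=H^1((0,\infty),K)$. It is classical that the generator of the shift semigroup $\{S_t\}$ is $A=-d/dx$ with $D(A)=H^1_0:=\{f\in H^1;\; f(0)=0\}$, and that $A^*=d/dx$ with $D(A^*)=H^1$ (no boundary condition). Hence $D(-A^*)=H^1$, the graph inner product $\inpr{\cdot}{\cdot}_{A^*}$ is the Sobolev inner product, and, since $B\subset -A^*$ by $T_t^*S_t=I$, $B$ is the restriction of $-d/dx$ to a subspace $D(B)\subseteq H^1$ which is closed for $\inpr{\cdot}{\cdot}_{A^*}$ (because $\|\cdot\|_B=\|\cdot\|_{A^*}$ on $D(B)$ and $B$ is closed). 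Therefore $\cK=H^1\ominus D(B)$ and $\dim\cK=\dim(H^1/D(B))$; by Lemma \ref{graph} this also equals $\dim\cK'$, so it suffices to show $\dim(H^1/D(B))=\dim_\C K=\dim K_\R$. For orientation, note $\dim(H^1/H^1_0)=\dim_\C K$ via the evaluation $f\mapsto f(0)$.

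The key point is that the difference of the two resolvents has an extremely small range. Fix $z$ with $\RE z>C$ (recall $C\geq 0$ since $\|S_t\|=1$, so $e^{-z\,\cdot}\in L^2(0,\infty)$). From $(zI-A)^{-1}f=\int_0^\infty e^{-tz}S_tf\,dt$ and $(zI-B)^{-1}f=\int_0^\infty e^{-tz}T_tf\,dt$, and writing $g=(zI-A)^{-1}f\in D(A)$, $h=(zI-B)^{-1}f\in D(B)$, both $g$ and $h$ satisfy $zu+u'=f$ (using $A=-d/dx$ and $B\subset -d/dx$). Subtracting, $\Psi_zf:=(zI-B)^{-1}f-(zI-A)^{-1}f=h-g\in H^1$ solves $u'=-zu$ on $(0,\infty)$, so $\Psi_zf=e^{-z\,\cdot}\otimes\beta(f)$, where $\beta:G^\C\to K$ is the bounded map $\beta(f)=(\Psi_zf)(0)=((zI-B)^{-1}f)(0)$ (the last equality because $(zI-A)^{-1}f\in D(A)$ vanishes at $0$). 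Thus $(zI-B)^{-1}f=(zI-A)^{-1}f+e^{-z\,\cdot}\otimes\beta(f)$ for all $f\in G^\C$.

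Now the count is immediate. Set $V:=\Ran\beta\subseteq K$. Evaluation at $0$ sends $D(B)=(zI-B)^{-1}(G^\C)$ onto $\{\beta(f);\;f\in G^\C\}=V$. Moreover $(zI-B)^{-1}f\in H^1_0$ iff $\beta(f)=0$, and then $(zI-B)^{-1}f=(zI-A)^{-1}f$; hence $D(B)\cap H^1_0=(zI-A)^{-1}(\Ker\beta)$, so $H^1_0/(D(B)\cap H^1_0)\cong G^\C/\Ker\beta\cong V$ (as $(zI-A)^{-1}$ is injective). Feeding $D(B)\subseteq H^1$ into the short exact sequence $0\to H^1_0\to H^1\to K\to 0$ given by evaluation at $0$ yields the exact sequence $0\to H^1_0/(D(B)\cap H^1_0)\to H^1/D(B)\to K/V\to 0$, whence $\dim(H^1/D(B))=\dim V+\dim(K/V)=\dim_\C K=\dim K_\R$. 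Therefore $\dim\cK=\dim K_\R$, i.e. the index of $(\{S_t\},\{T_t\})$ is $\dim K_\R$.

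The main obstacle is essentially bookkeeping: one must keep straight the subspaces $D(B)$, $H^1_0$, their intersection, and their images under evaluation at $0$, use $\dim V+\dim(K/V)=\dim K$ for a possibly non-closed $V$ (legitimate for vector-space dimensions), and justify the resolvent formula and the ODE manipulation for $\RE z>C$. The one genuinely structural input, beyond the identification of the generators, is the identity $(\Psi_zf)'=-z\,\Psi_zf$ forcing $\Ran\Psi_z$ into the space $\{e^{-z\,\cdot}\otimes v;\; v\in K\}$ of exponentials — and this is precisely where the hypothesis that $\{S_t\}$ is the shift semigroup (rather than a general $C_0$-semigroup) is used.
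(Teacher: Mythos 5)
Your proof is correct, and it follows a genuinely different route from the paper's. The paper introduces the functional $M(s):D(A^*)\to K_\R$ determined by $\inpr{\xi}{M(s)f}=\inpr{e^{-s\cdot}\xi}{f}_{A^*}$ and proves the two inequalities separately: if $\xi\perp M(s)(\cK)$ then $e^{-s\cdot}\xi$ is an eigenvector of $B$ with eigenvalue $s$ in the resolvent set, forcing $\xi=0$ and hence $\dim K_\R\leq\dim\cK$; conversely, if $p\in\cK$ with $M(s)p=0$, a Laplace-transform computation with the second resolvent formula yields $p''=p$, so $p(x)=e^{-x}\xi$, which then fails $M(s)p=0$, giving $\dim\cK\leq\dim K_\R$. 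You instead observe directly that $\Psi_z=(zI-B)^{-1}-(zI-A)^{-1}$ factors through $K$ via the boundary trace $\beta(f)=((zI-B)^{-1}f)(0)$, and then read off $\dim(H^1/D(B))=\dim K$ from the short exact sequence induced by evaluation at $0$. Both proofs rest on the same structural fact — that $A$ and $B$ are both restrictions of $-d/dx$, so the resolvent difference lands in the kernel of $d/dx+z$, which for $\RE z>0$ is exactly the exponentials — but your version computes the quotient dimension in one stroke rather than bracketing it by two separate contradiction arguments, and avoids the somewhat delicate second-order ODE step. Your approach is arguably cleaner and exposes the rank of the resolvent perturbation more transparently. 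One small point worth flagging: the short exact sequence computes vector-space (Hamel) dimensions, whereas $\dim\cK$ in the definition of the index is the Hilbert-space dimension of $\cK$ in the graph norm; in the separable setting this causes no trouble (both are either finite and equal, or both the space is infinite-dimensional with Hilbert dimension $\aleph_0$), but it deserves a word. The paper's argument also implicitly leans on separability in the same way, so this is not a defect relative to the original, merely a place to be explicit.
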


\begin{proof} Let $A$ and $B$ be the generators of $\{S_t\}$ and $\{T_t\}$ respectively. 
The operator $A$ is the differential operator $Af=-f'$ with the domain $D(A)$ consisting of all locally 
absolutely continuous $K_\R$-valued functions $f$ on $[0,\infty)$ such that $f'\in L^2((0,\infty),K_\R)$ and $f(0)=0$.  
The adjoint operator $A^*$ is the differential operator $Af=f'$ without boundary condition. 
Note that $B$ is a restriction of $-A^*$. 
We use the same notion as in Section 3. 
Recall that $\cK$ is the orthogonal complement of $D(B)$ in $D(A^*)$ with respect to graph inner product $\inpr{\cdot}{\cdot}_{A^*}$.  
By definition, all we have to show is $\dim \cK=\dim K_\R$. 

Let 
$$C=\lim_{t\to+\infty}\frac{\log\|T_t\|}{t}.$$
Then every real number $s>C$ belongs to the resolvent set of $B$. 
For such $s$, we introduce a linear map $M(s)$ from $D(A^*)$ to $K_\R$ by setting 
$$M(s)f=\int_0^\infty e^{-sx}f(x)dx-s\int_0^\infty e^{-sx}f'(x)dx.$$
Note that if $\xi\in K_\R$ is in the orthogonal complement of $\{M(s)p;\; p\in \cK\}$, 
then the function $e^{-sx}\xi$ belongs to the domain of $B$ and it is an eigenvector of $B$ with eigenvalue $s$. 
This implies $\dim K_\R \leq \dim \cK$. 
Indeed, if $\cK$ were finite dimensional and $\dim \cK$ were strictly smaller than $\dim K_\R$, 
every $s>C$ would belong the spectrum of $B$, which is a contradiction. 

Suppose now that $\dim \cK$ is strictly larger than $\dim K_\R$. 
We fix $s>C$.  
Then there would exist $p\in \cK\setminus \{0\}$ such that $M(s)p$=0. 
Take $f\in L^2((0,\infty),K_\R)$ and set $g=(sI-B)^{-1}f$. 
Solving the differential equation $g'+sg=f$, we get 
$$g(x)=e^{-sx}g(0)+\int_0^x e^{s(t-x)}f(t)dt. $$
Since $g\in D(B)$, we have 
\begin{eqnarray*}
0&=&\inpr{p}{g}+\inpr{p'}{g'}=\inpr{p}{g}+\inpr{p'}{f-sg}
=\inpr{p'}{f}+\inpr{p-sp'}{g}\\
&=&\inpr{p'}{f}+\int_0^\infty\inpr{p(x)-sp'(x)}{e^{-sx}g(0)+\int_0^x e^{s(t-x)}f(t)dt}dx\\
&=&\inpr{p'}{f}+\inpr{M(s)p}{g(0)}+\int_0^\infty \int_0^x e^{s(t-x)}\inpr{p(x)-sp'(x)}{f(t)}dtdx\\
&=&\inpr{p'}{f}+\int_0^\infty \int_0^\infty e^{-sy}\inpr{p(y+t)-sp'(y+t)}{f(t)}dydt\\
&=&\inpr{p'}{f}+\inpr{(sI-A^*)^{-1}(p-sp')}{f}.
\end{eqnarray*}
Since $f$ is arbitrary, we get $p'+(sI-A^*)^{-1}(p-sp')=0$. 
This implies $p'\in D(A^*)$ and $sp'-p''+p-sp'=0$, and so $p''=p$. 
Since $p\in L^2((0,\infty),K_\R)$ and $p\neq 0$, this is possible only if $p(x)=e^{-x}\xi$ for some $\xi\in K_\R\setminus\{0\}.$ 
However, this $p$ does not satisfy  $M(s)p=0$, which is a contradiction, and we conclude $\dim \cK=\dim K_\R$. 
\end{proof}


\thebibliography{99}



\bibitem{Ara3} H. Araki, 
\textit{On quasifree states of the canonical commutation relations. II.} 
Publ. Res. Inst. Math. Sci. \textbf{7} (1971/72), 121--152.

\bibitem{Arv1} W. Arveson, 
\textit{Continuous analogues of Fock spaces}, Mem. Americ. Math. Soc. 80(409):1-66, 1989. 

\bibitem{Arv2} W. Arveson, {\it Continuous analogues of Fock
spaces IV  Essential states,} Acta Math. 164 (3/4) 265-300, 1990. 
  
\bibitem{Arv3} W. Arveson, 
\textit{Non-commutative dynamics and $E$-semigroups}, Springer Monograph in Math. (Springer 2003).

\bibitem{BS} B. V. R. Bhat and R. Srinivasan, \textit{On product systems arising from sum systems} 
Infinite dimensional analysis and related topics, 
Vol. 8, Number 1, March 2005.

\bibitem{vD} A. van Daele,  
\textit{Quasi-equivalence of quasi-free states on the Weyl algebra.} 
Comm. Math. Phys. \textbf{21} (1971), 171--191.



\bibitem{I} M. Izumi, 
\textit{A perturbation problem for the shift semigroup.} Preprint math/0702439, 2007, 
to appear in J. Funct. Anal. 

\bibitem{IS} M. Izumi and R. Srinivasan, 
\textit{Generalized CCR flows.} Preprint math/0705.3280, 2007, to appear in Commun. Math. Phys. 


\bibitem{L} V. Liebscher, \textit{Random sets and invariants for
(type $II$) continuous product systems of Hilbert spaces},
Preprint math.PR/0306365.

\bibitem{Pa} K. R. Parthasarathy, \textit{An Introduction to Quantum
Stochastic Calculus}, Birkauser Basel, Boston, Berlin (1992). 

\bibitem{Po1} R. T. Powers, \textit{A nonspatial continuous semigroup of $*$-endomorphisms
of $B(H)$,} Publ. Res. Inst. Math. Sci.  23 (1987), 1053-1069.




\bibitem{Sk} M. Skeide, 
\textit{Existence of $E_0$-semigroups for Arveson systems: making two proofs into one.} 
Infin. Dimens. Anal. Quantum Probab. Relat. Top. \textbf{9} (2006), 373--378.

\bibitem{T1} B. Tsirelson, 
\textit{Non-isomorphic product systems.} 
Advances in Quantum Dynamics (South Hadley, MA, 2002), 273--328, 
Contemp. Math., 335, Amer. Math. Soc., Providence, RI, 2003. 


\bibitem{Y} K. Yosida,  
\textit{Functional Analysis.} 
Sixth edition. Springer-Verlag, Berlin-New York, 1980. 

\end{document}